\theoremstyle{plain}
\newtheorem{theorem}{Theorem}[section]
\newtheorem{claim}{Claim}[theorem]
\newtheorem{lemma}[theorem]{Lemma}
\newtheorem{prop}[theorem]{Proposition}
\theoremstyle{remark}
\newtheorem{rmk}[theorem]{Remark}
\theoremstyle{definition}
\newtheorem{defn}[theorem]{Definition}
\newtheorem{example}[theorem]{Example}
\numberwithin{equation}{section}
 \DeclareMathOperator{\id}{id} 
  \DeclareMathOperator{\Obj}{Obj}
\DeclareMathOperator{\Mor}{Mor}  
\DeclareMathOperator{\dom}{dom}  \DeclareMathOperator{\lsp}{span}
\DeclareMathOperator{\MCE}{MCE}  
 \DeclareMathOperator{\clsp}{{\overline{span}}}
\DeclareMathOperator{\qc}{{\square_{Claim}}}
\newcommand{\Lmin}{\Lambda^{\min}}
\newcommand{\ds}{\displaystyle}
\newcommand{\eps}{\varepsilon}
\newcommand{\wt}[1]{\widetilde{#1}}
\newcommand{\pc}{\renewcommand{\qedsymbol}{$\qc$}}
\newcommand{\field}[1]{\mathbb{#1}}
\newcommand{\CC}{\field{C}}
\newcommand{\NN}{\field{N}}
\newcommand{\TT}{\field{T}}
\newcommand{\Gg}{{\mathcal G}}
\newcommand{\Zz}{{\mathcal Z}}
\newcommand{\FE}{{\mathcal{FE}}}
\title{The path space of a higher-rank graph}
\author[S.B.G. Webster]{Samuel B.G. Webster}
\address{Samuel Webster\\
School of Mathematics and Applied Statistics\\
University of Wollongong\\
NSW 2522\\
AUSTRALIA}
\email{sbgwebster@gmail.com}
\subjclass[2010]{Primary 46L05}
\keywords{Graph algebra, k-graph, higher-rank graph}
\thanks{This research was supported by the ARC Discovery Project DP0984360.}
\date{29th March, 2011.}
\begin{document}
\begin{abstract}
We construct a locally compact Hausdorff topology on the path space of a finitely aligned $k$-graph $\Lambda$. We identify the boundary-path space $\partial\Lambda$ as the spectrum of a commutative $C^*$-subalgebra $D_\Lambda$ of $C^*(\Lambda)$. Then, using a construction similar to that of Farthing, we construct a finitely aligned $k$-graph $\wt\Lambda$ with no sources in which $\Lambda$ is embedded, and show that $\partial\Lambda$ is homeomorphic to a subset of $\partial\wt\Lambda$ . We show that when $\Lambda$ is row-finite, we can identify $C^*(\Lambda)$ with a full corner of $C^*(\wt\Lambda)$, and deduce that $D_\Lambda$ is isomorphic to a corner of $D_{\wt\Lambda}$. Lastly, we show that this isomorphism implements the homeomorphism between the boundary-path spaces.
\end{abstract}

\maketitle

\section{Introduction}
Cuntz and Krieger's work~\cite{CK1980} on $C^*$-algebras associated to $(0,1)$-matrices and the subsequent interpretation of Cuntz and Krieger's results by Enomoto and Watatani~\cite{EW1980} were the foundation of the field we now call graph algebras. Directed graphs and their higher-rank analogues provide an intuitive framework for the analysis of this broad class of $C^*$-algebras; there is an explicit relationship between the dynamics of a graph and various properties of its associated $C^*$-algebra. Kumjian and Pask in \cite{KP2000} introduced \emph{higher-rank graphs} (or \emph{$k$-graphs}) as analogues of directed graphs in order to study Robertson and Steger's higher-rank Cuntz-Krieger algebras \cite{RS1999a} using the techniques previously developed for directed graphs. Higher-rank graph $C^*$-algebras have received a great deal of attention in recent years, not least because they extend the already rich and tractable class of graph $C^*$-algebras to include all tensor products of graph $C^*$-algebras (and thus many Kirchberg algebras whose $K_1$ contains torsion elements \cite{KP2000}), as well as (up to Morita equivalence) the irrational rotation algebras and many other examples of simple A$\TT$-algebras with real rank zero \cite{PRRS2006}.

Although the definition of a $k$-graph (Definition~\ref{hrgdef}) isn't quite as straightforward as that of a directed graph, $k$-graphs are a natural generalisation of directed graphs: Kumjian and Pask show in  \cite[Example 1.3]{KP2000} that $1$-graphs are precisely the path-categories of  directed graphs. Like directed graph $C^*$-algebras, higher-rank graph $C^*$-algebras were first studied using groupoid techniques. Kumjian and Pask defined the $k$-graph $C^*$-algebra $C^*(\Lambda)$ to be the universal $C^*$-algebra for a set of Cuntz-Krieger relations among partial isometries associated to paths of the $k$-graph $\Lambda$. Using direct analysis, they proved a version of the gauge-invariant uniqueness theorem for $k$-graph algebras. They then constructed a groupoid $\Gg_\Lambda$ from each $k$-graph $\Lambda$, and used the gauge invariant uniqueness theorem to prove that the groupoid $C^*$-algebra $C^*(\Gg_\Lambda)$ is isomorphic to $C^*(\Lambda)$. This allowed them to make use of Renault's theory of groupoid $C^*$-algebras to analyse $C^*(\Lambda)$.  The unit space $\Gg_\Lambda^{(0)}$ of $\Gg_\Lambda$, which must be locally compact and Hausdorff, is a collection of paths in the graph: for a row-finite graph with no sources, $\Gg_\Lambda^{(0)}$ is the collection of infinite paths in $\Lambda$ (the definition of an infinite path in a $k$-graph is not straightforward, see Remark~\ref{paths are morphisms}). For more complicated graphs, the infinite paths are replaced with \emph{boundary paths} (Definition~\ref{defn boundary paths}).

In \cite{RSY2003}, Raeburn, Sims and Yeend developed a ``bare-hands'' analysis of $k$-graph $C^*$-algebras. They found a slightly weaker alternative to the no-sources hypothesis from Kumjian and Pask's theorems called \emph{local convexity} (Definition~\ref{defn: restrictions}). The same authors later introduced \emph{finitely aligned} $k$-graphs in \cite{RSY2004}, and gave a direct analysis of their $C^*$-algebras. This remains the most general class of $k$-graphs to which a $C^*$-algebra has been associated and studied in detail.

Many results for row-finite directed graphs with no sources can be extended to arbitrary graphs via a process called \emph{desingularisation}. Given an arbitrary directed graph $E$, Drinen and Tomforde show in \cite{DT2005} how to construct a row-finite directed graph $F$ with no sources by adding vertices and edges to $E$ in such a way that the $C^*$-algebra associated to $F$ contains the $C^*$-algebra associated to $E$ as a full corner. The modified graph $F$ is now called a \emph{Drinen-Tomforde desingularisation of $E$}. Although no analogue of a Drinen-Tomforde desingularisation is currently available for higher-rank graphs, Farthing provided a construction in \cite{Farthing2008} analogous to that in \cite{BPRS2000} for removing the sources in a locally convex, row-finite higher-rank graph. The statements of the results of \cite{Farthing2008} do not contain the local convexity hypothesis, but Farthing alerted us to an issue in the proof of \cite[Theorem 2.28]{Farthing2008} (see Remark~\ref{cindys error}), which arises when the graph is not locally convex.

The goal of this paper is to explore the path spaces of higher-rank graphs and investigate how these path spaces interact with desingularisation procedures such as Farthing's.

In Section~\ref{hrg sec}, we recall the definitions and standard notation for higher-rank graphs. In Section~\ref{hrg top}, following the approach of \cite{PW2005}, we build a topology on the path space of a higher-rank graph, and show that the path space is locally compact and Hausdorff under this topology.

In Section~\ref{hrg removing sources}, given a finitely aligned $k$-graph $\Lambda$, we construct a $k$-graph $\wt\Lambda$ with no sources which contains a subgraph isomorphic to $\Lambda$. Our construction is modelled on Farthing's construction in \cite{Farthing2008}, and the reader is directed to \cite{Farthing2008} for several proofs. The crucial difference is that our construction involves extending elements of the boundary-path space $\partial\Lambda$, whereas Farthing extends paths from a different set $\Lambda^{\leq\infty}$ (see Remark~\ref{boudary path remark}). Interestingly, although $\partial\Lambda$ and $\Lambda^{\leq\infty}$ are potentially different when $\Lambda$ is row-finite and not locally convex (Proposition~\ref{leqsubsetpartial}), our construction and Farthing's yield isomorphic $k$-graphs except in the non-row-finite case (Examples~\ref{desource ex} and Proposition~\ref{me and cindy agree when the hrg is rf and lc}). We follow Robertson and Sims' notational refinement \cite{RS2009} of Farthing's desourcification: we construct a new $k$-graph in which the original $k$-graph is embedded, whereas Farthing's construction adds bits onto the existing $k$-graph. This simplifies many arguments involving $\wt\Lambda$; however, the main reason for modifying Farthing's construction is that $\Lambda^{\leq \infty}$ is not as well behaved topologically as $\partial\Lambda$ (see Remark~\ref{lambda leq infty not closed}) and in particular, no analogue of Theorem~\ref{hrg path spaces homeo} holds for Farthing's construction.

In Section~\ref{hrg path spaces under desource}, we prove that given a row-finite $k$-graph $\Lambda$, there is a natural homeomorphism from the boundary-path space of $\Lambda$ onto the space of infinite paths in $\wt \Lambda$ with range in the embedded copy of $\Lambda$. We provide examples and discussion showing that the topological basis constructed in Section~\ref{hrg top} is the one we want.

In Section~\ref{hrg alg} we recall the definition of the Cuntz-Krieger algebra $C^*(\Lambda)$ of a higher-rank graph $\Lambda$. We show that if $\Lambda$ is a row-finite $k$-graph and $\wt \Lambda$ is the graph with no sources obtained by applying the construction of Section~\ref{hrg removing sources} to $\Lambda$, then the embedding of $\Lambda$ in $\wt\Lambda$ induces an isomorphism $\pi$ of $C^*(\Lambda)$ onto a full corner of $C^*(\wt\Lambda)$.

Section~\ref{hrg diag} contains results about the diagonal $C^*$-subalgebra of a $k$-graph $C^*$-algebra: the $C^*$-algebra generated by range projections associated to paths in the $k$-graph. We identify the boundary-path space of a finitely aligned higher-rank graph with the spectrum of its diagonal $C^*$-algebra. We then show that the isomorphism $\pi$ of Section~\ref{hrg alg} restricts to an isomorphism of diagonals which implements the homeomorphism of Section~\ref{hrg path spaces under desource}.

\subsection*{Acknowledgements}
The work contained in this paper is from the author's PhD thesis, and as such I extend thanks to my PhD supervisors Iain Raeburn and Aidan Sims for their support and willingness to proofread and guide my work.

\section{Preliminaries}\label{hrg sec}

\begin{defn}\label{hrgdef}
Given $k \in \NN$, a $k$-graph is a pair $(\Lambda, d)$ consisting of a
countable category $\Lambda = (\Obj(\Lambda), \Mor(\Lambda), r , s)$ together with a functor $d: \Lambda
\to \NN^k$, called the \emph{degree map}, which satisfies the \emph{factorisation property}: for every $\lambda \in \Mor(\Lambda)$ and $m,n \in \NN^k$ with $d(\lambda) = m + n$, there are unique elements $\mu,\nu \in \Mor(\Lambda)$ such that $\lambda = \mu \nu$, $d(\mu) = m$ and $d(\nu) = n$. Elements $\lambda \in \Mor(\Lambda)$ are called \emph{paths}. We follow the usual abuse of notation and write $\lambda \in \Lambda$ to mean $\lambda \in \Mor(\Lambda)$. For $m \in \NN^k$ we define $\Lambda^m := \{ \lambda \in \Lambda : d(\lambda) = m \}$. For subsets $F \subset \Lambda$ and $V \subset \Obj(\Lambda)$, we write $VF := \{ \lambda \in F: r(\lambda) \in V \}$ and $FV := \{ \lambda \in F : s(\lambda) \in V\}$. If $V = \{v\}$, we drop the braces and write $vF$ and $Fv$. A morphism between two $k$-graphs $(\Lambda_1,d_1)$ and $(\Lambda_2,d_2)$ is a functor $f: \Lambda_1 \to \Lambda_2$ which respects the degree maps. The factorisation property allows us to identify $\Obj(\Lambda)$ with $\Lambda^0$. We refer to elements of $\Lambda^0$ as \emph{vertices}.
\end{defn}

\begin{rmk}\label{rmk 1skel}
To visualise a $k$-graph we draw its \emph{$1$-skeleton}: a directed graph with vertices $\Lambda^0$ and edges $\bigcup_{i=1}^k \Lambda^{e_i}$. To each edge we assign a colour determined by the edge's degree. We tend to use $2$-graphs for examples, and we draw edges of degree $(1,0)$ as solid lines, and edges of degree $(0,1)$ as dashed lines.
\end{rmk}

\begin{example}
For $k \in \NN$ and $m \in (\NN\cup\{\infty\})^k$, we define $k$-graphs $\Omega_{k,m}$ as follows. Set $\Obj(\Omega_{k,m}) = \{p \in \NN^k : p_i \leq m_i \text{ for all }i \leq k\}$,
\[
    \Mor(\Omega_{k,m}) = \{(p,q): p,q \in \Obj(\Omega_{k,m})\text{ and } p_i \leq q_i \text{ for all }i \leq k\},
\]
$r(p,q) = p$, $s(p,q) = q$ and $d(p,q) = q-p$, with composition given by $(p,q)(q,t) = (p,t)$. If $m = (\infty)^k$, we drop $m$ from the subscript and write $\Omega_k$. The $1$-skeleton of $\Omega_{2,2}$ is depicted in Figure~\ref{omega2}.
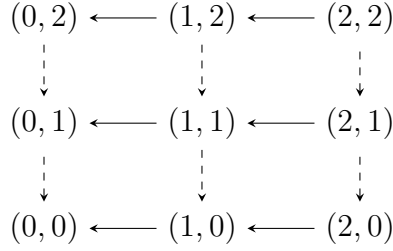
\begin{figure}
\begin{tikzpicture}[>=stealth,scale=0.7]
    \node (u) at (0,2) {$(0,1)$};
    \node (v) at (0,0) {$(0,0)$}
        edge[<-,dashed] node[auto] {} (u);
    \node (w) at (3,0) {$(1,0)$}
        edge[->] node[auto] {} (v);
    \node (x) at (3,2) {$(1,1)$}
        edge[->,dashed] node[auto] {} (w)
        edge[->] node[auto] {} (u);
    \node (a) at (6,4) {$(2,2)$};
    \node (b) at (6,2) {$(2,1)$}
    edge[<-,dashed] node[auto] {} (a)
    edge[->] node[auto] {} (x);
    \node (c) at (3,4) {$(1,2)$}
    edge[<-] node[auto] {} (a)
    edge[->,dashed] node[auto] {} (x);
    \node (d) at (6,0) {$(2,0)$}
    edge[<-,dashed] node[auto] {} (b)
    edge[->] node[auto] {} (w);
    \node (e) at (0,4) {$(0,2)$}
    edge[<-] node[auto] {} (c)
    edge[->,dashed] node[auto] {} (u);
\end{tikzpicture}
\caption{The $2$-graph $\Omega_{2,2}$.}\label{omega2}
\end{figure}
 \end{example}

\begin{rmk}\label{paths are morphisms}
The graphs $\Omega_{k,m}$ provide an intuitive model for paths: every path $\lambda$ of degree $m$ in a $k$-graph $\Lambda$ determines a $k$-graph morphism $x_\lambda: \Omega_{k,m} \to \Lambda$. To see this, let $p,q \in \NN^k$ be such that $p \leq q \leq m$. Define $x_\lambda(p,q) = \lambda''$, where $\lambda = \lambda'\lambda''\lambda'''$; and $d(\lambda') = p$, $d(\lambda'') = q-p$ and $d(\lambda''')=m-q$. In this way, paths in $\Lambda$ are often identified with the graph morphisms $x_\lambda:\Omega_{k,m} \to \Lambda$. We refer to the segment $\lambda''$ of $\lambda$ (as factorized above) as $\lambda(p,q)$, and for $n \leq m$, we refer to the vertex $r(\lambda(n,m)) = s(\lambda(0,n))$ as $\lambda(n)$. By analogy, for $m \in (\NN \cup \{\infty\})^k$ we define $\Lambda^m := \{x:\Omega_{k,m} \to \Lambda : x \text{ is a graph morphism.}\}$. For clarity of notation, if $m = (\infty)^k$ we write $\Lambda^\infty$.
\end{rmk}

Define
\[
    W_\Lambda:= \bigcup_{n \in (\NN \cup \{\infty\})^k} \Lambda^n.
\]
We call $W_\Lambda$ the \emph{path space} of $\Lambda$. We drop the subscript when confusion is unlikely.

For $m,n \in \NN^k$, we denote by $m \wedge n$ the coordinate-wise minimum, and by $m \vee n$ the coordinate-wise maximum. With no parentheses, $\vee$ and $\wedge$ take priority over the group operation: $a-b\wedge c$ means $a - (b \wedge c)$.

Since finite and infinite paths are fundamentally different, that one can compose them isn't immediately obvious.

\begin{lemma}[{\cite[Proposition~3.0.1.1]{WebsterPhD}}]
Let $\Lambda$ be a $k$-graph. Suppose $\lambda \in \Lambda$ and suppose that $x \in W$ satisfies $r(x) = s(\lambda)$. Then that there exists a unique $k$-graph morphism $\lambda x: \Omega_{k,d(\lambda) + d(x)} \to \Lambda$ such that $(\lambda x)(0, d(\lambda)) = \lambda$ and $(\lambda x)(d(\lambda), n+d(\lambda)) = x(0,n)$ for all $n \leq d(x)$.
\end{lemma}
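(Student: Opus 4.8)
The plan is to build the morphism one ``initial segment at a time'' and to reduce every computation to composition and factorisation of \emph{finite} paths in $\Lambda$, thereby never having to compose directly with the (possibly infinite) path $x$. Write $l := d(\lambda)$ and $m := d(\lambda) + d(x)$, and recall that the objects of $\Omega_{k,m}$ are the \emph{finite} tuples $t \in \NN^k$ with $t \leq m$. For each such $t$ the tuple $(t-l)\vee 0$ is finite and satisfies $(t-l)\vee 0 \leq d(x)$, so $x\big(0,(t-l)\vee 0\big)$ is a genuine finite path; since $r(x) = s(\lambda)$ the composite
\[
    P_t := \lambda\, x\big(0,(t-l)\vee 0\big) \in \Lambda, \qquad d(P_t) = l \vee t \geq t,
\]
is a well-defined finite path, and I set $z_t := P_t(0,t)$, a finite path of degree $t$. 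The point of keeping \emph{all} of $\lambda$ and attaching \emph{enough} of $x$ before truncating to degree $t$ is that it sidesteps the genuinely awkward ``mixed'' positions $t$ whose coordinates are incomparable to $l$: one never tries to glue the part of $\lambda$ below $l$ to the part of $x$ above $l$ across such coordinates, and the factorisation property of $\Lambda$ does the real work inside the truncation $P_t(0,t)$.

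First I would establish the prefix-consistency identity $z_t(0,s) = z_s$ for all $s \leq t \leq m$; this is the technical heart of the argument. Using that truncation of finite paths is transitive ($w(0,a)(0,b) = w(0,b)$ for $b \leq a$, a consequence of the factorisation property) it suffices to show $P_t(0, l \vee s) = P_s$. This in turn follows because restricting the composite $P_t = \lambda\, x\big(0,(t-l)\vee 0\big)$ to degree $l \vee s = l + \big((s-l)\vee 0\big) \geq l$ retains all of $\lambda$ and exactly the initial segment $x\big(0,(s-l)\vee 0\big)$ of the attached factor, so the restriction equals $P_s$; truncating once more to degree $s$ gives $z_t(0,s) = P_t(0,s) = P_s(0,s) = z_s$.

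With consistency in hand I would define $(\lambda x)(p,q) := z_q(p,q)$ for each morphism $(p,q)$ of $\Omega_{k,m}$ and verify that this is a degree-preserving functor: degrees are correct by construction, identities go to vertices, and for composable $(p,q),(q,t)$ consistency gives $z_q(p,q) = z_t(0,q)(p,q) = z_t(p,q)$, whence $(\lambda x)(p,q)(\lambda x)(q,t) = z_t(p,q)z_t(q,t) = z_t(p,t) = (\lambda x)(p,t)$. The two defining properties are then immediate: $P_l = \lambda\,x(0,0) = \lambda$ gives $(\lambda x)(0,l) = z_l = \lambda$, and for $n \leq d(x)$ one has $P_{l+n} = \lambda\,x(0,n)$ of degree exactly $l + n$, so $z_{l+n} = \lambda\,x(0,n)$ and hence $(\lambda x)(l, l+n) = x(0,n)$.

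Finally, uniqueness falls out of the same formula. If $y$ is any morphism with the required properties then for $t \leq m$, factoring $y(0, l \vee t) = y(0,l)\,y\big(l, l + (t-l)\vee 0\big) = \lambda\, x\big(0,(t-l)\vee 0\big) = P_t$ and truncating to degree $t$ forces $y(0,t) = P_t(0,t) = z_t$, and then $y(p,q) = y(0,q)(p,q) = z_q(p,q)$; so $y = \lambda x$. I expect the main obstacle to be the consistency identity, and specifically making precise the claim that restricting $P_t$ past degree $l$ strips off exactly an initial segment of $x$ --- this is where the factorisation property of $\Lambda$ (rather than merely that of $\lambda$ or of $x$) is essential.
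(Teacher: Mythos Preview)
The paper does not provide a proof of this lemma; it is cited without proof from the author's PhD thesis, so there is nothing in the paper to compare against directly.

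Your argument is correct and is the natural construction. Defining $P_t = \lambda\, x\big(0,(t-l)\vee 0\big)$ and $z_t = P_t(0,t)$, the consistency step $z_t(0,s) = z_s$ reduces, as you say, to $P_t(0, l \vee s) = P_s$; this holds because factoring the finite path $P_t$ at degree $l$ gives back $\lambda$ and $x\big(0,(t-l)\vee 0\big)$, and then truncating the second factor to degree $(s-l)\vee 0$ yields $x\big(0,(s-l)\vee 0\big)$ since $x$ is itself a functor. Functoriality of $(\lambda x)(p,q) := z_q(p,q)$, the two defining identities, and uniqueness all follow exactly as you describe. There are no gaps; the only point worth spelling out a little more is the identity $x(0,a)(0,b) = x(0,b)$ for $b \leq a$, but this is immediate from $x(0,a) = x(0,b)x(b,a)$ and the factorisation property in $\Lambda$.
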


\begin{defn}\label{defn:lmin and mce}
For $\lambda,\mu \in \Lambda$, write
\[
    \Lmin (\lambda,\mu) := \{ (\alpha,\beta) \in \Lambda \times \Lambda : \lambda\alpha = \mu\beta, d(\lambda\alpha) = d(\lambda) \vee d(\mu) \}
\]
for the collection of pairs which give \emph{minimal common extensions} of $\lambda$ and $\mu$, and denote the set of minimal common extensions by
\[
    \MCE(\lambda,\mu):= \{ \lambda\alpha : (\alpha,\beta) \in \Lmin(\lambda,\mu)\}= \{ \mu\beta : (\alpha,\beta) \in \Lmin(\lambda,\mu)\}.
\]
\end{defn}

\begin{defn}\label{defn: restrictions}
A $k$-graph $\Lambda$ is \emph{row-finite} if for each $v \in \Lambda^0$ and $m \in \NN^k$, the set $v\Lambda^m$ is finite; $\Lambda$ has \emph{no sources} if $v\Lambda^m \neq \emptyset$ for all $v \in \Lambda^0$ and $m\in\NN^k$.

We say that $\Lambda$ is \emph{finitely aligned} if $\Lmin (\lambda,\mu)$ is finite (possibly empty) for all $\lambda,\mu \in \Lambda$.

As in \cite[Definition 3.1]{RSY2003}, a $k$-graph $\Lambda$ is \emph{locally convex} if for all $v \in
\Lambda^0,$ all $i,j \in \{1,\dots k\}$ with $i \ne j$, all $\lambda \in v\Lambda^{e_i}$ and all $\mu \in v\Lambda^{e_j}$, the sets $s(\lambda)\Lambda^{e_j}$ and $s(\mu)\Lambda^{e_i}$ are non-empty. Roughly speaking, local convexity stipulates that $\Lambda$ contains no subgraph resembling:
\begin{center}
\begin{tikzpicture}[>=stealth,scale=0.7]
    \node (u) at (0,2) {$u$};
    \node (v) at (0,0) {$v$}
        edge[<-,dashed] node[auto] {$\mu$} (u);
    \node (w) at (2,0) {$w$}
        edge[->] node[auto] {$\lambda$} (v);
\end{tikzpicture}
\end{center}
\end{defn}

\begin{defn}\label{defn FE}
For $v \in \Lambda^0$, a subset $E \subset v\Lambda$ is \emph{exhaustive} if for every $\mu \in v\Lambda$ there exists a $\lambda \in E$ such that $\Lmin (\lambda,\mu) \neq \emptyset$. We denote the set of all \emph{finite exhaustive subsets} of $\Lambda$ by $\FE(\Lambda)$.
\end{defn}

\begin{defn}\label{defn boundary paths}
An element $x \in W$ is a \emph{boundary path} if for all $n \in
\NN^k$ with $n \leq d(x)$ and for all $E \in x(n)\FE(\Lambda)$ there exists $m \in \NN^k$ such that
$x(n,m) \in E$. We write $\partial \Lambda$ for the set of all boundary paths.

Define the set $\Lambda^{\leq\infty}$ as follows. A $k$-graph morphism $x:\Omega_{k,m} \to \Lambda$ is an element of $\Lambda^{\leq\infty}$ if there exists $n_x \leq d(x)$ such that for $n \in \NN^k$ satisfying $n_x\leq n \leq d(x)$ and $n_i = d(x)_i$, we have $x(n)\Lambda^{e_i} = \emptyset.$
\end{defn}

\begin{rmk}\label{boudary path remark}
Raeburn, Sims and Yeend introduced $\Lambda^{\leq\infty}$ to construct a nonzero Cuntz-Krieger $\Lambda$-family \cite[Proposition 2.12]{RSY2004}. Farthing, Muhly and Yeend introduced $\partial \Lambda$ in \cite{FMY2005}; in order to construct a groupoid to which Renault's theory of groupoid $C^*$-algebras \cite{Renault1980} applied, they required a path space which was locally compact and Hausdorff in an appropriate topology, and $\Lambda^{\leq\infty}$ did not suffice. The differences between $\partial\Lambda$ and $\Lambda^{\leq\infty}$ can be easily seen if $\Lambda$ contains any infinite receivers (e.g. any path in a $1$-graph $\Lambda$ with source an infinite receiver is an element of $\partial\Lambda \setminus \Lambda^{\leq\infty}$), but can even show itself in the row-finite case if $\Lambda$ is not locally convex.
\end{rmk}

\begin{example}\label{boundary different to leq infty}
Suppose $\Lambda$ is the $2$-graph with the skeleton pictured below.
\begin{center}
\begin{tikzpicture}[>=stealth,scale=0.7]
    \pgfmathparse{sqrt(2)}
    \foreach \x in {0,1,2,3}
        {
        \node (u\x) at (\x*2,2) {$\scriptstyle \bullet$};
        \node (v\x) at (\x*2,0) {$v_{\x}$};
        \node (w\x) at (\x*2+\pgfmathresult,-\pgfmathresult) {$\scriptstyle \bullet$};
        }
    \foreach \x / \y in {0/1,1/2,2/3}
        {
        \draw[<-] (u\x) to (u\y);
        \draw[<-] (v\x) to node[auto,black] {$x_\x$} (v\y);
        \draw[<-] (v\x) to node[auto,swap,black] {$\omega_\x$} (w\x);
        \draw[<-,dashed] (v\x) to node[auto,black] {$f_\x$} (u\x);
        }
    \draw[<-,dashed] (v3) to node[auto,black] {$f_3$} node[auto,swap,black!60] {$\dots$} (u3);
    \draw[<-] (v3) to node[auto,black!50] {$\dots$} node[auto,swap,black] {$\omega_3$} (w3);
    \draw[<-,black!60] (v3) to (7,0);
    \draw[<-,black!60] (u3) to (7,2);
\end{tikzpicture}
\end{center}

Consider the paths $x = x_0 x_1 \dots$, and $\omega^{n} = x_0 x_1 \dots x_{n-1} \omega_n$ for $n=0,1,2,\dots$. Observe that $x \notin \Lambda^{\leq\infty}$: for each $n \in \NN$, we have $d(x)_2 = 0= (n,0)_2$, and $x((n,0)) \Lambda^{e_2}  = v_n \Lambda^{e_2} \ne \emptyset$.

We claim that $x \in \partial \Lambda$. Fix $m \in \NN$ and $E\in v_m\FE(\Lambda)$. Since $E$ is exhaustive, for each $n \geq m$, there exists $\lambda^n \in E$ such that $\MCE(\lambda^n,x_m\dots x_{n-1}\omega_n)\neq \emptyset$. Since $E$ is finite, it can not contain $x_m\dots x_{n-1}\omega_n$ for every $n \geq m$, so it must contain $x_m \dots x_p$ for some $p \in \NN$. So $x((m,0),(m+p)) = x_m \dots x_p$ belongs to $E$.

The $2$-graph of Example \ref{boundary different to leq infty} first appeared in Robertson's honours thesis \cite{dave} to illustrate a subtlety arising in Farthing's procedure \cite{Farthing2008} for removing sources in $k$-graphs when the $k$-graphs in question are not locally convex. It was for this reason that only locally convex $k$-graphs in the main results of \cite{dave,RS2009}.
\end{example}

\begin{prop}\label{leqsubsetpartial}
Suppose $\Lambda$ is a finitely aligned $k$-graph. Then $\Lambda^{\leq \infty} \subset \partial \Lambda$. If $\Lambda$ is row-finite and locally convex, then $\Lambda^{\leq\infty}=\partial\Lambda$.
\end{prop}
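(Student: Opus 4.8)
The plan is to prove the two inclusions separately, in each case reducing the relevant path-space condition to a statement about a single truncation of $x$.

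For $\Lambda^{\leq\infty}\subset\partial\Lambda$ I would fix $x\in\Lambda^{\leq\infty}$ with witnessing level $n_x$, fix $n\leq d(x)$ and a finite exhaustive set $E\in x(n)\FE(\Lambda)$, and produce $m$ with $x(n,m)\in E$. The idea is to truncate $x$ far enough past both $n$ and $n_x$ to swallow every element of $E$. Concretely, put $M=\bigvee_{\lambda\in E}d(\lambda)$ (finite, and $E\neq\emptyset$ since $\emptyset$ is never exhaustive), $b=n\vee n_x$, and $p=d(x)\wedge(b+M)$, so that $n_x\leq b\leq p\leq d(x)$. Applying exhaustiveness to $\mu:=x(n,p)$ gives $\lambda\in E$ and $(\alpha,\beta)\in\Lmin(\lambda,\mu)$. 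The crux is to show $\beta$ is a vertex: then $\lambda\alpha=\mu\beta=\mu=x(n,p)$, so $\lambda=x(n,n+d(\lambda))$ by the factorisation property and $m=n+d(\lambda)$ works.

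To see $d(\beta)=0$, suppose $d(\beta)_i>0$ for some $i$. Since $d(\beta)=(d(\lambda)-d(\mu))\vee 0$, this forces $d(\lambda)_i>d(\mu)_i=p_i-n_i$; as $d(\lambda)_i\leq M_i$ this yields $p_i<n_i+M_i\leq b_i+M_i$, so the minimum defining $p_i$ is attained by $d(x)_i$, i.e.\ $p_i=d(x)_i$. But $\beta$ starts at $s(\mu)=x(p)$ and has $d(\beta)_i\geq1$, so factoring off an initial $e_i$-edge shows $x(p)\Lambda^{e_i}\neq\emptyset$; since $n_x\leq p\leq d(x)$ and $p_i=d(x)_i$, this contradicts the defining property of $\Lambda^{\leq\infty}$. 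Hence $\beta$ is a vertex. (Only the finiteness of $E$ is used here, not full finite alignment.) For the reverse inclusion under row-finiteness and local convexity I would show every $x\in\partial\Lambda$ satisfies the $\Lambda^{\leq\infty}$ condition with $n_x=0$, arguing by contradiction. Suppose some $n\leq d(x)$ has $n_i=d(x)_i$ yet $x(n)\Lambda^{e_i}\neq\emptyset$, and set $v=x(n)$, $E=v\Lambda^{e_i}$. Row-finiteness makes $E$ finite, and I claim local convexity makes it exhaustive: given $\mu\in v\Lambda$, if $d(\mu)_i\geq1$ then $\mu$ factors through its initial $e_i$-edge, giving a common extension; if $d(\mu)_i=0$, writing $\mu$ as a product of edges in directions other than $i$ and applying local convexity repeatedly shows $s(\mu)\Lambda^{e_i}\neq\emptyset$, whence $\mu\beta$ for any $\beta\in s(\mu)\Lambda^{e_i}$ is a minimal common extension of $\mu$ and its initial $e_i$-edge. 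Thus $E\in v\FE(\Lambda)$, and since $x\in\partial\Lambda$ there is $m$ with $x(n,m)\in E=v\Lambda^{e_i}$, forcing $m=n+e_i$ and hence $m_i=n_i+1>d(x)_i$, contradicting $m\leq d(x)$. So no such $n$ exists.

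The main obstacle is the exhaustiveness claim for $v\Lambda^{e_i}$ in the second part, where local convexity is essential: it is exactly the inductive reordering step—postponing an $e_i$-move past any number of moves in other directions—that fails in the non-locally-convex Example~\ref{boundary different to leq infty}, and this is precisely the phenomenon separating $\partial\Lambda$ from $\Lambda^{\leq\infty}$. In the first part the comparable subtlety is mere bookkeeping: coordinates in which $x$ has finite degree cannot be truncated far enough to dominate $d(\lambda)$, and it is the $\Lambda^{\leq\infty}$ hypothesis, applied through $n_x$, that excludes an offending edge in exactly those coordinates.
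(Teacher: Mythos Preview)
Your proposal is correct and follows essentially the same approach as the paper. For the first inclusion you truncate $x$ at $p=d(x)\wedge(b+M)$ where the paper truncates at a point $t$ defined coordinatewise (equal to $d(x)_i$ in finite coordinates, large enough otherwise); the ensuing case analysis showing the extension of $x(n,p)$ is trivial is the same. For the reverse inclusion you reprove inline what the paper isolates as Lemma~\ref{v lambda ei non empty then exh} (that $v\Lambda^{e_i}$ is exhaustive under local convexity) and then derive the contradiction identically; one small wording slip is ``its initial $e_i$-edge'' when $d(\mu)_i=0$, where you mean the initial $e_i$-edge of $\mu\beta$, but the argument is clear.
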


To prove this we use the following lemma.

\begin{lemma}\label{v lambda ei non empty then exh}
Let $\Lambda$ be a row-finite, locally convex $k$-graph, and suppose that $v \in \Lambda^0$ satisfies
$v\Lambda^{e_i} \ne \emptyset$ for some $i \leq k$. Then $v\Lambda^{e_i} \in v \FE(\Lambda)$.
\begin{proof}
Since $\Lambda$ is row-finite, $v \Lambda^{e_i}$ is finite. To see that it is exhaustive, let $\mu \in
v\Lambda$. If $d(\mu)_i > 0$, then $g = \mu(0,e_i) \in v\Lambda^{e_i}$ implies that $\Lmin(\mu,g) \ne
\emptyset$. Suppose that $d(\mu)_i = 0$. Let $\mu = \mu_1 \dots \mu_n$ be a factorisation of
$\mu$ such that $|d(\mu_j)|=1$ for each $j \leq n$. Since $\Lambda$ is locally convex, $s(\mu)\Lambda^{e_i} = s(\mu_n)\Lambda^{e_i} \ne \emptyset$. Fix $g \in s(\mu)\Lambda^{e_i}$. Let $f:=(\mu g)(0,e_i)$. Then $f \in v\Lambda^{e_i}$. Since $d(\mu_i)=0$, we have $d(\mu g) = d(\mu) \vee d(f)$. Hence $(g,(\mu g)(e_i,d(\mu g))) \in \Lambda^{\min}(\mu,f)$ as required.
\end{proof}
\end{lemma}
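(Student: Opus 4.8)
The plan is to verify the two defining properties of a finite exhaustive set in turn. Finiteness is immediate: since $\Lambda$ is row-finite, $v\Lambda^{e_i}$ is finite by definition. The content of the lemma is therefore exhaustiveness, so I would fix an arbitrary $\mu \in v\Lambda$ and produce some $\lambda \in v\Lambda^{e_i}$ with $\Lmin(\lambda,\mu)\neq\emptyset$, splitting into two cases according to whether $d(\mu)_i$ is positive or zero.

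First I would dispose of the easy case $d(\mu)_i > 0$. Here the factorisation property lets me peel off the initial segment $g := \mu(0,e_i)$, which lies in $v\Lambda^{e_i}$. Since $d(g)=e_i \leq d(\mu)$, we have $d(g)\vee d(\mu)=d(\mu)$, and the factorisation $\mu = g\cdot\mu(e_i,d(\mu))$ exhibits $\mu$ itself as a common extension of $g$ and $\mu$ of the minimal degree, so $\bigl(\mu(e_i,d(\mu)),\,s(\mu)\bigr)\in\Lmin(g,\mu)$ and this set is nonempty.

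The case $d(\mu)_i = 0$ is where local convexity enters, and it is the main obstacle. The reduction is clean once I know $s(\mu)\Lambda^{e_i}\neq\emptyset$: picking $g\in s(\mu)\Lambda^{e_i}$ and setting $f := (\mu g)(0,e_i)\in v\Lambda^{e_i}$, the hypothesis $d(\mu)_i=0$ forces $d(\mu g)=d(\mu)+e_i = d(\mu)\vee d(f)$, so $\bigl(g,\,(\mu g)(e_i,d(\mu g))\bigr)$ witnesses $\Lmin(\mu,f)\neq\emptyset$ (equivalently, since $\MCE$ is symmetric, $f\in v\Lambda^{e_i}$ does the job for exhaustiveness). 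The real work is establishing $s(\mu)\Lambda^{e_i}\neq\emptyset$. To this end I would factor $\mu=\mu_1\cdots\mu_n$ into edges with $|d(\mu_j)|=1$; because $d(\mu)_i=0$, none of the $\mu_j$ can have degree $e_i$, so each $\mu_l$ is an edge of some colour $j_l\neq i$. I then propagate the hypothesis $v\Lambda^{e_i}\neq\emptyset$ forward along $\mu$ by induction on $l$: at the vertex $r(\mu_l)$ the inductive hypothesis supplies an $e_i$-edge while $\mu_l$ supplies an edge of a different colour, so local convexity yields an $e_i$-edge at $s(\mu_l)$. After $n$ steps this gives $s(\mu_n)\Lambda^{e_i}=s(\mu)\Lambda^{e_i}\neq\emptyset$.

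The delicate point, and the only place where more than bookkeeping is required, is precisely this inductive propagation: local convexity is stated as a condition at a \emph{single} vertex, and converting it into a statement about the far endpoint of $\mu$ demands checking at each step both that the previous vertex genuinely has an $e_i$-edge available (so that local convexity is applicable) and that the current edge has colour different from $i$ (which is guaranteed throughout by $d(\mu)_i=0$). The base case $n=0$, where $\mu=v$ is a vertex, is covered directly by the hypothesis $v\Lambda^{e_i}\neq\emptyset$.
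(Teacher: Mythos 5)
Your proposal is correct and follows essentially the same route as the paper: the same easy case via $g=\mu(0,e_i)$, and the same construction $f=(\mu g)(0,e_i)$ with witness $\bigl(g,(\mu g)(e_i,d(\mu g))\bigr)\in\Lmin(\mu,f)$ in the case $d(\mu)_i=0$. The only difference is that you make explicit the induction along the edge factorisation $\mu=\mu_1\cdots\mu_n$ (using $d(\mu)_i=0$ to ensure each edge has colour $\neq i$ so local convexity applies), which the paper compresses into the single assertion $s(\mu)\Lambda^{e_i}=s(\mu_n)\Lambda^{e_i}\neq\emptyset$.
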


\begin{proof}[Proof of Proposition~\ref{leqsubsetpartial}]
Fix $x \in \Lambda^{\leq\infty}$. Let $m \leq d(x)$ and $E \in x(m)\FE(\Lambda)$. Define $t \in \NN^k$ by
\[
    t_i :=
        \begin{cases}
            d(x)_i &\text{if $d(x)_i < \infty$}, \\
            \ds\max_{\lambda \in E} \big(n_x \vee (m + d(\lambda))\big)_i &\text{if $d(x)_i=\infty$}.
        \end{cases}
\]
Then $x(m,t) \in x(m)\Lambda$, so there exists $\lambda \in E$ such that $\Lmin(x(m,t),\lambda)$ is non-empty. Let $(\alpha,\beta) \in \Lmin(x(m,t),\lambda)$. We first show that $d(\alpha)=0$. Since $x \in \Lambda^{\leq \infty}$ and $n_x \leq t \leq d(x)$, if $d(x)_i < \infty$ then $x(t)\Lambda^{e_i} = \emptyset$. So for each $i$ such that $d(x)_i < \infty$, we have $d(\alpha)_i = 0$. Now suppose that $d(x)_i = \infty$. Then $d(x(m,t))_i = t_i -m_i \geq d(\lambda)_i$. So $d(x(m,t)\alpha)_i = \max\{d(x(m,t))_i, d(\lambda)_i\} = d(x(m,t))_i$, giving $d(\alpha)_i = 0$. Then we have $x(m,t) = \lambda\beta$, so $x(m,m+d(\lambda)) = \lambda$.

Now suppose that $\Lambda$ is row-finite and locally convex. We want to show $\partial \Lambda \subset
\Lambda^{\leq\infty}$. Fix $x \in \partial \Lambda$, and $n \in \NN^k$ such that $n \leq d(x)$ and $n_i = d(x)_i$. It suffices to show that $x(n)\Lambda^{e_i} = \emptyset$. Since $n_i = d(x)_i$, we have $x(n)\Lambda^{e_i} \notin x(n)\FE(\Lambda)$. Lemma~\ref{v lambda ei non empty then exh} then implies that $x(n)\Lambda^{e_i} = \emptyset$.
\end{proof}

\section{Path Space Topology}\label{hrg top}
Following the approach of Paterson and Welch in \cite{PW2005}, we construct a locally compact Hausdorff topology on the path space $W$ of a finitely aligned $k$-graph $\Lambda$. The \emph{cylinder set of $\mu \in \Lambda$} is $\Zz(\mu) := \{ \nu \in W : \nu(0,d(\mu)) = \mu \}$. Define $\alpha: W \to\{0,1\} ^{\Lambda}$ by $\alpha(w)(y) = 1$ if $w \in \Zz(y)$ and $0$ otherwise. For a finite subset $G \subset s(\mu)\Lambda$ we define
\begin{equation}
    \Zz(\mu\setminus G):= \Zz(\mu) \setminus \bigcup_{\nu \in G} \Zz(\mu\nu).
\end{equation}
Our goals for this section are the following two theorems. The basis we end up with is slightly different to that in \cite[Corollary 2.4]{PW2005}, revealing a minor oversight of the authors.

\begin{theorem}\label{hrg topology}
  Let $\Lambda$ be a finitely aligned $k$-graph. Then the collection
\[
 \Big\{ \Zz(\mu \setminus G) : \mu \in \Lambda \text{ and } G\subset \bigcup_{i=1}^k(s(\mu)\Lambda^{e_i})\text{ is finite} \Big\}
\]
is a base for the initial topology on $W$ induced by $\{\alpha\}$.
\end{theorem}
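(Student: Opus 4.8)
The plan is to make the subbasis of the initial topology explicit and then verify the two conditions that make $\{\Zz(\mu\setminus G)\}$ a base. Since $\{0,1\}$ is discrete, the topology induced by $\alpha$ is generated by the subbasic sets $\Zz(y)=\alpha^{-1}\{\xi:\xi(y)=1\}$ and $W\setminus\Zz(y)=\alpha^{-1}\{\xi:\xi(y)=0\}$ for $y\in\Lambda$, and finite intersections of these form a base. Writing $\Zz(\mu\setminus G)=\Zz(\mu)\cap\bigcap_{\nu\in G}\big(W\setminus\Zz(\mu\nu)\big)$ exhibits each proposed basic set as a finite intersection (as $G$ is finite) of subbasic sets, so every $\Zz(\mu\setminus G)$ is open. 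It remains to show that for each basic set $B=\bigcap_{i}\Zz(y_i)\cap\bigcap_j\big(W\setminus\Zz(z_j)\big)$ and each $x\in B$ there is a set with $x\in\Zz(\mu\setminus G)\subseteq B$. Throughout I will use the finite-alignment identity $\Zz(\sigma)\cap\Zz(\rho)=\bigcup_{\tau\in\MCE(\sigma,\rho)}\Zz(\tau)$, which makes the relevant intersections finite unions of cylinders.

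First I dispatch the positive constraints: since each $d(y_i)\le d(x)$ with $y_i=x(0,d(y_i))$, the degree $p:=\bigvee_i d(y_i)$ satisfies $p\le d(x)$, and any initial segment $\mu=x(0,q)$ with $q\ge p$ has $\Zz(\mu)\subseteq\bigcap_i\Zz(y_i)$. So it suffices to find $q\ge p$ (with $q\le d(x)$) and a finite set $G$ of edges out of $s(\mu)$ with $x\notin\Zz(\mu\nu)$ for each $\nu\in G$ while $\Zz(\mu)\cap\Zz(z_j)\subseteq\bigcup_{\nu\in G}\Zz(\mu\nu)$ for every $j$; the latter forces $\Zz(\mu\setminus G)\cap\Zz(z_j)=\varnothing$.

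The heart of the matter is a separation step for a single $z=z_j$ with $r(z)=r(x)$ (if $r(z)\neq r(x)$ then $\Zz(\mu)\cap\Zz(z)=\varnothing$ for every admissible $\mu$, so nothing is needed). Set $c:=d(z)\wedge d(x)\in\NN^k$ and compare $x,z$ on their overlap. Either $x(0,c)=z(0,c)$, in which case $d(z)\not\le d(x)$ (else $x\in\Zz(z)$) so some coordinate $i$ has $d(x)_i=c_i<d(z)_i$, and I take $q'_j:=c$ and $\nu_j:=z(c,c+e_i)$; or $x(0,c)\neq z(0,c)$, in which case I take a maximal $b\le c$ with $x(0,b)=z(0,b)$, pick (as $b\neq c$) a direction $i$ with $b+e_i\le c$, and set $q'_j:=b$, $\nu_j:=z(b,b+e_i)$, the edge at which $x$ and $z$ first diverge in direction $i$. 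In both cases $\nu_j$ is a single edge out of $s(x(0,q'_j))$ with $x\notin\Zz(x(0,q'_j)\nu_j)$, and every common extension of $x(0,q'_j)$ and $z$ extends $x(0,q'_j)\nu_j$; hence $\Zz(x(0,q'_j))\cap\Zz(z)\subseteq\Zz(x(0,q'_j)\nu_j)$.

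The main obstacle is that one cannot separate at the short segment $\mu=x(0,p)$: a minimal common extension $\mu\alpha$ of $\mu$ and $z_j$ may agree with $x$ on each initial edge $\alpha(0,e_i)$ yet differ from $x$ overall, since the factorisation property permits distinct paths of degree $(1,1)$ sharing their first edge in each direction, so no single edge out of $s(\mu)$ need both cover $\mu\alpha$ and avoid $x$. Following $x$ to the genuine point of divergence is exactly what the previous paragraph arranges. To finish, set $q:=p\vee\bigvee_j q'_j\le d(x)$ and $\mu:=x(0,q)$. For each $j$, finite alignment makes $\MCE(x(0,q'_j)\nu_j,\mu)$ finite; when nonempty each of its elements has degree $q+e_{i_j}$ and so equals $\mu g$ for a single $i_j$-edge $g$ out of $s(\mu)$ (the set is empty precisely when $\mu$ already excludes $z_j$). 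Letting $G_j$ collect these edges $g$, the inclusion $\Zz(\mu)\cap\Zz(z_j)\subseteq\Zz(x(0,q'_j)\nu_j)\cap\Zz(\mu)=\bigcup_{g\in G_j}\Zz(\mu g)$ holds, and $x\notin\Zz(\mu g)$ because $x\in\Zz(\mu g)$ would force $x\in\Zz(x(0,q'_j)\nu_j)$, contrary to construction. Taking $G:=\bigcup_j G_j$, a finite set of edges out of $s(\mu)$, yields $x\in\Zz(\mu\setminus G)\subseteq B$, completing the verification.
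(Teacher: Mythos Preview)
Your argument is correct. Both you and the paper reduce the problem to showing that around any point of a ``raw'' basic set one can find a set $\Zz(\mu\setminus G)$ with $G$ consisting only of edges, and both invoke finite alignment so that the relevant $\MCE$-sets are finite. The difference lies in how the refinement is produced.

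The paper first observes that the sets $\Zz(\mu\setminus G)$ with \emph{arbitrary} finite $G\subset s(\mu)\Lambda$ already form a base (this drops out of the description of $\alpha^{-1}(N(F,G))$), and then refines: given $\lambda\in\Zz(\mu\setminus G)$ it passes in one stroke to depth $N=(\bigvee_{\nu\in G}d(\mu\nu))\wedge d(\lambda)$, sets $\alpha=\lambda(0,N)$, and for each $\nu\in G$ with $N\not\ge d(\mu\nu)$ takes $F_\nu=\{\gamma(N,N+e_{j_\nu}):\gamma\in\MCE(\alpha,\mu\nu)\}$ for a fixed direction $j_\nu$ in which $N$ is deficient. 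By contrast, you work directly from the subbasis and treat each excluded cylinder $z_j$ individually: you locate a specific point of divergence $q'_j\le d(x)$ and a single witnessing edge $\nu_j$, and only afterwards pass to the common depth $q=\bigvee_j q'_j$, using $\MCE(x(0,q'_j)\nu_j,\mu)$ to transport each $\nu_j$ forward to edges at $s(\mu)$. Your approach makes the geometric content (``follow $x$ until it actually differs from $z_j$'') more transparent and explains clearly why one cannot in general separate with edges at a shallower depth; the paper's approach is more uniform and avoids the two-case analysis on whether $x$ and $z_j$ agree on their overlap, at the cost of being slightly less intuitive. Neither buys a genuine simplification over the other.
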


\begin{theorem}\label{hrg lchs}
Let $\Lambda$ be a finitely aligned higher-rank graph. With the topology described in Theorem~\ref{hrg topology}, W is a locally compact Hausdorff space.
\end{theorem}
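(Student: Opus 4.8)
The plan is to exploit that the topology on $W$ is the initial topology for the single map $\alpha\colon W \to \{0,1\}^\Lambda$, and that the cube $\{0,1\}^\Lambda$ is compact Hausdorff by Tychonoff's theorem. First I would check that $\alpha$ is injective: for $w \in W$, the value $\alpha(w)$ records exactly the set $S_{\alpha(w)} := \{ y \in \Lambda : \alpha(w)(y) = 1\}$ of finite initial segments of $w$, and from this set one recovers both the degree $d(w)_i = \sup\{ d(y)_i : y \in S_{\alpha(w)}\}$ and every value $w(0,p)$, namely the unique element of $S_{\alpha(w)}$ of degree $p$; hence $\alpha(w) = \alpha(w')$ forces $w = w'$. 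Since the topology on $W$ is initial for $\alpha$, injectivity makes $\alpha$ a homeomorphism onto the subspace $\alpha(W) \subset \{0,1\}^\Lambda$, and a subspace of a Hausdorff space is Hausdorff. This disposes of the Hausdorff assertion.

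For local compactness I would produce, for each $w \in W$, a compact neighbourhood. Observe that for every $\mu \in \Lambda$ the cylinder $\Zz(\mu) = \alpha^{-1}(\{ f : f(\mu) = 1\})$ is clopen, being the preimage under the continuous map $\alpha$ of a clopen coordinate set. In particular $\Zz(r(w))$ is an open neighbourhood of $w$, so it suffices to prove that each $\Zz(v)$, $v \in \Lambda^0$, is compact; since $\alpha$ is a homeomorphism onto its image, this reduces to showing that $\alpha(\Zz(v))$ is \emph{closed} in the compact space $\{0,1\}^\Lambda$.

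The heart of the argument is thus to identify $\alpha(\Zz(v))$ as a closed set. I would characterise $\alpha(W)$ as the set of $f \in \{0,1\}^\Lambda$ for which $S_f$ is nonempty, \emph{hereditary} (if $f(y) = 1$ and $p \le d(y)$ then $f(y(0,p)) = 1$), of \emph{unique degree} (no two distinct elements of $S_f$ share a degree), and \emph{directed} (any $y,y' \in S_f$ have a common extension in $S_f$). Given such an $f$, one reconstructs a path $w$ with $S_{\alpha(w)} = S_f$ by setting $w(0,p)$ to be the unique element of $S_f$ of degree $p$; directedness and heredity guarantee this exists for every finite $p \le \sup_{y \in S_f} d(y)$, unique degree forces the segments to cohere into a genuine morphism, and then $f = \alpha(w)$. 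This reconstruction step is routine. Now $\alpha(\Zz(v)) = \{ f \in \alpha(W) : f(v) = 1\}$, and I would prove it closed by checking that each defining condition passes to limits of nets (or sequences, since $\Lambda$ is countable). The conditions $f(v)=1$, heredity, and unique degree are each intersections of clopen sets, hence automatically closed.

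The one condition that is not manifestly closed, and the step I expect to be the main obstacle, is directedness, and this is exactly where finite alignment enters. Suppose $\alpha(w_\lambda) \to f$ with $w_\lambda \in \Zz(v)$, and fix $y,y' \in S_f$. Since $\{g : g(y) = 1\}$ and $\{g : g(y') = 1\}$ are open, eventually $y$ and $y'$ are both initial segments of $w_\lambda$, whence $w_\lambda(0,\, d(y) \vee d(y')) \in \MCE(y,y')$. Finite alignment makes $\MCE(y,y')$ finite, so $g \mapsto \sum_{z \in \MCE(y,y')} g(z)$ is continuous; it is eventually $\ge 1$ along the net, so its value $\sum_{z \in \MCE(y,y')} f(z)$ at the limit is $\ge 1$, forcing $f(z)=1$ for some common extension $z \in \MCE(y,y')$. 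Thus $S_f$ is directed and $f \in \alpha(\Zz(v))$, so $\alpha(\Zz(v))$ is closed and therefore compact, completing the proof. Without finite alignment the set $\MCE(y,y')$ could be infinite and this continuity argument would break down, so I would flag finite alignment as doing the essential work.
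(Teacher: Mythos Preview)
Your proposal is correct and follows the same overall strategy as the paper: both reduce to showing that $\alpha(\Zz(v))$ is closed in the compact Hausdorff cube $\{0,1\}^\Lambda$, and both use finite alignment at exactly the same point, to pass from ``$y$ and $y'$ are eventually both initial segments'' to ``some element of $\MCE(y,y')$ survives in the limit.''

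The execution differs slightly in flavour. The paper argues sequentially: it takes a convergent sequence $\alpha(\omega^{(n)}) \to f$, sets $A = \{\nu : f(\nu)=1\}$ (your $S_f$), shows by a pigeonhole argument that $\MCE(\mu,\nu) \cap A$ is a singleton for each $\mu,\nu \in A$, and then builds the limit path $\omega$ by listing $A$, taking iterated minimal common extensions, and invoking the nested-paths Lemma~\ref{nested_paths}. You instead give an intrinsic description of $\alpha(\Zz(v))$ as the set of $f$ satisfying four conditions (contains $v$, hereditary, unique degree, directed), observe that the first three are visibly intersections of clopen sets, and handle directedness with the continuous finite sum $g \mapsto \sum_{z \in \MCE(y,y')} g(z)$; your reconstruction of $w$ from $S_f$ is then direct (set $w(0,p)$ to be the unique element of degree $p$) rather than via an enumeration. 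Your packaging is a little cleaner and makes the role of finite alignment more transparent, but the two arguments are doing the same work and neither gains real generality over the other.
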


Let $F$ be a set of paths in a $k$-graph $\Lambda$. A path $\beta \in W$ is a \emph{common extension of the paths in $F$} if for each $\mu \in F$, we can write $\beta = \mu \beta_\mu$ for some $\beta_\mu \in W$. If in addition $d(\beta) = \bigvee_{\mu\in F} d(\mu)$, then $\beta$ is a \emph{minimal common extension of the paths in $F$}. We denote the set of all minimal common extensions of the paths in $F$ by $\MCE(F)$. Since $\MCE(\{\mu,\nu\}) = \MCE(\mu,\nu)$, this definition is consistent with Definition~\ref{defn:lmin and mce}.

\begin{rmk}
If $F \subset \Lambda$ is finite, then
$\bigcap_{\mu\in F} \Zz(\mu) = \bigcup_{\beta \in \MCE(F)} \Zz(\beta).$
\end{rmk}

\begin{proof}[Proof of Theorem~\ref{hrg topology}]
We first describe the topology on $\{0,1\} ^\Lambda$. Given disjoint finite subsets $F,G \subset \Lambda$ and $\mu\in\Lambda$, define sets $U_\mu^{F,G}$  to be $\{1\}$ if $\mu \in F$, $\{0\}$ if $\mu \in G$ and $\{0,1\}$ otherwise. Then the sets $N(F, G):= \prod_{\mu\in\Lambda}U_\mu^{F,G}$ where $F,G$ range over all finite disjoint pairs of subsets of $\Lambda$ form a base for the topology on $\{0,1\}^{\Lambda}$.

Clearly, $\alpha$ is a homeomorphism onto its range, and hence the sets $\alpha^{-1}(N(F, G))$ are a base for a topology on $W$. Routine calculation shows that
\[
\alpha^{-1}(N(F, G))= \left( \bigcup_{\mu \in \MCE(F)} \Zz(\mu) \right) \setminus \left( \bigcup_{\nu \in G}
\Zz(\nu)\right),
\]
so the sets
$
\Zz(\mu) \setminus \bigcup_{\nu \in G} \Zz(\mu\nu) = \Zz( \mu \setminus G)
$
are a base for our topology.

To finish the proof, it suffices to show that for $\mu \in \Lambda$, a finite subset $G \subset s(\mu)\Lambda$ and $\lambda \in \Zz(\mu \setminus G)$, there exist $\alpha \in \Lambda$ and a finite $F \subset \bigcup_{i=1}^k\big(s(\alpha)\Lambda^{e_i}\big)$ such that
$
\lambda \in \Zz(\alpha \setminus F) \subset \Zz(\mu \setminus G).
$
Let $N:= \left(\bigvee_{\nu \in G} d(\mu\nu)\right) \wedge d(\lambda)$ and $\alpha =
\lambda(0,N)$. To define $F$, we first define a set $F_\nu$ associated to each $\nu \in G$, then take
$F=\bigcup_{\nu \in G} F_\nu$. Fix $\nu \in G$. We consider the following cases:
\begin{enumerate}
\item If $N \geq d(\mu\nu)$ or $\MCE(\alpha,\mu\nu) = \emptyset$, let $F_\nu = \emptyset$.
\item If $N \ngeq d(\mu\nu)$ and $\MCE(\alpha,\mu\nu) \neq \emptyset$, define $F_\nu$ as follows:
\end{enumerate}
Since $N \ngeq d(\mu\nu)$, there exists $j_\nu \leq k$ such that $N_{j_\nu} < d(\mu\nu)_{j_\nu}$. Hence each $\gamma \in \MCE(\alpha,\mu\nu)$ satisfies $d(\gamma)_{j_\nu} = (N \vee d(\mu\nu))_{j_\nu} > N_{j_\nu}$. Define $F_\nu = \{ \gamma(N, N + e_{j_\nu}): \gamma \in \MCE(\alpha,\mu\nu)\}.$ Since $\Lambda$ is finitely aligned, $F_\nu$ is finite.

We now show that $\lambda \in \Zz(\alpha \setminus F)$. We have $\lambda \in \Zz(\alpha)$ by choice of
$\alpha$. If $F = \emptyset$ we are done. If not, then fix $\nu \in G$ such that $F_\nu \ne \emptyset$, and fix $e \in F_\nu$. Then $e = \gamma(N,N+ e_{j_\nu})$ for some $\gamma \in \MCE(\alpha,\mu\nu)$. Then $d(\lambda)_{j_\nu} = N_{j_\nu} < (N+ e_{j_\nu})_{j_\nu} = d(\alpha e)_{j_\nu}$. So $\lambda \notin \Zz(\alpha e)$, hence $\lambda \in \Zz(\alpha \setminus F)$.

We now show that $\Zz(\alpha \setminus F) \subset \Zz(\mu \setminus G)$. Fix $\beta \in \Zz(\alpha \setminus F)$. Since $\alpha \in \Zz(\mu)$, we have $\beta \in \Zz(\mu)$. Fix $\nu \in G$. We show that $\beta \notin \Zz(\mu\nu)$ in cases:
\begin{enumerate}
\item Suppose that $N \geq d(\mu\nu)$. Since $\beta \in \Zz(\alpha) = \Zz(\lambda(0,N))$ and $\lambda \notin \Zz(\mu\nu)$, it follows that $\beta \notin \Zz(\mu\nu)$.
\item If $N \ngeq d(\mu\nu)$, then either
\begin{enumerate}
\item $\MCE(\alpha,\mu\nu) = \emptyset$, in which case $\beta \in \Zz(\alpha)$ implies that $\beta \notin \Zz(\mu\nu)$; or
\item $\MCE(\alpha,\mu\nu) \neq \emptyset$. Then for each $\gamma \in \MCE(\alpha,\mu\nu)$, we know $\beta(N,N+e_{j_\nu}) \neq \gamma(N,N+e_{j_\nu})$. It then follows that $\beta \notin \Zz(\mu\nu).$\qedhere
\end{enumerate}
\end{enumerate}
\end{proof}


\begin{lemma}\label{nested_paths}
Let $\{\nu^{(n)}\}$ be a sequence of paths in $\Lambda$ such that \begin{itemize}
\item[(i)] $d(\nu^{(n+1)}) \geq d(\nu^{(n)})$ for all $n \in \NN$, and
\item[(ii)] $\nu^{(n+1)}\left(0,d(\nu^{(n)})\right) = \nu^{(n)}$ for all $n \in \NN$.
\end{itemize}
Then there exists a unique $\omega \in W$ with $d(\omega) = \bigvee_{n\in\NN} d(\nu^{(n)})$ and
$\omega\left(0,d(\nu^{(n)})\right) = \nu^{(n)}$ for all $n \in \NN$.
\begin{proof}
 Let $m = \bigvee_{n\in\NN} d(\nu^{(n)}) \in (\NN \cup \{\infty\})^k.$ Then
\begin{equation}\label{there exists Na}\parbox{0.8\textwidth}{For $a\in \NN^k$ with $a \leq m$, there exists $N_a \in \NN$ such that $d(\nu^{(N_a)}) \geq a$.}\end{equation}

For each $(p,q) \in \Omega_{k,m}$ apply \eqref{there exists Na} with $a=q$ and define $\omega(p,q) = \nu^{(N_q)}(p,q)$. Routine calculations using \eqref{there exists Na} show that $\omega:\Omega_{k,m} \to \Lambda$ is a well-defined graph morphism with the required properties.
\end{proof}
\end{lemma}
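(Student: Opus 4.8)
The plan is to build $\omega$ directly as a $k$-graph morphism $\Omega_{k,m}\to\Lambda$, where $m:=\bigvee_{n\in\NN}d(\nu^{(n)})\in(\NN\cup\{\infty\})^k$, by letting $\omega$ agree on each segment with whichever $\nu^{(n)}$ is already long enough to contain that segment. The enabling observation is that for every $a\in\NN^k$ with $a\le m$ there is an index $N_a$ with $d(\nu^{(N_a)})\ge a$: by hypothesis (i) the sequence $\big(d(\nu^{(n)})\big)_n$ is coordinatewise non-decreasing with coordinatewise supremum $m$, so each finite coordinate $a_i\le m_i$ is eventually dominated, and since $k$ is finite, monotonicity lets me choose a single index that works in all coordinates at once. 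Given this, for a morphism $(p,q)$ of $\Omega_{k,m}$ (so $p,q\in\NN^k$ with $p\le q\le m$) I would set $\omega(p,q):=\nu^{(N_q)}(p,q)$, which is legitimate because $q\le d(\nu^{(N_q)})$.

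The only step carrying genuine content is that this is well defined, i.e.\ independent of which admissible index is used. To handle this I would first upgrade hypothesis (ii) to the statement that $\nu^{(N')}\big(0,d(\nu^{(N)})\big)=\nu^{(N)}$ for all $N\le N'$, by induction on $N'$: the inductive step restricts the identity $\nu^{(N'+1)}\big(0,d(\nu^{(N')})\big)=\nu^{(N')}$ further to the initial segment of degree $d(\nu^{(N)})$, using that $d(\nu^{(N)})\le d(\nu^{(N')})$ and that iterated restrictions compose (a consequence of the factorisation property). Then, for two admissible indices $N\le N'$, restricting $\nu^{(N')}\big(0,d(\nu^{(N)})\big)=\nu^{(N)}$ once more to $(p,q)$—valid since $q\le d(\nu^{(N)})$—yields $\nu^{(N')}(p,q)=\nu^{(N)}(p,q)$, so the value of $\omega(p,q)$ does not depend on the chosen index.

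With well-definedness secured, the remaining verifications all follow from a single device: to check any finite collection of local identities I choose one index $N$ large enough (again by the observation above) that $d(\nu^{(N)})$ dominates every degree appearing, so that all relevant values of $\omega$ are computed inside the honest graph morphism $\nu^{(N)}$. Functoriality, namely $\omega(p,t)=\omega(p,q)\omega(q,t)$ together with $d(\omega(p,q))=q-p$ and the compatibility of $r$ and $s$, then transfers verbatim from the corresponding properties of $\nu^{(N)}$. The required identities $\omega\big(0,d(\nu^{(n)})\big)=\nu^{(n)}$ follow by taking $N=n$, and $d(\omega)=m$ holds by construction. Finally, uniqueness is immediate: any $\omega'\in W$ with the stated properties satisfies $\omega'\big(0,d(\nu^{(n)})\big)=\nu^{(n)}$, so restricting to $(p,q)$ with $q\le d(\nu^{(n)})$ gives $\omega'(p,q)=\nu^{(n)}(p,q)=\omega(p,q)$, whence $\omega'=\omega$. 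I expect the well-definedness argument to be the only real obstacle; once it is in place, the rest is bookkeeping made routine by the ``choose one sufficiently long $\nu^{(N)}$'' reduction.
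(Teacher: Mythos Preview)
Your proposal is correct and follows essentially the same approach as the paper: define $m=\bigvee_n d(\nu^{(n)})$, observe that for each $a\le m$ in $\NN^k$ there is an index $N_a$ with $d(\nu^{(N_a)})\ge a$, and set $\omega(p,q)=\nu^{(N_q)}(p,q)$. The paper dismisses well-definedness and the graph-morphism verification as ``routine calculations,'' whereas you have spelled these out; your account is a faithful expansion of the paper's sketch.
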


\begin{proof}[Proof of Theorem~\ref{hrg lchs}]
Fix $v \in \Lambda^0$. We follow the strategy of \cite[Theorem 2.2]{PW2005} to show $\Zz(v)$ is compact: since $\alpha$ is a homeomorphism onto its range, and since $\{ 0,1 \}^\Lambda$ is compact, it suffices to prove that $\alpha(\Zz(v))$ is closed in $\{ 0,1 \}^\Lambda$. Suppose that $(\omega^{(n)})_{n \in \NN }$ is a sequence in $\Zz(v)$ such that converging to $f \in \{0,1\}^\Lambda$. We seek $\omega \in \Zz(v)$ such that $f = \alpha(\omega)$. Define $A=\{\nu\in\Lambda : \alpha(\omega^{(n)})(\nu) \to 1 \text{ as } n \to \infty \}.$ Then $A\neq \emptyset$ since $v \in A$. Let $d(A) := \bigvee_{\nu \in A} d(\nu)$.
\setcounter{theorem}{2}
\setcounter{claim}{0}
\begin{claim}\label{claim: there exists omega such that}
There exists $\omega \in v\Lambda^{d(A)}$ such that:
\begin{itemize}
\item $d(\omega) \geq d(\mu)$ for all $\mu \in A$, and
\item $\omega(0,n) \in A$ for all $n \in \NN^k$ with  $n\leq d(A)$.
\end{itemize}
\begin{proof}
To define $\omega$ we construct a sequence of paths and apply Lemma~\ref{nested_paths}. We first show that for each pair $\mu,\nu \in A$, $\MCE(\mu,\nu)\cap A$ contains exactly one element. Fix $\mu,\nu \in A$. Then for large enough $n$, there exist $\beta^{n} \in \MCE(\mu,\nu)$ such that $\omega^{n} = \beta^{n}(\omega^{n})'.$ Since $\MCE(\mu,\nu)$ is finite, there exists $M$ such that $\omega^{n} = \beta^{M}(\omega^{n})'$ for infinitely many $n$. Define $\beta_{\mu,\nu} := \beta^{M}$. Then $\beta_{\mu,\nu} \in A$. For uniqueness, suppose that $\phi \in \MCE(\mu,\nu)\cap A$. Then for large $n$ we have
$
\beta_{\mu,\nu} = \omega^n(0,d(\mu)\vee d(\nu)) = \phi.
$

Since $A$ is countable, we can list $A = \{ \nu^{1}, \nu^{2}, \dots, \nu^{m}, \dots \}.$ Let $y^{1} := \nu^{1}$, and iteratively define $y^{n} = \beta_{y^{n-1},\nu^{n}}$. Then
$
    d(y^{n}) = d(y^{n-1}) \vee d(\nu^{n}) \geq d(y^{n-1}),
$
and $y^{n}(0,y^{n-1}) = y^{n-1}$. By Lemma~\ref{nested_paths}, there exists a unique $\omega \in W$ satisfying $d(\omega) = d(A)$ and $\omega(0,d(y^{n})) = y^{n}$ for all $n$. It then follows from \eqref{there exists Na} that $\omega(0,n) \in A$ for all $n \leq d(A)$.\pc\end{proof}
\end{claim}

To see $\alpha(\Zz(v))$ is closed, fix $\lambda \in \Lambda$. We show that $\alpha(\omega^{(n)})(\lambda) \to \alpha(\omega)(\lambda)$. If $\alpha(\omega)(\lambda) = 1$, then $\lambda = \omega(0,d(\lambda)) \in A$ by Claim~\ref{claim: there exists omega such that}, and thus $\alpha(\omega^{(n)})(\lambda) \to 1$ as $n \to \infty$. Now suppose that $\alpha(\omega)(\lambda) = 0$. If $d(\lambda) \nleq d(\omega)$, then $\lambda \notin A$ by Claim~\ref{claim: there exists omega such that}, forcing $\alpha(\omega^{(n)})(\lambda) \to 0$. Suppose that $d(\lambda) \leq d(\omega)$. Since $\omega(0,d(\lambda)) \in A$, we have $\omega^{(n)}(0,d(\lambda)) = \omega(0,d(\lambda))$ for large $n$. Then $\alpha(\omega)(\lambda) = 0$ implies that $\omega(0,d(\lambda)) \neq \lambda$. So for large enough $n$ we have $\omega^{(n)}(0,d(\lambda)) \neq \lambda$, forcing $\alpha(\omega^{(n)})(\lambda) \to 0$.
\end{proof}

\setcounter{theorem}{4}
\begin{rmk}\label{lambda leq infty not closed}
It has been shown that $\partial\Lambda$ is a closed subset of $W$~\cite[Lemma 5.12]{FMY2005}. Hence $\partial\Lambda$, with the relative topology, is a locally compact Hausdorff space. Consider the $2$-graph of Example~\ref{boundary different to leq infty}. For each $n\in\NN$, we have $\omega^n \in \Lambda^{\leq\infty}$. Notice that $\omega^n \to x \notin \Lambda^{\leq\infty}$. So $\Lambda^{\leq \infty}$ is not closed in general, and hence is not locally compact.
\end{rmk}

\section{Removing Sources}\label{hrg removing sources}

\begin{theorem}\label{finite aligndness preserved}
Let $\Lambda$ be a finitely aligned $k$-graph. Then there is a finitely aligned $k$-graph $\wt\Lambda$ with no sources, and an embedding $\iota$ of $\Lambda$ in $\wt\Lambda$. If $\Lambda$ is row-finite, then so is $\wt\Lambda$.
\end{theorem}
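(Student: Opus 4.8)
The plan is to build $\wt\Lambda$ explicitly from the boundary-path space $\partial\Lambda$, following the template of Farthing \cite{Farthing2008} as streamlined by Robertson and Sims \cite{RS2009}, and then to establish the four assertions (category/factorisation, embedding with no sources, finite alignment, and preservation of row-finiteness) in turn. For the underlying set I would take labelled paths $(x; m, n)$ with $x \in \partial\Lambda$ and $m \le n$ in $\NN^k$, and quotient by the equivalence that records only the genuine $\Lambda$-segment together with the amount of ``overshoot'' past $d(x)$: declare $(x; m, n) \sim (y; p, q)$ when $n - m = q - p$, the segments $x(m \wedge d(x), n \wedge d(x))$ and $y(p \wedge d(y), q \wedge d(y))$ coincide, and the overshoot vectors $m - (m \wedge d(x))$ and $p - (p \wedge d(y))$ agree. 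Range, source and degree then descend to the quotient in the obvious way, with $d[(x; m, n)] = n - m$, and composition is concatenation along a common underlying boundary path. That this is a well-defined countable category satisfying the factorisation property is essentially the content of the corresponding lemmas in \cite{Farthing2008, RS2009}, so I would cite those, checking only that each argument survives the replacement of $\Lambda^{\le\infty}$ by $\partial\Lambda$; unique factorisation is the most delicate of these, since it must be verified both inside $\Lambda$ and in the overshot coordinates beyond $d(x)$.

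Next I would define the embedding $\iota$ by sending $\lambda \in \Lambda$ to the class of $(\lambda z; 0, d(\lambda))$, where $z$ is any boundary path with $r(z) = s(\lambda)$; since every vertex is the range of at least one boundary path, and prepending a finite path to a boundary path again yields a boundary path, this is well defined, and it is manifestly injective and degree-preserving, so it identifies $\Lambda$ with a subgraph of $\wt\Lambda$. For the absence of sources, given a vertex $v = [(x; m)]$ and a degree $n \in \NN^k$, the class of $(x; m, m + n)$ lies in $v\wt\Lambda^n$; hence $v\wt\Lambda^n \neq \emptyset$ for every $v$ and $n$, as required.

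The heart of the proof, and the part justifying the title, is that finite alignment is preserved. Given $\wt\lambda, \wt\mu \in \wt\Lambda$ I would choose representatives $(x; m, n)$ and $(x; p, q)$ sharing a common underlying boundary path $x$ (possible precisely when a common extension can exist) and then describe $\Lmin(\wt\lambda, \wt\mu)$ in terms of data in $\Lambda$. The key structural fact is that extensions past $d(x)$ are \emph{rigid}: once a coordinate exceeds $d(x)_i$, the extending path in that direction is forced by the equivalence relation and contributes no branching. Consequently every minimal common extension in $\wt\Lambda$ is determined by a minimal common extension of the genuine $\Lambda$-segments of $\wt\lambda$ and $\wt\mu$ together with a forced rigid tail, so $\Lmin(\wt\lambda, \wt\mu)$ injects into a set assembled from finitely many $\Lmin$-sets in $\Lambda$, which is finite because $\Lambda$ is finitely aligned. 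I expect the main obstacle to be exactly this bookkeeping: tracking which coordinates have overshot $d(x)$, showing the rigid tails are genuinely unique, and confirming that distinct classes in $\Lmin(\wt\lambda, \wt\mu)$ cannot collapse onto the same $\Lambda$-datum. This is where the subtleties of non-locally-convex graphs (cf.\ Example~\ref{boundary different to leq infty}) must be handled with care, and where using $\partial\Lambda$ rather than $\Lambda^{\le\infty}$ should pay off.

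Finally, for row-finiteness, assume $v\Lambda^m$ is finite for all vertices $v$ and all $m$, and fix a vertex $v = [(x; s)]$ and a degree $m \in \NN^k$ in $\wt\Lambda$. Writing a representative $(x; s, s + m)$, each element of $v\wt\Lambda^m$ splits into a genuine $\Lambda$-segment out of the vertex $x(s \wedge d(x)) \in \Lambda^0$, of degree $m' \le m$, followed by a rigid tail in the overshot coordinates. The first is drawn from the finitely many finite sets $x(s \wedge d(x))\Lambda^{m'}$ with $m' \le m$, and the second is forced once the segment is known; summing over the finitely many admissible $m'$ then bounds $|v\wt\Lambda^m|$, giving row-finiteness of $\wt\Lambda$ and completing the proof.
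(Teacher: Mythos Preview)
Your construction of $\wt\Lambda$, the equivalence relation, the embedding $\iota$, and the verification that $\wt\Lambda$ has no sources all match the paper exactly. The differences lie in how finite alignment and row-finiteness are established.

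For finite alignment, the paper does not work with common representatives directly. Instead it introduces a projection $\pi:\wt\Lambda\to\iota(\Lambda)$ sending $[y;(m,n)]$ to $[y;(m\wedge d(y),n\wedge d(y))]$, and proves three short lemmas: $\pi$ respects cylinder containment (Lemma~\ref{faprooflem1}); $\pi(\MCE(\mu,\nu))\subset\MCE(\pi(\mu),\pi(\nu))$ (Lemma~\ref{faprooflem2}); and two paths in $\iota(\Lambda^0)\wt\Lambda$ of the same degree and the same $\pi$-image must coincide (Lemma~\ref{uniquepathextfromsmallgraph}). Given $\mu,\nu\in\wt\Lambda$, one first precomposes with any $\alpha\in\iota(\Lambda^0)\wt\Lambda r(\mu)$ so that ranges lie in $\iota(\Lambda^0)$, then the lemmas give a bijection $\MCE(\alpha\mu,\alpha\nu)\to\MCE(\pi(\alpha\mu),\pi(\alpha\nu))$, the latter being finite since $\Lambda$ is. Your rigidity-of-overshoot argument is exactly the content of Lemma~\ref{uniquepathextfromsmallgraph}, and your injection of $\Lmin(\wt\lambda,\wt\mu)$ into $\Lambda$-data is morally Lemma~\ref{faprooflem2}; the paper's packaging via $\pi$ avoids the bookkeeping you anticipate (tracking which coordinates have overshot, verifying distinct classes don't collapse) and sidesteps the need to realise $\wt\lambda,\wt\mu$ on a common boundary path. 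For row-finiteness, the paper argues the contrapositive and reduces immediately to degree $e_i$: if $[x;m]\wt\Lambda^{e_i}$ is infinite, then infinitely many classes must differ in their $\Lambda$-segment, forcing $x(m\wedge d(x))\Lambda^{e_i}$ infinite. Your direct count over all $m'\le m$ is correct but does more work than needed.
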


\begin{defn}
Define a relation $\approx$ on $V_\Lambda:= \{(x;m) : x \in \partial\Lambda, m \in \NN^k\}$ by: $(x;m)
\approx (y;p)$ if and only if
\renewcommand{\theenumi}{V\arabic{enumi}}
\begin{enumerate}
\item $x(m \wedge d(x)) = y(p \wedge d(y))$; and\label{v1}
\item $m - m\wedge d(x) = p - p \wedge d(y)$.\label{v2}
\end{enumerate}
\end{defn}

\begin{defn}
Define a relation $\sim$ on $P_\Lambda:= \{(x;(m,n)) : x \in \partial\Lambda, m\leq n \in \NN^k\}$ by:
$(x;(m,n)) \sim (y;(p,q))$ if and only if
\renewcommand{\theenumi}{P\arabic{enumi}}
\begin{enumerate}
\item $x(m \wedge d(x),n \wedge d(x)) = y(p \wedge d(y), q \wedge d(y))$;\label{p1}
\item $m - m\wedge d(x) = p - p \wedge d(y)$; and\label{p2}
\item $n-m = q-p$.\label{p3}
\end{enumerate}
\end{defn}

It is clear from their definitions that both $\approx$ and $\sim$ are equivalence relations.

\begin{lemma}\label{other version of p2}
Suppose that $(x;(m,n)) \sim (y;(p,q))$. Then $n-n\wedge d(x) = q-q \wedge d(y)$.
\begin{proof}
  It follows from \eqref{p1} and \eqref{p3} that
  \[
  n- n \wedge d(x) - (m-m\wedge d(x)) = q-q\wedge d(y) - (p - p\wedge d(y)).
  \]
  The result then follows from \eqref{p2}.
\end{proof}
\end{lemma}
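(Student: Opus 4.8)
The plan is to extract degree information from \eqref{p1} and then combine it with the purely arithmetic relations \eqref{p2} and \eqref{p3}. The relation $\sim$ is defined symmetrically in its two endpoints except that \eqref{p2} records the ``overhang'' $m - m\wedge d(x)$ only at the \emph{left} endpoint; the content of this lemma is precisely that the analogous identity holds at the \emph{right} endpoint, so the task is to transport the overhang information from $m,p$ to $n,q$.

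First I would note that, since $(x;(m,n))\in P_\Lambda$ forces $m\leq n$, we have $m\wedge d(x) \leq n\wedge d(x)$ and likewise $p\wedge d(y)\leq q\wedge d(y)$; hence the segments appearing in \eqref{p1} are genuine paths with well-defined degrees. Applying the degree functor $d$ to the equality of paths asserted by \eqref{p1} then yields
\[
    n\wedge d(x) - m\wedge d(x) = q\wedge d(y) - p\wedge d(y).
\]
This is the only place \eqref{p1} is used, and it is the crux of the argument: an equality of paths forces an equality of their degrees.

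Next I would work in $\ZZ^k$, where all subtractions are unrestricted, and simply rearrange. Subtracting the displayed degree identity from \eqref{p3}, which reads $n-m=q-p$, and regrouping gives
\[
    \big(n - n\wedge d(x)\big) - \big(m - m\wedge d(x)\big) = \big(q - q\wedge d(y)\big) - \big(p - p\wedge d(y)\big).
\]
Finally, \eqref{p2} states that $m - m\wedge d(x) = p - p\wedge d(y)$, so adding this common value back to both sides cancels the two left-endpoint overhangs and leaves $n - n\wedge d(x) = q - q\wedge d(y)$, which is the desired conclusion.

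I do not expect a serious obstacle: the whole argument is bookkeeping in the free abelian group $\ZZ^k$, and the only conceptual step is recognising that \eqref{p1} carries degree information. The one point deserving a moment's care is the well-definedness of the segments in \eqref{p1}, which rests on $m\leq n$ and guarantees that the final identity in fact lives in $\NN^k$ rather than merely in $\ZZ^k$.
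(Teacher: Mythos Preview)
Your proof is correct and follows the same route as the paper: extract the degree identity $n\wedge d(x)-m\wedge d(x)=q\wedge d(y)-p\wedge d(y)$ from \eqref{p1}, combine it with \eqref{p3} to obtain the displayed difference of overhangs, and cancel via \eqref{p2}. The paper compresses the first two steps into a single sentence, but the argument is identical.
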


Let $\wt{P_\Lambda}:= P_\Lambda / \sim$ and $\wt{V_\Lambda}:= V_\Lambda / \approx$. The class in $\wt{P_\Lambda}$ of $(x;(m,n)) \in P_\Lambda$ is denoted $[x;(m,n)]$, and similarly the class in $\wt{V_\Lambda}$ of $(x;m) \in V_\Lambda$ is denoted $[x;m]$.

To define the range and source maps, observe that if $(x;(m,n)) \sim (y;(p,q))$, then $(x;m) \approx (y;p)$ by definition, and $(x;n) \approx (y;q)$ by Lemma~\ref{other version of p2}. We define range and source maps as follows.

\begin{defn}
Define $\wt r, \wt s: \wt{P_\Lambda} \to \wt{V_\Lambda}$ by:
\[
    \wt r([x;(m,n)]) = [x;m] \qquad\text{and}\qquad \wt s([x;(m,n)]) = [x,n].
\]
\end{defn}

We now define composition. For each $m \in \NN^k$, we define the \emph{shift map} $\sigma^m:\bigcup_{n\geq m}\Lambda^n \to \Lambda$ by $\sigma^m(\lambda)(p,q) = \lambda(p+m,q+m)$.

\begin{prop}\label{need this for comp}
Suppose that $\Lambda$ is a $k$-graph and let $[x;(m,n)]$ and $[y;(p,q)]$ be elements of $\wt{P_\Lambda}$ satisfying $[x;n] = [y;p]$. Let $z:=x(0,n\wedge d(x)) \sigma^{p \wedge d(y)}y.$ Then

\begin{enumerate}
\item $z \in \partial\Lambda$;
\item $m \wedge d(x) = m \wedge d(z)$ and $n \wedge d(x) = n \wedge d(z)$;
\item $x(m\wedge d(x),n\wedge d(x)) = z(m\wedge d(z), n \wedge d(z))$ and $y(p\wedge d(y),q\wedge d(y))=
z(n \wedge d(z), (n+q-p) \wedge d(z)). $
\end{enumerate}
\begin{proof}
Part (1) follows from \cite[Lemma 5.13]{FMY2005}, and (2) and (3) can be proved as in \cite[Proposition 2.11]{Farthing2008}.
\end{proof}
\end{prop}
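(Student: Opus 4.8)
The plan is to dispatch the purely combinatorial assertions (2) and (3) by coordinatewise degree bookkeeping, and to isolate the genuine content in (1). Throughout I would use that, by the composition lemma, $z$ is the path with $z(0,n\wedge d(x))=x(0,n\wedge d(x))$ and $z(n\wedge d(x)+a,\,n\wedge d(x)+b)=y(p\wedge d(y)+a,\,p\wedge d(y)+b)$ for $0\le a\le b\le d(y)-p\wedge d(y)$; in particular $d(z)=n\wedge d(x)+d(y)-p\wedge d(y)$, and the hypothesis $[x;n]=[y;p]$ supplies \eqref{v1} and \eqref{v2}.

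The engine for (2) is the single identity $n\wedge d(z)=n\wedge d(x)$, which I would prove coordinatewise. In a coordinate $i$ with $n_i\le d(x)_i$ both sides equal $n_i$ (here $d(z)\ge n\wedge d(x)$, clear since $d(y)-p\wedge d(y)\ge 0$); in a coordinate with $n_i>d(x)_i$, relation \eqref{v2} forces $p_i>d(y)_i$ with $p_i-d(y)_i=n_i-d(x)_i$, whence $d(z)_i=d(x)_i$ and both sides equal $d(x)_i$. The second identity $m\wedge d(z)=m\wedge d(x)$ then follows formally from $m\le n$, since $m\wedge d(z)=m\wedge(n\wedge d(z))=m\wedge(n\wedge d(x))=m\wedge d(x)$. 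For (3), these identities rewrite the right-hand sides as $z(m\wedge d(x),n\wedge d(x))$ and $z(n\wedge d(x),(n+q-p)\wedge d(z))$. The first equals $x(m\wedge d(x),n\wedge d(x))$ because it is a subsegment of $z(0,n\wedge d(x))=x(0,n\wedge d(x))$. For the second I would establish the companion identity $(n+q-p)\wedge d(z)=n\wedge d(x)+q\wedge d(y)-p\wedge d(y)$ by the same two-case coordinatewise computation (using $p\le q$, so $q\wedge d(y)-p\wedge d(y)\ge 0$); the segment then matches $y(p\wedge d(y),q\wedge d(y))$ directly from the description of $z$ on its $y$-tail.

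The substance is (1), and my plan has two ingredients. First, shifts preserve boundary paths: if $w\in\partial\Lambda$ and $a\le d(w)$ then $\sigma^a w\in\partial\Lambda$, because the defining test for $\sigma^a w$ at a level $l$ is exactly the test for $w$ at level $l+a$; in particular $\sigma^{p\wedge d(y)}y\in\partial\Lambda$. Second, and this is the heart of the matter, I would prove the composition fact: if $\lambda\in\Lambda$ and $w\in\partial\Lambda$ with $r(w)=s(\lambda)$ then $\lambda w\in\partial\Lambda$. Granting this, since $z=x(0,n\wedge d(x))\cdot\sigma^{p\wedge d(y)}y$ and \eqref{v1} guarantees composability, (1) follows at once. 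The observation that reduces the full boundary-path test for $z'=\lambda w$ to a single instance is self-similarity: for any $l\le d(z')$ the tail $\sigma^l z'$ factors again as a finite path $z'(l,\,l\vee d(\lambda))$ followed by $\sigma^{\,l-l\wedge d(\lambda)}w$, i.e.\ once more a finite path composed with a boundary path. So it suffices to meet every finite exhaustive set $E$ at $r(\lambda)$ by an initial segment of $\lambda w$.

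To do this I would argue as follows. If some $\mu\in E$ is already an initial segment of $\lambda$ we are done. Otherwise I extend $E$ across $\lambda$: set $E':=\{\beta\in s(\lambda)\Lambda:\lambda\beta\in\MCE(\lambda,\mu)\text{ for some }\mu\in E\}$. A short argument shows $E'\in s(\lambda)\FE(\Lambda)$: exhaustiveness comes from taking, for $\xi\in s(\lambda)\Lambda$, a common extension $\gamma$ of some $\mu\in E$ and of $\lambda\xi$, and reading off the minimal common extension of $\lambda$ and $\mu$ sitting inside $\gamma$; finiteness uses finite alignment of $\Lambda$ to bound each $\MCE(\lambda,\mu)$. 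Applying $w\in\partial\Lambda$ at level $0$ to $E'$ yields $\beta=w(0,r')\in E'$, whence $\lambda\beta\in\MCE(\lambda,\mu)$ has $\mu$ as an initial segment and $(\lambda w)(0,d(\mu))=\mu\in E$. The main obstacle is precisely this extension step together with the bookkeeping for a general level $l$ whose coordinates lie partly before and partly beyond the cut $n\wedge d(x)$; the self-similarity observation is what keeps that bookkeeping from proliferating.
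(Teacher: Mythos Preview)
Your proposal is correct and follows the same route as the paper, which simply defers part~(1) to \cite[Lemma~5.13]{FMY2005} and parts~(2)--(3) to \cite[Proposition~2.11]{Farthing2008}; you have unpacked precisely those arguments. The coordinatewise bookkeeping for (2) and the companion identity $(n+q-p)\wedge d(z)=n\wedge d(x)+q\wedge d(y)-p\wedge d(y)$ for (3) are exactly what Farthing's proof does, and your two-ingredient plan for (1)---closure of $\partial\Lambda$ under shifts and under prefixing by finite paths, the latter proved by pushing a finite exhaustive set across $\lambda$---is the content of the cited FMY lemma. One small remark: your finiteness of $E'$ uses finite alignment of $\Lambda$, which is not stated in the proposition but is the ambient hypothesis of the section (and of \cite{FMY2005}); the paper is making the same implicit assumption.
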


Fix $[x;(m,n)],[y;(p,q)] \in \wt{P_\Lambda}$ such that $[x;n] = [y;p]$, and let $z = x(0,n\wedge d(x)) \sigma^{p \wedge d(y)}y.$  That the formula
\begin{equation}\label{comp on desource}
    [x;(m,n)] \circ [y;(p,q)] = [z;(m,n+q-p)]
\end{equation}
determines a well-defined composition follows from Proposition~\ref{need this for comp}.

Define $\id:\wt{V_\Lambda} \to \wt{P_\Lambda}$ by $\id_{[x;m]} = [x;(m,m)]$.

\begin{prop}[{\cite[Lemma 2.19]{Farthing2008}}]\label{Lambda tilde is a category}
$\wt{\Lambda}:=(\wt{V_\Lambda}, \wt{P_\Lambda}, \wt r, \wt s, \circ, \id)$ is a category.
\end{prop}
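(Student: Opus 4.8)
The plan is to verify the three category axioms that are not yet established: compatibility of $\wt r$ and $\wt s$ with composition, the left and right identity laws, and associativity. The well-definedness of $\circ$ is already granted by Proposition~\ref{need this for comp}, and the relations $\wt r(\id_{[x;m]}) = [x;m] = \wt s(\id_{[x;m]})$ are immediate from the definitions of $\id$, $\wt r$ and $\wt s$. Throughout I write a composable pair as $a = [x;(m,n)]$ and $b = [y;(p,q)]$ with $[x;n] = [y;p]$, so that by \eqref{comp on desource} we have $a\circ b = [z;(m,n+q-p)]$ with $z = x(0,n\wedge d(x))\sigma^{p\wedge d(y)}y$ and $d(z) = n\wedge d(x) + d(y) - p\wedge d(y)$.

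First I would establish source/range compatibility. To see $\wt r(a\circ b) = \wt r(a)$ I must check $(z;m)\approx(x;m)$: condition \eqref{v2} follows from Proposition~\ref{need this for comp}(2), which gives $m\wedge d(x) = m\wedge d(z)$, and condition \eqref{v1} follows by reading off the range endpoint of the first segment equality in Proposition~\ref{need this for comp}(3). Symmetrically, $\wt s(a\circ b) = [z;n+q-p]$, and I must check $(z;n+q-p)\approx(y;q)$: condition \eqref{v1} comes from the source endpoint of the second segment equality in Proposition~\ref{need this for comp}(3), while \eqref{v2} follows from a short computation with the degree formula for $d(z)$ above, comparing $(n+q-p) - (n+q-p)\wedge d(z)$ with $q - q\wedge d(y)$.

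Next the identity laws, which turn out to be clean. For the right identity, $\id_{\wt s(a)} = [x;(n,n)]$, so $a\circ\id_{\wt s(a)} = [z';(m,n)]$ with $z' = x(0,n\wedge d(x))\sigma^{n\wedge d(x)}x$; since $n\wedge d(x)\leq d(x)$, the concatenation lemma (the composition $\lambda x$ of a finite path with a path in $W$) gives $z' = x$, whence $a\circ\id_{\wt s(a)} = a$. The same argument, with $m$ in place of $n$, handles $\id_{\wt r(a)}\circ a = a$.

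The main obstacle is associativity. Given a composable triple $a=[x;(m,n)]$, $b=[y;(p,q)]$, $c=[w;(r,t)]$, both $(a\circ b)\circ c$ and $a\circ(b\circ c)$ have second coordinate $(m,N)$ with $N = n+q+t-p-r$, so the desired equality reduces to showing $(z_2;(m,N))\sim(z_4;(m,N))$, where $z_2$ and $z_4$ are the two iterated representatives produced by \eqref{comp on desource}. Here \eqref{p3} is automatic, \eqref{p2} reduces to $m\wedge d(z_2) = m\wedge d(z_4)$, and the crux is the segment equality \eqref{p1}. I would prove this by showing that each of $z_2$ and $z_4$ is a concatenation, via the shift maps, of the \emph{same} three truncated initial segments of $x$, $y$ and $w$, so that associativity of path concatenation, together with the degree bookkeeping supplied by Proposition~\ref{need this for comp}(2), forces them to agree on the segment indexed by $[m\wedge d(\cdot),\,N\wedge d(\cdot)]$. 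I expect the propagation of the various $\wedge\, d(\cdot)$ truncations through two nested shift-and-concatenate operations to be the delicate point; this is precisely the computation carried out in \cite[Lemma~2.19]{Farthing2008}, which here must be transcribed from $\Lambda^{\leq\infty}$ to the boundary-path space $\partial\Lambda$, with Proposition~\ref{need this for comp}(1) ensuring that each intermediate representative again lies in $\partial\Lambda$.
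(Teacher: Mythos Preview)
Your proposal is correct and matches what the paper does: the paper gives no proof of its own here but simply cites \cite[Lemma~2.19]{Farthing2008}, and your sketch is precisely an outline of Farthing's verification of the category axioms, adapted from $\Lambda^{\leq\infty}$ to $\partial\Lambda$ via Proposition~\ref{need this for comp}. You have correctly identified that associativity is the only nontrivial step and that Proposition~\ref{need this for comp}(1) is what guarantees the intermediate representatives remain in $\partial\Lambda$; there is nothing to add.
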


\begin{defn}
Define $\wt d:\wt \Lambda \to \NN^k$ by $\wt d(v) = \star$ for all $v \in \wt{V_\Lambda}$, and $\wt d([x;(m,n)]) = n-m$ for all $[x;(m,n)] \in \wt{P_\Lambda}.$
\end{defn}

\begin{prop}[{\cite[Theorem 2.22]{Farthing2008}}]\label{facprop}
The map $\wt d$ defined above satisfies the factorisation property. Hence with $\wt \Lambda$ as in Proposition~\ref{Lambda tilde is a category}, $(\wt\Lambda,\wt d)$ is a $k$-graph with no sources.
\end{prop}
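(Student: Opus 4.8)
The plan is to verify that $\wt d$ is a degree functor satisfying the factorisation property, after which $\wt\Lambda$ is a $k$-graph by Proposition~\ref{Lambda tilde is a category} and ``no sources'' comes almost for free. First I would check that $\wt d$ is a well-defined functor: well-definedness on morphisms is exactly condition \eqref{p3}, the identities satisfy $\wt d(\id_{[x;m]})=\wt d([x;(m,m)])=0$, and $\wt d([x;(m,n)]\circ[y;(p,q)])=(n+q-p)-m=(n-m)+(q-p)$ shows $\wt d$ respects composition. Once the factorisation property is established, $\wt\Lambda$ has no sources because for every vertex $[x;m]$ and every $N\in\NN^k$ the element $[x;(m,m+N)]$ is a path of degree $N$ with range $[x;m]$.

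For existence of factorisations, fix $\lambda=[x;(m,n)]$ with $\wt d(\lambda)=n-m=M+N$, and set $l:=m+M$, so that $m\le l\le n$. I take $\mu:=[x;(m,l)]$ and $\nu:=[x;(l,n)]$; these have degrees $M$ and $N$ and satisfy $\wt s(\mu)=[x;l]=\wt r(\nu)$, so they are composable. Writing $a:=l\wedge d(x)$, the composite is computed from $z=x(0,a)\,\sigma^a x$, which equals $x$ by uniqueness in the composition lemma for $\Lambda$ (both sides restrict to $x(0,a)$ on $[0,a]$ and to $\sigma^a x$ beyond it), so $\mu\circ\nu=[x;(m,n)]=\lambda$.

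For uniqueness, suppose $\mu'\circ\nu'=\lambda$ with $\wt d(\mu')=M$ and $\wt d(\nu')=N$; write $\mu'=[y;(p,p+M)]$ and $\nu'=[w;(r,r+N)]$, composable via $[y;p+M]=[w;r]$, and let $z'=y(0,(p+M)\wedge d(y))\,\sigma^{r\wedge d(w)}w$ be the representative produced by the composition formula. The first step is to rewrite both factors using the single boundary path $z'$: parts (2) and (3) of Proposition~\ref{need this for comp} give $p\wedge d(z')=p\wedge d(y)$, $(p+M)\wedge d(z')=(p+M)\wedge d(y)$ together with the two segment identities, and from these (together with the composability relation \eqref{v2}) one checks directly that $\mu'=[z';(p,p+M)]$ and $\nu'=[z';(p+M,p+M+N)]$, while $\lambda=[z';(p,p+M+N)]=[x;(m,n)]$. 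It then remains to show that splitting $[z';(p,p+M+N)]$ at degree $M$ matches the corresponding splitting of $[x;(m,n)]$. I would isolate this as a \emph{splitting lemma}: if $(x;(m,n))\sim(y;(p,q))$ and $m\le m'\le n$, $p\le p'\le q$ with $m'-m=p'-p$, then $(x;(m,m'))\sim(y;(p,p'))$ and $(x;(m',n))\sim(y;(p',q))$. Conditions \eqref{p2} and \eqref{p3} for the two pieces are immediate, so the content is the segment identity \eqref{p1}, which I would obtain by applying the factorisation property of $\Lambda$ to the full segment equality $x(m\wedge d(x),n\wedge d(x))=y(p\wedge d(y),q\wedge d(y))$, once the initial sub-segments are shown to have equal degree. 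Applying the lemma with $m'=m+M$, $p'=p+M$ then yields $\mu'=\mu$ and $\nu'=\nu$.

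I expect this final degree-matching to be the main obstacle. Showing that $(m'\wedge d(x))-(m\wedge d(x))=(p'\wedge d(y))-(p\wedge d(y))$ is a coordinate-by-coordinate computation that is genuinely delicate precisely in those coordinates where a boundary path has finite degree: one must combine \eqref{p2}, \eqref{p3} and Lemma~\ref{other version of p2} to pin down the relationship between $d(x)$ and $d(y)$ before the relevant minima can be compared (for instance, in a coordinate where $m_i\le d(x)_i$ but $m'_i>d(x)_i$, one uses Lemma~\ref{other version of p2} to deduce $d(y)_i=p_i-m_i+d(x)_i$ and hence $p'_i>d(y)_i$). This is the same finite-coordinate subtlety that makes $\partial\Lambda$, rather than $\Lambda^{\le\infty}$, the appropriate path space here, so I would be careful to treat those coordinates explicitly rather than reasoning as if every path were infinite.
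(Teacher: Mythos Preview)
The paper does not give its own proof of this proposition; it simply cites \cite[Theorem~2.22]{Farthing2008}, so there is no in-paper argument to compare against. Your direct proof is essentially the standard one and is correct in outline: the functor properties of $\wt d$ and the ``no sources'' claim are as easy as you say, existence of factorisations via $[x;(m,l)]$ and $[x;(l,n)]$ works exactly because the composite boundary path $z=x(0,l\wedge d(x))\,\sigma^{l\wedge d(x)}x$ is $x$ itself, and your reduction of uniqueness to a splitting lemma using Proposition~\ref{need this for comp} is the right strategy.

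One small imprecision worth flagging: you say that \eqref{p2} and \eqref{p3} for the two pieces are immediate, but \eqref{p2} for the \emph{second} piece, namely $m'-m'\wedge d(x)=p'-p'\wedge d(y)$, is not immediate --- it follows from the original \eqref{p2} together with the degree-matching identity $(m'\wedge d(x))-(m\wedge d(x))=(p'\wedge d(y))-(p\wedge d(y))$ that you correctly isolate as the main obstacle. So that identity is doing double duty (it feeds both \eqref{p1} and the second-piece \eqref{p2}); your coordinate-by-coordinate sketch using \eqref{p2}, \eqref{p3}, and Lemma~\ref{other version of p2} does establish it, but you should present it before claiming any of the conditions for the second piece.
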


\begin{example}\label{desource ex}
If we allow infinite receivers, our construction yields a different $k$-graph to Farthing's construction in \cite[\S 2]{Farthing2008}: consider the 1-graph $E$ with an infinite number of loops $f_i$ on a single vertex $v$:

\begin{center}
\begin{tikzpicture}[>=stealth,scale=0.7]
    \node (v) at (0,0) {$v$};
    \draw[ ->,loop, looseness = 10] (v) to node[auto] {$f_i$} (v);
    \draw[ ->,loop, looseness = 15, in=140, out = 40] (v) to node[auto,swap,black] {$\vdots$} (v);
\end{tikzpicture}
\end{center}

Here we have $E^{\leq \infty} = \emptyset$, so Farthing's construction yields a $1$-graph $\overline E \cong E$. Since $v$ belongs to every finite exhaustive set in $E$, we have $\partial E = E$. Furthermore $[f_j;p] = [f_i;p] = [v;p]$ for all $i,j,p \in \NN$, and
\[
    [f_j;(p,q)] = [f_i;(p,q)] = [v;(p-1,q-1)]
\]
for all $i,j,p,q$ such that $1 < p \leq q$. Thus there is exactly one path between any two of the added vertices, resulting in a head at $v$, yielding the graph illustrated below

 \begin{center}
\begin{tikzpicture}[>=stealth,scale=0.7]
    \node (v) at (0,0) {$v$};
    \draw[ ->,loop, looseness = 10] (v) to node[auto] {$f_i$} (v);
    \draw[ ->,loop, looseness = 15, in=140, out = 40] (v) to node[auto,swap,black] {$\vdots$} (v);
    \node (v1) at (2,0) {}
        edge[->] (v);
    \node (v2) at (4,0) {}
        edge[->] (v1);
    \draw[loosely dotted, thick] (5,0) -- (v2);
\end{tikzpicture}
\end{center}

It is intriguing that following Drinen and Tomforde's desingularisation, a head is also added at infinite receivers like this, and then the ranges of the edges $f_i$ are distributed along this head --- we cannot help but wonder whether this might suggest an approach to a Drinen-Tomforde desingularisation for $k$-graphs.
\end{example}

\subsection{Row-finite $1$-graphs.}
While one expects this style of desourcification to agree with adding heads to a row-finite $1$-graph as in \cite{BPRS2000}, this appears not to have been checked anywhere.

\begin{prop}Let $E$ be a row-finite directed graph and $F$ be the graph obtained by adding heads to sources, as in \cite[p4]{BPRS2000}. Let $\Lambda$ be the $1$-graph associated to $E$. Then $\wt\Lambda \cong F^*$, where $F^*$ is a the path-category of $F$.
\begin{proof}
Define $\eta': P_\Lambda \to F^*$ as follows. Fix $x \in \partial E$ and $m,n \in \NN$. Then either $x \in E^\infty$, or $x \in E^*$ and $s(x)$ is a source in $E$. If $x \in E^\infty$, define $\eta'((x;(m,n))) = x(m,n)$. For $x \in E^*$, let $\mu_x$ be the head added to $s(x)$, and define $\eta'((x;(m,n))) = (x \mu_x)(m,n)$. It is straightforward to check that $\eta'$ respects the equivalence relation $\sim$ on $P_\Lambda$. Define $\eta:\wt\Lambda \to F^*$ by $\eta([x;(m,n)]) = \eta'((x;(m,n))).$ Easy but tedious calculations show that $\eta$ is a graph morphism.

We now construct a graph morphism $\xi:F^* \to \wt\Lambda$. Let $\nu \in F^*$. To define $\xi$ we first need some preliminary notation. $\xi$ will be defined casewise, broken up as follows:
\renewcommand{\theenumi}{\roman{enumi}}
\begin{enumerate}
\item $\nu \in E^*$,
\item $r(\nu)\in E^*$ and $s(\nu) \in F^* \setminus E^*$, or
\item $r(\nu),s(\nu) \in F^* \setminus E^*$.
\end{enumerate}

If $\nu \in E^*$, fix $\alpha_\nu \in s(\nu)\partial E$. If $\nu$ has $r(\nu) \in E^*$ and $s(\nu) \in F^* \setminus E^*$, let $p_\nu = \max \{p \in \NN : \nu(0,p) \in E^*\}$. Then $\nu(p_\nu)$ is a source in $E^*$, and $\nu(0,p_\nu) \in \partial E$. If $\nu \in F^* \setminus E^*$, then $\nu$ is a segment of a head $\mu_\nu$ added to a source in $E^*$, and we let $q_\nu$ be such that $\nu = \mu_\nu(q_\nu, q_\nu + d(\mu))$.

We then define $\xi$ by
\[
\xi(\nu) =
\begin{cases}
    [\nu\alpha_\nu;(0,d(\nu))] &\text{if $\nu \in E^*$}\\
    [\nu(0,p_\nu);(0,d(\nu))] &\text{if $r(\nu)\in E^*$ and $s(\nu) \notin E^*$}\\
    [r(\mu_\nu);(q_\nu, q_\nu+d(\nu))] &\text{if $r(\nu),s(\nu) \in F^* \setminus E^*$}.
\end{cases}
\]

Again, tedious but straightforward calculations show that $\xi$ is a well-defined graph morphism, and that $\xi \circ \eta = 1_{\wt\Lambda}$ and $\eta \circ \xi = 1_{F^*}$.
\end{proof}
\end{prop}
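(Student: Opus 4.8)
The plan is to prove the isomorphism by exhibiting an explicit mutually inverse pair of degree-preserving functors $\eta\colon\wt\Lambda\to F^*$ and $\xi\colon F^*\to\wt\Lambda$. The conceptual first step is to identify $\partial E$ for a row-finite $1$-graph. I claim a boundary path is either an infinite path in $E^\infty$, or a finite path $\mu\in E^*$ whose source $s(\mu)$ is a source of $E$: if instead $s(\mu)$ received an edge then, since every $1$-graph is vacuously locally convex, Lemma~\ref{v lambda ei non empty then exh} makes $s(\mu)\Lambda^{e_1}$ a nonempty finite exhaustive set at $s(\mu)$, yet $\mu$ has no edge beyond its source to meet it, contradicting the boundary condition. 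These two kinds of boundary path match the two ways a vertex $[x;m]$ of $\wt\Lambda$ can sit relative to $F$: when $m\le d(x)$ it names a genuine vertex $x(m)$ of $E$, and when $x$ is finite with $m>d(x)$ it names a vertex added along the head $F$ attaches at the source $s(x)$.

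For $\eta$ I would use the head to turn every finite boundary path into an infinite path. Writing $\mu_v$ for the head attached at a source $v$, I set $\eta([x;(m,n)])=x(m,n)$ when $x\in E^\infty$, and $\eta([x;(m,n)])=(x\mu_{s(x)})(m,n)$ when $x\in E^*$; because $x\mu_{s(x)}$ is a genuine infinite path in $F$, the segment $(x\mu_{s(x)})(m,n)$ is a well-defined element of $F^*$ no matter how $m$ and $n$ straddle $d(x)$, which sidesteps any case-splitting. For $\xi$ I reverse this, splitting a path of $F$ according to whether it lies in $E$, crosses from $E$ into a head, or lies entirely in a head. In the first case I choose any boundary-path extension $\alpha_\nu\in s(\nu)\partial E$ and send $\nu\mapsto[\nu\alpha_\nu;(0,d(\nu))]$; in the second case I truncate $\nu$ at the last vertex $\nu(p_\nu)$ lying in $E$, which is necessarily a source and so makes $\nu(0,p_\nu)$ a boundary path, and send $\nu\mapsto[\nu(0,p_\nu);(0,d(\nu))]$; in the third case, where $\nu=\mu_\nu(q_\nu,q_\nu+d(\nu))$ is a head segment, I send $\nu\mapsto[r(\mu_\nu);(q_\nu,q_\nu+d(\nu))]$, using that the source $r(\mu_\nu)$ is itself a boundary path.

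The remaining work is verification: that $\eta$ descends from $P_\Lambda$ to $\wt{P_\Lambda}=P_\Lambda/\sim$, that both maps respect $\wt r$, $\wt s$, $\wt d$ and composition, that $\xi$ is independent of the arbitrary choices $\alpha_\nu$, and that $\xi\circ\eta=1_{\wt\Lambda}$ and $\eta\circ\xi=1_{F^*}$. The well-definedness of $\eta$ reduces to the observation that $\mu_{s(x)}$ depends only on the endpoint $s(x)$, so conditions \eqref{p1}--\eqref{p3} force equivalent representatives to determine the same segment of the extended infinite path. I expect the main obstacle to be purely bookkeeping: keeping the three-way case split defining $\xi$ consistent with the uniform formula for $\eta$ across the transition point $d(x)$ (equivalently $p_\nu$), where a path leaves $E$ and enters a head. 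Independence of $\alpha_\nu$ should be immediate once one notes that $[\nu\alpha_\nu;(0,d(\nu))]$ remembers only the first $d(\nu)$ edges of $\nu\alpha_\nu$, namely $\nu$ itself; beyond that I anticipate no hidden difficulty, only careful tracking of how the $\sim$-classes encode the head vertices.
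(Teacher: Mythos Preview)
Your proposal is correct and follows essentially the same approach as the paper: you define $\eta$ by extending each boundary path to an infinite path in $F$ via the attached head and taking segments, and you define the inverse $\xi$ by the identical three-way case split (path in $E$, path crossing into a head, path entirely in a head), with the same formulae in each case. The only difference is that you supply more explanation than the paper does---in particular the explicit identification of $\partial E$ and the remark on why the choice of $\alpha_\nu$ is immaterial---whereas the paper simply asserts these as ``tedious but straightforward.''
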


When $\Lambda$ is row-finite and locally convex, Proposition~\ref{leqsubsetpartial} implies that $\Lambda^{\leq\infty} = \partial\Lambda$. In this case our construction is essentially the same as that of Farthing \cite[\S2]{Farthing2008}, with notation adopted as in \cite{RS2009}. If $\Lambda$ is row-finite but not locally convex, then $\Lambda^{\leq\infty} \subset \partial\Lambda$ (Example~\ref{boundary different to leq infty} shows that this may be a strict containment). Thus it is reasonable to suspect that our construction could result in a larger path space than Farthing's. Interestingly, this is not the case.

\begin{prop}\label{me and cindy agree when the hrg is rf and lc}
Let $\Lambda$ be a row-finite $k$-graph. Suppose that $x\in\partial\Lambda \setminus \Lambda^{\leq\infty}$ and $m \leq n \in \NN^k$. Then there exists $y \in \Lambda^{\leq\infty}$ such that $(x;(m,n)) \sim (y;(m,n))$.
 \begin{proof}
Since $x \notin \Lambda^{\leq\infty}$, there exists $q \geq n\wedge d(x)$ and $i \leq k$ such that $q \leq d(x)$, $q_i = d(x)_i$, and $x(q)\Lambda^{e_i}\neq \emptyset$. Let
\[
    J:= \{ i \leq k: q_i = d(x)_i \text{ and } x(q)\Lambda^{e_i} \neq \emptyset\}.
\]
Since $x \in \partial\Lambda$, for each $E \in x(q)\FE(\Lambda)$ there exists $t \in \NN^k$ such that $x(q, q+t) \in E$. Since $q_i = d(x)_i$ for all $i \in J$, the set $\bigcup_{i \in J}x(q)\Lambda^{e_i}$ contains no such segments of $x$, and thus cannot be finite exhaustive. Since $\Lambda$ is row-finite, $\bigcup_{i\in J}x(q) \Lambda^{e_i}$ is finite, so $\bigcup_{i\in J}x(q) \Lambda^{e_i}$ is not exhaustive. Thus there exists $\mu \in x(q)\Lambda$ such that $\MCE(\mu,\nu) = \emptyset$ for all $\nu \in \bigcup_{i \in J}x(q)\Lambda^{e_i}$. By \cite[Lemma 2.11]{RSY2004}, $s(\mu)\Lambda^{\leq\infty} \neq \emptyset$. Let $z \in s(\mu)\Lambda^{\leq\infty}$, and define $y:=x(0,q)\mu z$. Then $y \in \Lambda^{\leq\infty}$ by \cite[Lemma 2.10]{RSY2004}.

Now we show that $(x;(m,n)) \sim (y;(m,n))$. Condition~\eqref{p3} is trivially satisfied. To see that \eqref{p1} and \eqref{p2} hold, it suffices to show that $n\wedge d(x) = n \wedge d(y)$. Firstly, let $i \in J$. If $d(\mu z)_i \neq 0$, then $(\mu z)(0, d(\mu) + e_i) \in \MCE(\mu,\nu)$ for $\nu = (\mu z)(0,e_i) \in r(\mu)\Lambda^{e_i} = x(q) \Lambda^{e_i}$, a contradiction. So for each $i \in J$, $d(\mu z)_i = 0$, and hence $d(y)_i = d(x)_i$. Now suppose that $i \notin J$. Then either $x(q)\Lambda^{e_i} = \emptyset$ or $q_i < d(x)_i$. If $x(q)\Lambda^{e_i} = \emptyset$ then $d(y)_i = d(x)_i$. So suppose that $q_i < d(x)_i$. Since $n \wedge d(x) \leq q$, it follows that $n_i < d(x)_i$ and $n_i \leq q_i \leq d(y)_i$, hence $(n \wedge d(x))_i = n_i = (n \wedge d(y))_i.$ So $n \wedge d(x) = n \wedge d(y)$.
\end{proof}
\end{prop}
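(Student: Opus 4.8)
The plan is to first unwind what $(x;(m,n)) \sim (y;(m,n))$ requires when the two position pairs coincide. Condition~\eqref{p3} is then automatic, and I claim conditions~\eqref{p1} and~\eqref{p2} both follow once I produce a $y$ that agrees with $x$ on a sufficiently long initial segment and satisfies $n \wedge d(x) = n \wedge d(y)$. Indeed, if $y(0,q) = x(0,q)$ for some $q \geq n \wedge d(x)$ and $n \wedge d(x) = n \wedge d(y)$, then a short componentwise check using $m \leq n$ yields $m \wedge d(x) = m \wedge d(y)$ as well, and the segments appearing in~\eqref{p1} then coincide because they both lie below $q$. So the whole problem reduces to building $y \in \Lambda^{\leq\infty}$ that extends a suitable truncation of $x$ and does not change $n \wedge d(x)$.

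To construct $y$, I would exploit the failure of $x \in \Lambda^{\leq\infty}$. Negating the definition of $\Lambda^{\leq\infty}$ with the candidate $n_x = n \wedge d(x)$ produces a point $q$ with $n \wedge d(x) \leq q \leq d(x)$ and an index $i$ with $q_i = d(x)_i$ but $x(q)\Lambda^{e_i} \neq \emptyset$. I collect all such stuck directions into $J := \{ i \leq k : q_i = d(x)_i \text{ and } x(q)\Lambda^{e_i} \neq \emptyset\}$. The key point is that $\bigcup_{i \in J} x(q)\Lambda^{e_i}$ is finite by row-finiteness but cannot be exhaustive: since $x$ is a boundary path, any finite exhaustive subset of $x(q)\Lambda$ must contain a segment $x(q,q+t)$ of $x$, whereas every element of this union has some degree $e_i$ with $i \in J$, which would force $d(x)_i > q_i$ and contradict $i \in J$. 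Non-exhaustiveness then gives $\mu \in x(q)\Lambda$ with $\MCE(\mu,\nu) = \emptyset$ for every $\nu$ in the union; I pick $z \in s(\mu)\Lambda^{\leq\infty}$ (nonempty by \cite[Lemma~2.11]{RSY2004}) and set $y := x(0,q)\mu z$, which lies in $\Lambda^{\leq\infty}$ by \cite[Lemma~2.10]{RSY2004}.

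The main obstacle is the final verification that $n \wedge d(x) = n \wedge d(y)$ componentwise: this is exactly where, in a non-locally-convex graph, $y$ might overshoot $x$ in a direction that $x$ has already completed, and the factorisation property is the tool that prevents this. I would argue by cases on $i$. When $i \in J$, I expect $d(\mu z)_i = 0$: first $d(\mu)_i = 0$, since otherwise $\mu(0,e_i)$ lies in the union and $\mu$ is a common extension of itself and $\mu(0,e_i)$; and then any positive $e_i$-component of $\mu z$ would let me factor out an edge $(\mu z)(0,e_i) \in x(q)\Lambda^{e_i}$ whose minimal common extension with $\mu$ is a prefix of $\mu z$, contradicting the choice of $\mu$. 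When $i \notin J$ with $x(q)\Lambda^{e_i} = \emptyset$ (which forces $d(x)_i = q_i$), the factorisation property again forbids any $e_i$-component of $\mu z$, as such a component would exhibit an edge in the empty set $x(q)\Lambda^{e_i}$. In both of these cases $d(y)_i = q_i = d(x)_i$. Finally, when $i \notin J$ with $q_i < d(x)_i$, the inequality $n \wedge d(x) \leq q$ gives $n_i \leq q_i$ and $n_i < d(x)_i$, while $d(y)_i \geq q_i \geq n_i$, so $(n \wedge d(x))_i = n_i = (n \wedge d(y))_i$ directly. Assembling the three cases gives $n \wedge d(x) = n \wedge d(y)$ and completes the proof.
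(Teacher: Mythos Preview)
Your proposal is correct and follows essentially the same route as the paper: the same choice of $q$ from the negation of $\Lambda^{\leq\infty}$, the same set $J$, the same non-exhaustiveness argument producing $\mu$, the same $y = x(0,q)\mu z$, and the same three-case verification of $n \wedge d(x) = n \wedge d(y)$. If anything, you are slightly more careful in the $i \in J$ case by first isolating $d(\mu)_i = 0$ before treating $\mu z$, and you spell out explicitly why $n \wedge d(x) = n \wedge d(y)$ together with agreement on $x(0,q)=y(0,q)$ yields \eqref{p1} and \eqref{p2}; the paper leaves both of these implicit.
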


The following result allows us to identify $\Lambda$ with a subgraph of $\wt\Lambda$.

\begin{prop}\label{iotamorph2}
Suppose that $\Lambda$ is a $k$-graph, and that $\lambda \in \Lambda$. Then $s(\lambda)\partial\Lambda \neq \emptyset$. If $x,y \in s(\lambda)\partial\Lambda$, then $\lambda x, \lambda y \in \partial\Lambda$ and $(\lambda x;(0,d(\lambda))) \sim (\lambda y ;(0,d(\lambda)))$. Moreover, there is an injective $k$-graph morphism $\iota:\Lambda \to \wt\Lambda$ such that for $\lambda \in \Lambda$
\[
\iota(\lambda) = [ \lambda x ; (0, d(\lambda)) ] \text{ for any $x \in s(\lambda)\partial\Lambda$.}
\]
\begin{proof}
By \cite[Lemma 5.15]{FMY2005}, we have $v \partial\Lambda \neq \emptyset$ for all $v \in \Lambda^0$. In
particular, we have $s(\lambda) \partial\Lambda \neq \emptyset$. Let $x,y \in
s(\lambda)\partial\Lambda$. Then \cite[Lemma 5.13(ii)]{FMY2005} says that $\lambda x, \lambda y \in
\partial\Lambda$. It follows from the definition of $\sim$ that $(\lambda x;(0,d(\lambda))) \sim (\lambda y ;(0,d(\lambda)))$. Then straightforward calculations show that that $\iota$ is an injective $k$-graph morphism.
\end{proof}
\end{prop}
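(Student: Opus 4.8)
The plan is to obtain the two existence statements from \cite{FMY2005} and to verify the remaining, essentially combinatorial, assertions by unwinding the definitions of $\sim$, $\approx$, and the composition formula \eqref{comp on desource}. The statement $s(\lambda)\partial\Lambda \neq \emptyset$ is the only genuinely substantive input: it is \cite[Lemma 5.15]{FMY2005}, which gives $v\partial\Lambda \neq \emptyset$ for every $v \in \Lambda^0$, applied to $v = s(\lambda)$. Given $x,y \in s(\lambda)\partial\Lambda$, the assertion that $\lambda x, \lambda y \in \partial\Lambda$ is \cite[Lemma 5.13(ii)]{FMY2005}, which I would cite directly.

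Next I would check $(\lambda x;(0,d(\lambda))) \sim (\lambda y;(0,d(\lambda)))$ against \eqref{p1}--\eqref{p3}. The key observation is that $d(\lambda x) = d(\lambda) + d(x) \geq d(\lambda)$, and likewise for $\lambda y$, so that $d(\lambda)\wedge d(\lambda x) = d(\lambda) = d(\lambda)\wedge d(\lambda y)$. Then \eqref{p1} reads $(\lambda x)(0,d(\lambda)) = \lambda = (\lambda y)(0,d(\lambda))$, while \eqref{p2} and \eqref{p3} reduce to $0=0$ and $d(\lambda)=d(\lambda)$. The same computation, run with arbitrary representatives, shows that $\iota(\lambda)$ does not depend on the choice of $x \in s(\lambda)\partial\Lambda$, so $\iota$ is well defined.

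It then remains to show $\iota$ is an injective $k$-graph morphism. Degree is immediate from $\wt d([\lambda x;(0,d(\lambda))]) = d(\lambda)$. Identifying each vertex $v$ with the class $[x;0]\in\wt{V_\Lambda}$ for $x\in v\partial\Lambda$ (well defined since any two such choices are $\approx$-related via \eqref{v1}), I would verify $\wt r(\iota(\lambda)) = [\lambda x;0] = \iota(r(\lambda))$ and $\wt s(\iota(\lambda)) = [\lambda x;d(\lambda)] = [x;0] = \iota(s(\lambda))$, where the middle equality is the relation $(\lambda x;d(\lambda))\approx(x;0)$, immediate from \eqref{v1}--\eqref{v2} because $(\lambda x)(d(\lambda)) = s(\lambda) = r(x) = x(0)$. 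Injectivity then falls out of \eqref{p3} (which forces $d(\lambda)=d(\mu)$) and \eqref{p1} (which forces $\lambda=(\lambda x)(0,d(\lambda)) = (\mu y)(0,d(\mu))=\mu$).

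I expect the one step requiring real care to be compatibility with composition. For composable $\lambda,\mu$ I would first note $\wt s(\iota(\lambda)) = [x;0] = [\mu y;0] = \wt r(\iota(\mu))$, using $r(x)=s(\lambda)=r(\mu)$, so that $\iota(\lambda)\circ\iota(\mu)$ is defined, and then apply \eqref{comp on desource}. Here the intermediate path is $z = (\lambda x)(0,d(\lambda))\,\sigma^{0}(\mu y) = \lambda\mu y$, whence $\iota(\lambda)\circ\iota(\mu) = [\lambda\mu y;(0,d(\lambda)+d(\mu))] = \iota(\lambda\mu)$, the last equality using $y\in s(\mu)\partial\Lambda = s(\lambda\mu)\partial\Lambda$. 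The delicate point throughout is tracking the $\wedge d(\cdot)$ truncations so that $z$ collapses exactly to $\lambda\mu y$; this bookkeeping inside the equivalence relations is where an error is most likely to creep in.
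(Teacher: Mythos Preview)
Your proposal is correct and follows essentially the same route as the paper's proof: both invoke \cite[Lemma~5.15]{FMY2005} and \cite[Lemma~5.13(ii)]{FMY2005} for the existence statements, then reduce the remaining claims to the definitions of $\sim$ and $\approx$. The paper simply asserts that the equivalence follows from the definition of $\sim$ and that $\iota$ is an injective $k$-graph morphism by ``straightforward calculations'', whereas you have written out those calculations (the verification of \eqref{p1}--\eqref{p3}, range, source, degree, injectivity, and the composition check via \eqref{comp on desource} with $z=\lambda\mu y$) in full; all of your bookkeeping is correct.
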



We want to extend $\iota$ to an injection of $W_\Lambda$ into $W_{\wt\Lambda}$. The next proposition shows that any injective $k$-graph morphism defined on $\Lambda$ can be extended to $W_\Lambda$.

\begin{prop}\label{extend kgm to nonfinite paths}
Let $\Lambda,\Gamma$ be $k$-graphs and $\phi:\Lambda \to \Gamma$ be a $k$-graph morphism. Let $x \in
W_\Lambda \setminus \Lambda$, then $\phi(x):\Omega_{k,d(x)} \to W_\Gamma$ defined by $\phi(x)(p,q) =
\phi(x(p,q))$ belongs to $W_\Gamma$.
\begin{proof}
Follows from $\phi$ being a $k$-graph morphism.
\end{proof}
\end{prop}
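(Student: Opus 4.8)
The plan is to unwind the definition of $W_\Gamma$ and reduce the claim to the elementary observation that a composite of two degree-preserving functors is again a degree-preserving functor. By Remark~\ref{paths are morphisms}, an element of $\Gamma^{d(x)}$ is precisely a $k$-graph morphism $\Omega_{k,d(x)} \to \Gamma$, and $\Gamma^{d(x)} \subseteq W_\Gamma$. Hence it suffices to show that $\phi(x)$ is a $k$-graph morphism from $\Omega_{k,d(x)}$ to $\Gamma$; equivalently, that $\phi(x) = \phi \circ x$ is a functor respecting the degree map.

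First I would check that $\phi(x)$ is well defined. For each morphism $(p,q)$ of $\Omega_{k,d(x)}$ we have $p \leq q$ with $p,q \in \NN^k$, so $x(p,q) \in \Lambda^{q-p}$ is a \emph{finite} path; in particular $x(p,q) \in \Mor(\Lambda) = \dom(\phi)$, so the formula $\phi(x)(p,q) = \phi(x(p,q))$ makes sense. Next, $\phi(x) = \phi \circ x$ is a composite of the functors $x:\Omega_{k,d(x)} \to \Lambda$ and $\phi:\Lambda \to \Gamma$, and is therefore itself a functor; in particular it respects range, source, identities and composition. It then remains only to verify that $\phi(x)$ respects the degree map: for a morphism $(p,q)$ of $\Omega_{k,d(x)}$, using that both $x$ and $\phi$ preserve degree we get $d(\phi(x(p,q))) = d(x(p,q)) = q-p$, which is exactly the degree of $(p,q)$ in $\Omega_{k,d(x)}$. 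Thus $\phi(x)$ is a degree-preserving functor, i.e. $\phi(x) \in \Gamma^{d(x)} \subseteq W_\Gamma$, as required.

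I do not expect a genuine obstacle here: once membership in $W_\Gamma$ is recognised as being a graph morphism, the statement is just the functoriality of composition together with the multiplicativity of the degree. The only point demanding a moment's care is that $\phi$ is \emph{a priori} defined only on the finite paths $\Lambda$, whereas $x$ is an infinite path; but this causes no difficulty, since every segment $x(p,q)$ indexed by a morphism $(p,q)$ of $\Omega_{k,d(x)}$ is finite and hence lies in the domain of $\phi$.
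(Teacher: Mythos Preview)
Your proposal is correct and is precisely the argument the paper has in mind: the one-line proof ``Follows from $\phi$ being a $k$-graph morphism'' is exactly the observation that $\phi(x) = \phi \circ x$ is a composite of degree-preserving functors, hence a degree-preserving functor $\Omega_{k,d(x)} \to \Gamma$. Your write-up simply unpacks this in full, including the minor point that each segment $x(p,q)$ is finite so lies in the domain of $\phi$.
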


In particular, we can extend $\iota$ to paths with non-finite degree. We need to know that composition works as expected for non-finite paths.

\begin{prop}
Let $\Lambda, \Gamma$ be $k$-graphs and $\phi:\Lambda \to \Gamma$ be a $k$-graph morphism. Let $\lambda
\in \Lambda$, $x \in s(\lambda)W_\Lambda$, and suppose that $n \in \NN^k$ satisfies $n \leq d(x)$. Then
\begin{enumerate}
\item $\phi(\lambda)\phi(x) = \phi(\lambda x)$; and
\item $\sigma^n(\phi(x)) = \phi(\sigma^n(x)).$
\end{enumerate}
\begin{proof} Follows from $\phi$ being a $k$-graph morphism.\end{proof}
\end{prop}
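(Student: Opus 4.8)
The plan is to read both identities as assertions that two $k$-graph morphisms out of a box $\Omega_{k,\cdot}$ coincide, and to verify each either through the uniqueness clause of the composition lemma or by evaluating both sides on an arbitrary segment. The whole proposition is bookkeeping with the definitions of composition, the shift map, and the extension of $\phi$ to non-finite paths (Proposition~\ref{extend kgm to nonfinite paths}), together with the fact that a $k$-graph morphism is a degree-preserving functor.

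For (1), I would first record that, since $\phi$ respects degrees and composition, $d(\phi(\lambda)) = d(\lambda)$, $d(\phi(x)) = d(x)$, and $r(\phi(x)) = \phi(r(x)) = \phi(s(\lambda)) = s(\phi(\lambda))$; hence the composite $\phi(\lambda)\phi(x)$ is defined, and both $\phi(\lambda)\phi(x)$ and $\phi(\lambda x)$ are $k$-graph morphisms from $\Omega_{k,d(\lambda)+d(x)}$ into $\Gamma$ (the latter by Proposition~\ref{extend kgm to nonfinite paths}). Applying the composition lemma \cite[Proposition~3.0.1.1]{WebsterPhD} inside $\Gamma$, the path $\phi(\lambda)\phi(x)$ is the \emph{unique} morphism $w$ satisfying $w(0,d(\lambda)) = \phi(\lambda)$ and $w(d(\lambda), n+d(\lambda)) = \phi(x)(0,n)$ for all $n \leq d(x)$. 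I would then check that $\phi(\lambda x)$ meets these defining relations: from the extension formula and the characterising identities of $\lambda x$, one has $\phi(\lambda x)(0,d(\lambda)) = \phi\big((\lambda x)(0,d(\lambda))\big) = \phi(\lambda)$ and $\phi(\lambda x)(d(\lambda), n+d(\lambda)) = \phi\big(x(0,n)\big) = \phi(x)(0,n)$. Uniqueness then forces $\phi(\lambda)\phi(x) = \phi(\lambda x)$.

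For (2), both $\sigma^n(\phi(x))$ and $\phi(\sigma^n(x))$ are morphisms on $\Omega_{k,d(x)-n}$ (using $n \leq d(x) = d(\phi(x))$), so it suffices to show they agree on an arbitrary segment $(p,q)$. This is a one-line computation from the definitions of the shift and the extension: $\sigma^n(\phi(x))(p,q) = \phi(x)(p+n, q+n) = \phi\big(x(p+n,q+n)\big)$, while $\phi(\sigma^n(x))(p,q) = \phi\big(\sigma^n(x)(p,q)\big) = \phi\big(x(p+n,q+n)\big)$, and the two right-hand sides coincide.

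Neither part is genuinely difficult. The only point needing care is part (1), where I would phrase the equality via the uniqueness clause of the composition lemma rather than attempting to match the two morphisms segment-by-segment directly; a direct match would require reconstructing the factorisation of a general segment of $\lambda x$ across the join point $d(\lambda)$, and invoking uniqueness sidesteps that case analysis entirely.
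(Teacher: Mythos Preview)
Your proposal is correct and is essentially a careful expansion of the paper's one-line proof, which simply reads ``Follows from $\phi$ being a $k$-graph morphism.'' Your use of the uniqueness clause of the composition lemma for part~(1) and the direct segment-by-segment check for part~(2) are exactly the routine verifications that the paper leaves to the reader.
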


\begin{rmk}
It follows that the extension of an injective $k$-graph morphism to $W_\Lambda$ is also injective. In particular, the map $\iota:\Lambda \to \wt\Lambda$ has an injective extension $\iota:W_\Lambda \to W_{\wt\Lambda}$.
\end{rmk}

We need to be able to `project' paths from $\wt\Lambda$ onto the embedding $\iota(\Lambda)$ of $\Lambda$. For $y \in \partial \Lambda$ define
\begin{equation}\label{finite path hrg proj}
\pi([y;(m,n)]) = [y;(m \wedge d(y), n \wedge d(y))].
\end{equation}
Straightforward calculations show that $\pi$ is a surjective functor, and is a projection in the sense that $\pi(\pi([y;(m,n)])) = \pi([y;(m,n)])$ for all $[y;(m,n)] \in \wt\Lambda$. In particular, $\pi|_{\iota(\Lambda)} = \id_{\iota(\Lambda)}$.

\begin{lemma}\label{faprooflem1}
Let $\Lambda$ be a $k$-graph. Suppose that $\lambda,\mu \in \wt \Lambda$, and that $\lambda \in \Zz(\mu)$. Then $\pi(\lambda) \in \Zz(\pi(\mu))$. If $d(\pi(\lambda))_i > d(\pi(\mu))_i$ for some $i\leq k$, then $d(\mu)_i = d(\pi(\mu))_i$.
\begin{proof}
Write $\lambda = [x;(m,m+d(\lambda))]$. Then $\mu = [x;(m,m+d(\mu))]$, so
\begin{align*}
\pi(\lambda) &= [x;(m\wedge d(x), (m+d(\lambda))\wedge d(x))]\text{, and}\\
\pi(\mu) &= [x;(m\wedge d(x), (m+d(\mu))\wedge d(x))].
\end{align*}
Since $d(\lambda) \geq d(\mu),$ it follows that $\pi(\lambda) \in \Zz(\pi(\mu))$.

If $d(\pi(\lambda))_i > d(\pi(\mu))_i$, then $d(x)_i > m_i + d(\mu)_i$, so
\[
    d(\pi(\mu))_i = m_i + d(\mu)_i - m_i = d(\mu)_i.\qedhere
\]
\end{proof}
\end{lemma}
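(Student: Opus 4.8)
The plan is to use the hypothesis $\lambda \in \Zz(\mu)$ to represent both paths in terms of a single boundary path, after which both assertions reduce to coordinatewise computations with the meet $\wedge$. First I would write $\lambda = [x;(m,m+d(\lambda))]$ for some $x \in \partial\Lambda$ and $m \in \NN^k$. The condition $\lambda \in \Zz(\mu)$ says exactly that $\lambda(0,d(\mu)) = \mu$, and factoring $\lambda$ at degree $d(\mu)$ via the composition formula \eqref{comp on desource} identifies the initial segment $\lambda(0,d(\mu))$ with $[x;(m,m+d(\mu))]$. Hence $\mu = [x;(m,m+d(\mu))]$, so a single representative $x$ and offset $m$ serve for both paths.

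Applying the definition \eqref{finite path hrg proj} of $\pi$ then gives
\[
\pi(\lambda) = [x;(m \wedge d(x),\, (m+d(\lambda)) \wedge d(x))] \quad\text{and}\quad \pi(\mu) = [x;(m \wedge d(x),\, (m+d(\mu)) \wedge d(x))].
\]
For the first assertion I would observe that $d(\lambda) \geq d(\mu)$ forces $(m+d(\lambda)) \wedge d(x) \geq (m+d(\mu)) \wedge d(x)$, so that $d(\pi(\lambda)) \geq d(\pi(\mu))$ and the initial segment $\pi(\lambda)(0,d(\pi(\mu)))$ is defined; a short factorisation computation shows it equals $[x;(m \wedge d(x),\, (m+d(\mu)) \wedge d(x))] = \pi(\mu)$, which is precisely the statement $\pi(\lambda) \in \Zz(\pi(\mu))$.

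For the second assertion I would fix the coordinate $i$ and unwind $d(\pi(\lambda))_i > d(\pi(\mu))_i$ to $((m+d(\lambda)) \wedge d(x))_i > ((m+d(\mu)) \wedge d(x))_i$. Since $(m+d(\lambda))_i \geq (m+d(\mu))_i$, this strict inequality can occur only if $d(x)_i > m_i + d(\mu)_i$: were $d(x)_i \leq m_i + d(\mu)_i$, both meets would equal $d(x)_i$ in coordinate $i$, contradicting strictness. Given $d(x)_i > m_i + d(\mu)_i \geq m_i$, neither $m$ nor $m+d(\mu)$ is truncated by $d(x)$ in coordinate $i$, so $d(\pi(\mu))_i = (m_i + d(\mu)_i) - m_i = d(\mu)_i$, as required.

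The one genuinely delicate point is the opening step: justifying that $\lambda$ and $\mu$ share representatives with the same $x$ and $m$. This depends on reading $\lambda \in \Zz(\mu)$ as $\lambda(0,d(\mu)) = \mu$ together with the factorisation of $[x;(m,m+d(\lambda))]$ in $\wt\Lambda$. Everything after that is routine bookkeeping with $\wedge$ taken coordinatewise.
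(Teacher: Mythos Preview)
Your proof is correct and follows exactly the same route as the paper's: choose a representative $\lambda = [x;(m,m+d(\lambda))]$, deduce $\mu = [x;(m,m+d(\mu))]$ from $\lambda \in \Zz(\mu)$, apply the definition of $\pi$, and finish with coordinatewise arithmetic on $\wedge$. You have merely supplied more of the intermediate justifications (why $\mu$ shares the same $x$ and $m$, and why the strict inequality forces $d(x)_i > m_i + d(\mu)_i$) than the paper spells out.
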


\begin{lemma}\label{faprooflem2}
Let $\Lambda$ be a $k$-graph and $\mu,\nu \in \wt\Lambda$. Then
\[
\pi(\MCE(\mu,\nu)) \subset \MCE(\pi(\mu),\pi(\nu)).
\]
\begin{proof}
Suppose that $\lambda \in \MCE(\mu,\nu)$. By Lemma~\ref{faprooflem1} we have $\pi(\lambda) \in \Zz(\pi(\mu)) \cap \Zz(\pi(\nu)),$ hence $d(\pi(\lambda)) \geq d(\pi(\mu)) \vee d(\pi(\nu))$.

It remains to prove that $d(\pi(\lambda)) = d(\pi(\mu)) \vee d(\pi(\nu))$. Suppose for a contradiction that there is some $i \leq k$ such that $d(\pi(\lambda))_i > \max\{d(\pi(\mu))_i, d(\pi(\nu))_i\}.$ By Lemma~\ref{faprooflem1} we then have $d(\pi(\mu))_i = d(\mu)_i$ and $d(\pi(\nu))_i = d(\nu)_i$. Then
$
    d(\lambda)_i \geq d(\pi(\lambda))_i > \max\{d(\mu)_i,d(\nu)_i\},
$
contradicting that $\lambda \in \MCE(\mu,\nu)$.
\end{proof}
\end{lemma}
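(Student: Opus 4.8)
The plan is to fix $\lambda \in \MCE(\mu,\nu)$ and verify that $\pi(\lambda)$ meets the two defining conditions for membership in $\MCE(\pi(\mu),\pi(\nu))$: that $\pi(\lambda)$ is a common extension of $\pi(\mu)$ and $\pi(\nu)$, and that it is minimal, i.e.\ $d(\pi(\lambda)) = d(\pi(\mu)) \vee d(\pi(\nu))$. Recall that $\lambda \in \MCE(\mu,\nu)$ means precisely that $\lambda \in \Zz(\mu) \cap \Zz(\nu)$ and $d(\lambda) = d(\mu) \vee d(\nu)$.

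For the common-extension condition I would simply apply the first part of Lemma~\ref{faprooflem1} to each of $\lambda \in \Zz(\mu)$ and $\lambda \in \Zz(\nu)$, obtaining $\pi(\lambda) \in \Zz(\pi(\mu))$ and $\pi(\lambda) \in \Zz(\pi(\nu))$. This delivers $d(\pi(\lambda)) \geq d(\pi(\mu)) \vee d(\pi(\nu))$ for free, so the only real content is the reverse inequality on degrees.

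To establish minimality I would argue coordinatewise, by contradiction. Suppose some coordinate $i \leq k$ has $d(\pi(\lambda))_i > \max\{d(\pi(\mu))_i, d(\pi(\nu))_i\}$. The two strict inequalities $d(\pi(\lambda))_i > d(\pi(\mu))_i$ and $d(\pi(\lambda))_i > d(\pi(\nu))_i$ are exactly the hypotheses needed to invoke the second clause of Lemma~\ref{faprooflem1} for $\mu$ and for $\nu$, yielding $d(\mu)_i = d(\pi(\mu))_i$ and $d(\nu)_i = d(\pi(\nu))_i$. Since $\pi$ can only shorten a path (the truncation by $\wedge\, d(x)$ never increases a degree), we have $d(\lambda)_i \geq d(\pi(\lambda))_i$, and chaining the inequalities gives $d(\lambda)_i > \max\{d(\mu)_i, d(\nu)_i\}$, contradicting the minimality $d(\lambda) = d(\mu) \vee d(\nu)$ of $\lambda$.

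The crux of the argument is the second clause of Lemma~\ref{faprooflem1}: it is the mechanism that converts a strict increase in projected degree back into the statement that $\pi$ did not truncate $\mu$ (respectively $\nu$) in that coordinate, and it is only because of this that the strict increase survives upstairs to contradict the minimality of $\lambda$. Everything around it is routine bookkeeping about how the projection interacts with coordinatewise maxima, so I expect the whole proof to be short once Lemma~\ref{faprooflem1} is in hand.
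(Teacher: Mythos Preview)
Your proposal is correct and follows exactly the same approach as the paper's proof: apply the first clause of Lemma~\ref{faprooflem1} to get $\pi(\lambda)\in\Zz(\pi(\mu))\cap\Zz(\pi(\nu))$, then argue minimality coordinatewise by contradiction, using the second clause of Lemma~\ref{faprooflem1} to force $d(\mu)_i=d(\pi(\mu))_i$ and $d(\nu)_i=d(\pi(\nu))_i$, and finally invoke $d(\lambda)_i\geq d(\pi(\lambda))_i$ to contradict $d(\lambda)=d(\mu)\vee d(\nu)$. The paper's version is slightly terser but the logic is identical.
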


\begin{lemma}\label{uniquepathextfromsmallgraph}
Let $\Lambda$ be a $k$-graph, and let $\mu,\lambda \in \iota(\Lambda^0)\wt\Lambda$ be such that $d(\lambda) = d(\mu)$ and $\pi(\lambda) = \pi(\mu)$. Then $\lambda = \mu$.
\begin{proof}
Since $\mu,\lambda \in \iota(\Lambda^0)\wt\Lambda$ and $d(\lambda) = d(\mu)$, we can write $\lambda = [x;(0,n)]$ and $\mu = [y;(0,n)]$ for some $x,y \in \partial \Lambda$ and $n \in \NN^k$. We will show that $(x;(0,n)) \sim (y;(0,n))$. Conditions \eqref{p2} and \eqref{p3} are trivially satisfied. Since
\[
    [x;(0,n \wedge d(x))] = \pi(\lambda) = \pi(\mu)= [y;(0, n \wedge d(y))],
\]
we have $(x;(0,n \wedge d(x))) \sim (y;(0, n \wedge d(y)))$. Hence $x(0 , n \wedge d(x)) = y(0 ,n \wedge d(y))$, and \eqref{p1} is satisfied.
\end{proof}
\end{lemma}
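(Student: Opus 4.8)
The plan is to reduce the claim to a direct verification of the three defining conditions \eqref{p1}--\eqref{p3} of the equivalence $\sim$, after first putting $\lambda$ and $\mu$ into a normal form in which their range coordinates vanish. Writing $\lambda = [x;(m,m+d(\lambda))]$ for some $x \in \partial\Lambda$, the hypothesis $\wt r(\lambda) = [x;m] \in \iota(\Lambda^0)$ forces, on unwinding \eqref{v2} in the definition of $\approx$, that $m \leq d(x)$. Hence $\sigma^m(x) \in \partial\Lambda$ — a direct check against Definition~\ref{defn boundary paths}, since for $E \in \sigma^m(x)(n')\FE(\Lambda) = x(m+n')\FE(\Lambda)$ the segment witnessing the boundary condition for $x$ at $m+n'$ is also one for $\sigma^m(x)$ at $n'$ — and a short verification of \eqref{p1}--\eqref{p3} shows $(x;(m,m+d(\lambda))) \sim (\sigma^m(x);(0,d(\lambda)))$. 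Doing the same for $\mu$ and using $d(\lambda) = d(\mu) =: n$, I may assume $\lambda = [x;(0,n)]$ and $\mu = [y;(0,n)]$ with $x,y \in \partial\Lambda$.

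It then suffices to show $(x;(0,n)) \sim (y;(0,n))$. Conditions \eqref{p2} and \eqref{p3} hold trivially, since all left coordinates are $0$ and the two degrees coincide. For \eqref{p1} I would invoke the hypothesis $\pi(\lambda) = \pi(\mu)$: by the definition \eqref{finite path hrg proj} of $\pi$ this reads
\[
[x;(0, n\wedge d(x))] = [y;(0, n \wedge d(y))],
\]
so that $(x;(0,n\wedge d(x))) \sim (y;(0,n\wedge d(y)))$, and condition \eqref{p1} for \emph{this} relation gives exactly $x(0,n\wedge d(x)) = y(0, n\wedge d(y))$. This is precisely \eqref{p1} for $(x;(0,n)) \sim (y;(0,n))$, completing the argument.

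The only point requiring care is the truncation bookkeeping: because $x$ may have infinite degree in some coordinates, the coordinates $0$ and $n$ need not lie below $d(x)$, which is exactly why \eqref{p1} and the projection $\pi$ are phrased in terms of $\cdot\wedge d(x)$. The normalization step is what makes this painless — once the range coordinate is $0$, the projection $\pi$ records precisely the truncated initial segment $x(0,n\wedge d(x))$, and matching these truncated segments is the whole content of \eqref{p1} (note the equality of segments automatically forces $n \wedge d(x) = n \wedge d(y)$). I expect the main, and only mild, obstacle to be justifying the normalization cleanly, namely confirming that $[x;m]\in\iota(\Lambda^0)$ forces $m\le d(x)$ and that the shifted representative $(\sigma^m(x);(0,d(\lambda)))$ genuinely lies in $P_\Lambda$ and is $\sim$-equivalent to the original; everything after that is a mechanical unwrapping of the definition of $\sim$.
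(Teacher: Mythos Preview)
Your proposal is correct and follows essentially the same approach as the paper's proof: normalize to representatives $\lambda = [x;(0,n)]$, $\mu = [y;(0,n)]$, then verify \eqref{p1}--\eqref{p3} directly, using $\pi(\lambda)=\pi(\mu)$ to obtain \eqref{p1}. The only difference is that you spell out the normalization step (showing $[x;m]\in\iota(\Lambda^0)$ forces $m\le d(x)$ via \eqref{v2}, hence $(x;(m,m+d(\lambda)))\sim(\sigma^m(x);(0,d(\lambda)))$), whereas the paper simply asserts that such representatives exist.
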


\begin{proof}[Proof of Theorem~\ref{finite aligndness preserved}]
The existence of $\wt\Lambda$ follows from Proposition~\ref{facprop}, and the embedding from  Proposition~\ref{iotamorph2}.

To check that $\wt\Lambda$ is finitely aligned, fix $\mu,\nu \in \wt\Lambda$, and $\alpha \in \iota(\Lambda^0)\wt\Lambda r(\mu)$. Then $|\MCE(\mu,\nu)| = |\MCE(\alpha\mu,\alpha\nu)|$. Since $\Lambda$ is finitely aligned, $|\MCE(\pi(\alpha\mu),\pi(\alpha\nu))|$ is finite. We will show that
$
    |\MCE(\alpha\mu,\alpha\nu)| = |\MCE(\pi(\alpha\mu),\pi(\alpha\nu))|.
$

It follows from Lemma~\ref{faprooflem2} that
$
    |\MCE(\alpha\mu,\alpha\nu)| \geq |\MCE(\pi(\alpha\mu),\pi(\alpha\nu))|.
$
For the opposite inequality, suppose $\lambda, \beta$ are distinct elements of $\MCE(\alpha\mu,\alpha\nu)$. Then $d(\lambda) = d(\beta)$. Since $r(\alpha\mu), r(\alpha\nu) \in \iota(\Lambda^0)$, Lemma~\ref{uniquepathextfromsmallgraph} implies that $\pi(\lambda) \neq \pi(\beta)$. So $|\MCE(\alpha\mu,\alpha\nu)| = |\MCE(\pi(\alpha\mu),\pi(\alpha\nu))|$.

For the last part of the statement, we prove the contrapositive. Suppose that $\wt \Lambda$ is not row-finite. Let $[x;m] \in \wt\Lambda^0$ and $i \leq k$ be such that $|[x;m]\wt\Lambda^{e_i}| = \infty$. Then for each $[y;(n,n+e_i)] \in [x;m]\wt\Lambda^{e_i}$ we have $[y;n] = [x;m]$, so $[x;(m,m+e_i)] \neq [y;(n,n+e_i)]$ only if \eqref{p1} fails. That is,
\begin{equation}\label{p1fail}
x(m \wedge d(x), (m+e_i)\wedge d(x)) \neq y(n \wedge d(y), (n+e_i)\wedge d(y)).
\end{equation}
Since $|[x;m]\wt\Lambda^{e_i}| = \infty$, there are infinitely many $[y;(n,n+e_i)] \in [x;m]\wt\Lambda^{e_i}$ satisfying \eqref{p1fail}. Hence $|x(m \wedge d(x))\Lambda^{e_i}| = \infty$.
\end{proof}

\begin{rmk}\label{crucial for FE set}
Suppose that $\Lambda$ is a finitely aligned $k$-graph, that $x \in \partial\Lambda$ and that $E \subset x(0)\Lambda$. Since $\iota:\Lambda \to \iota(\Lambda)$ is a bijective $k$-graph morphism, we have $E \in x(0)\FE(\Lambda)$ if and only if $\iota(E) \in [x;0]\FE(\iota(\Lambda))$.
\end{rmk}

The following results show how sets of minimal common extensions and finite exhaustive sets in a $k$-graph $\Lambda$ relate to those in $\wt\Lambda$.

\begin{prop}[{\cite[Lemma 2.25]{Farthing2008}}]\label{fe preserved}
Suppose that $\Lambda$ is a finitely aligned $k$-graph, and that $v \in \iota(\Lambda^0)$. Then $E \in v\FE(\iota(\Lambda))$ implies that $E \in v\FE(\wt\Lambda)$.
\end{prop}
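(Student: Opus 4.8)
The plan is to prove exhaustiveness directly, since finiteness of $E$ is given. Fix $\mu\in v\wt\Lambda$ and seek $\lambda\in E$ with $\MCE(\lambda,\mu)\neq\emptyset$ in $\wt\Lambda$. Write $\mu=[x;(m,n)]$ with $x\in\partial\Lambda$. The first observation is that $\wt r(\mu)=v\in\iota(\Lambda^0)$ forces $m\leq d(x)$, so with $p:=n\wedge d(x)$ the projection $\pi(\mu)=[x;(m,p)]=\iota(x(m,p))$ is an \emph{initial segment} of $\mu$: we have $\mu=\pi(\mu)\mu'$ where $\mu'=[x;(p,n)]$ has degree $n-p$ supported on $S:=\{i:n_i>d(x)_i\}$. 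Since $\pi$ is a functor fixing $\iota(\Lambda)$ pointwise, $\pi(\mu)=\pi(\mu)\pi(\mu')$, so $\pi(\mu')$ is a vertex; thus $\mu'$ lies entirely in the part of $\wt\Lambda$ adjoined to remove sources, and it extends $x$ only in the directions $i\in S$.

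Next I would transport the problem into $\Lambda$ via $\iota$, using Remark~\ref{crucial for FE set} to regard $\iota^{-1}(E)$ as a finite exhaustive set at $x(m)$. Set $\alpha:=x(m,p)$ and form the set $F$ of all tails $\gamma(d(\alpha),d(\gamma))$ arising from $\gamma\in\MCE(\lambda,\alpha)$ with $\lambda\in\iota^{-1}(E)$. Finite alignment makes $F$ finite, and a routine restriction argument shows $F\in x(p)\FE(\Lambda)$: given $\rho\in x(p)\Lambda$, exhaustiveness applied to $\alpha\rho$ yields $\lambda$ and a common extension $\delta$ of $\lambda$ and $\alpha\rho$; truncating $\delta$ to degree $d(\lambda)\vee d(\alpha)$ gives an element of $\MCE(\lambda,\alpha)$ whose tail lies in $F$ and has a common extension with $\rho$.

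The boundary-path hypothesis now does the essential work. Since $x\in\partial\Lambda$, $p\leq d(x)$, and $F\in x(p)\FE(\Lambda)$, Definition~\ref{defn boundary paths} produces $m''$ with $x(p,m'')\in F$. Crucially $x(p,m'')$ is a genuine segment of $x$, so $m''_i=p_i=d(x)_i$ and hence $d(x(p,m''))_i=0$ for every $i\in S$. Unwinding the definition of $F$, this segment comes from some $\lambda\in E$ with $x(m,m'')\in\MCE(\iota^{-1}(\lambda),\alpha)$, and the degree identity $d(x(m,m''))=d(\iota^{-1}(\lambda))\vee d(\alpha)$ forces $d(\iota^{-1}(\lambda))_i\leq d(\alpha)_i=d(\pi(\mu))_i$ for all $i\in S$. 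In other words this $\lambda$ does \emph{not} run past the end of $x$ in any of the virtual directions. Lifting back through $\iota$, I would then check that $[x;(m,m''\vee n)]\in\MCE(\lambda,\mu)$: its degree is $(m''-m)\vee(n-m)=d(\lambda)\vee d(\mu)$, and it extends both $\iota(x(m,m''))\supseteq\lambda$ and $\mu=[x;(m,n)]$.

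The main obstacle is precisely the set $S$ of directions in which $\mu$ extends beyond $x$ into the adjoined part of $\wt\Lambda$. There a genuine edge supplied by an overshooting $\lambda$ can fail to have any common extension with the formal edge used by $\mu$, so a carelessly chosen $\lambda\in E$ need not admit a common extension with $\mu$ even though it does with $\pi(\mu)$. The boundary property of $x$ is exactly what guarantees a non-overshooting choice of $\lambda$, and this is the point at which Farthing's original argument fails for graphs that are not locally convex; replacing $\Lambda^{\leq\infty}$ by $\partial\Lambda$ is what repairs it.
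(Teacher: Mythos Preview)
The paper gives no proof of this proposition at all; it simply cites Farthing's Lemma~2.25. Your argument is a correct, self-contained proof that fills this gap, and the strategy---project $\mu$ onto $\iota(\Lambda)$, push the finite exhaustive set forward along $\alpha=\iota^{-1}(\pi(\mu))$ to obtain $F\in x(p)\FE(\Lambda)$, then invoke the boundary-path property of $x$ at $p$ to select a $\lambda\in E$ that does not overshoot in the directions $i\in S$---is exactly the right one. The degree computation showing $[x;(m,m''\vee n)]$ has degree $d(\lambda)\vee d(\mu)$ and extends both $\iota(\lambda)$ and $\mu$ goes through as you indicate (using $m+d(\lambda)\leq m''\leq d(x)$ and Remark~\ref{iota of a segment} to identify $[x;(m,m+d(\lambda))]$ with $\iota(\lambda)$).

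Two small remarks. First, you slip between $\lambda\in\iota^{-1}(E)$ and $\lambda\in E$; this is harmless under the identification but worth tidying. Second, your closing paragraph slightly overstates the situation: the error the paper flags in Farthing's work (Remark~\ref{cindys error}) is in her Theorem~2.28, not in her Lemma~2.25, and the paper evidently regards Farthing's proof of the present statement as adaptable to the $\partial\Lambda$ setting without further comment. That said, your identification of the boundary-path hypothesis as the mechanism guaranteeing a non-overshooting $\lambda$ is exactly the conceptual point, and it is genuinely what distinguishes the $\partial\Lambda$ construction from the $\Lambda^{\leq\infty}$ one.
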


\begin{lemma}\label{mce preserved}
Let $\Lambda$ be a finitely aligned $k$-graph and let $\mu,\nu \in \iota(\Lambda)$. Then $\MCE_{\iota(\Lambda)}(\mu,\nu) = \MCE_{\wt\Lambda}(\mu,\nu)$.
\begin{proof}
Since $\iota(\Lambda) \subset \wt\Lambda$, we have $\MCE_{\iota(\Lambda)}(\mu,\nu) \subset \MCE_{\wt\Lambda}(\mu,\nu)$. Suppose that $\lambda \in \MCE_{\wt\Lambda}(\mu,\nu)$. It suffices to show that $\lambda \in \iota(\Lambda)$. Write $\mu = [x;(0,n)], \nu = [y;(0,q)]$ and $\lambda = [z;(0,n \vee q)]$. Then $\lambda \in \Zz(\mu) \cap \Zz(\nu)$ implies that $d(z) \geq n \vee q$, hence $\lambda \in \iota(\Lambda)$.
\end{proof}
\end{lemma}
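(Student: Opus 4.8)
The plan is to prove the two inclusions separately, with essentially all the work in the reverse one. The inclusion $\MCE_{\iota(\Lambda)}(\mu,\nu) \subset \MCE_{\wt\Lambda}(\mu,\nu)$ is immediate: $\iota(\Lambda)$ is a subcategory of $\wt\Lambda$, so any common extension of $\mu,\nu$ that lies in $\iota(\Lambda)$ and whose degree equals $d(\mu)\vee d(\nu)$ is a fortiori a minimal common extension computed in $\wt\Lambda$. Thus the whole content is to show that every $\lambda\in\MCE_{\wt\Lambda}(\mu,\nu)$ already lies in $\iota(\Lambda)$; once that is established, minimality transfers back verbatim, since the degree condition $d(\lambda)=d(\mu)\vee d(\nu)$ is identical in both graphs.

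To prove membership I would first fix convenient representatives, relying on the fact that $\iota(\Lambda)$ consists exactly of the classes $[z;(0,\ell)]$ with $\ell\le d(z)$, i.e. the honest segments of boundary paths (equivalently, the paths fixed by $\pi$). Since $\mu,\nu\in\iota(\Lambda)$, write $\mu=[x;(0,n)]$ and $\nu=[y;(0,q)]$ with $x,y\in\partial\Lambda$, $n\le d(x)$ and $q\le d(y)$. For $\lambda$ itself, observe that $r(\lambda)=r(\mu)=[x;0]\in\iota(\Lambda^0)$; unwinding $\approx$ shows that any representative $[z;(m',m'+d(\lambda))]$ of a path with range in $\iota(\Lambda^0)$ satisfies $m'\le d(z)$, so after applying the shift $\sigma^{m'}$ I may take $\lambda=[z;(0,n\vee q)]$ with $z\in\partial\Lambda$ and $d(\lambda)=n\vee q$.

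The crux is then the degree estimate $n\vee q\le d(z)$, which is precisely the condition placing $\lambda$ back in $\iota(\Lambda)$. Here I would use that $\lambda$ is a common extension: $\lambda\in\Zz(\mu)$ forces $\lambda(0,n)=\mu$, that is $(z;(0,n))\sim(x;(0,n))$. Feeding this into Lemma~\ref{other version of p2} gives $n-n\wedge d(z)=n-n\wedge d(x)$, and since $n\le d(x)$ the right-hand side vanishes, yielding $n\le d(z)$; the symmetric argument using $\lambda\in\Zz(\nu)$ gives $q\le d(z)$, hence $n\vee q\le d(z)$. This exhibits $\lambda=[z;(0,n\vee q)]$ as an element of $\iota(\Lambda)$, completing the reverse inclusion.

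I expect the bookkeeping around representatives to be the only real obstacle: one must check that $\lambda$ admits a representative with first coordinate $0$ (which is exactly where $r(\lambda)\in\iota(\Lambda^0)$ is needed) and that the manipulations of $\sim$ and $\approx$ correctly track which coordinates are truncated by $\wedge d(\cdot)$. It is worth noting that finite alignment of $\Lambda$ plays no role in the set equality itself—it is merely the standing hypothesis ensuring $\wt\Lambda$ is defined—so no finiteness estimate enters, and the argument reduces to the combinatorics of the relation $\sim$.
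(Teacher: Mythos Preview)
Your proof is correct and follows the same approach as the paper's: write $\mu=[x;(0,n)]$, $\nu=[y;(0,q)]$, $\lambda=[z;(0,n\vee q)]$, and deduce $n\vee q\le d(z)$ from $\lambda\in\Zz(\mu)\cap\Zz(\nu)$. The paper simply asserts the implication ``$\lambda\in\Zz(\mu)\cap\Zz(\nu)$ implies $d(z)\ge n\vee q$'' in one line, whereas you spell out the mechanism---extracting $(z;(0,n))\sim(x;(0,n))$ and invoking Lemma~\ref{other version of p2} together with $n\le d(x)$---and you also justify why $\lambda$ admits a representative with first coordinate $0$; your version is a careful elaboration of the same argument rather than a different route.
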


\begin{rmk}\label{lmin preserved}
Since there is a bijection from $\Lambda^{\min}(\mu,\nu)$ onto $\MCE(\mu,\nu)$, it follows from Lemma~\ref{mce preserved} that $\wt\Lambda^{\min}(\mu,\nu) = \iota(\Lambda)^{\min}(\mu,\nu)$ for all $\mu,\nu \in \iota(\Lambda)$.
\end{rmk}

\section{Topology of Path Spaces under Desourcification}\label{hrg path spaces under desource}

We extend the projection $\pi$ defined in \eqref{finite path hrg proj} to the set of infinite paths in $\wt\Lambda$, and prove that its restriction to $\iota(\Lambda^0)\wt\Lambda^\infty$ is a homeomorphism onto $\iota(\partial\Lambda)$. For $x \in \iota(\Lambda^0)\wt \Lambda^\infty$, let
$
    p_x = \bigvee \{ p \in \NN^k : x(0,p) \in \iota(\Lambda)\},
$
and define $\pi(x)$ to be the composition of $x$ with the inclusion of $\Omega_{k,p_x}$ in $\Omega_{k,d(x)}$. Then $\pi(x)$ is a $k$-graph morphism. Our goal for this section is the following theorem.

\begin{theorem}\label{hrg path spaces homeo}
Let $\Lambda$ be a row-finite $k$-graph. Then $\pi:\iota(\Lambda^0)\wt \Lambda^\infty \to \iota(\partial\Lambda)$ is a homeomorphism.
\end{theorem}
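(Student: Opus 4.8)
The plan is to show that $\pi$ is a continuous bijection with continuous inverse, working throughout inside $W_{\wt\Lambda}$ with the basis of Theorem~\ref{hrg topology}; write $Y := \iota(\Lambda^0)\wt\Lambda^\infty$ and $Z := \iota(\partial\Lambda)$. The first observation I would record is that for $x \in Y$ the set $\{p \in \NN^k : x(0,p) \in \iota(\Lambda)\}$ is downward closed: if $x(0,q) \notin \iota(\Lambda)$ and $q \leq q'$, then $x(0,q)$ is an initial segment of $x(0,q')$, so $x(0,q') \in \iota(\Lambda)$ would force $x(0,q) \in \iota(\Lambda)$. Hence $x(0,p) \in \iota(\Lambda) \iff p \leq p_x$, and $\pi(x) = x(0,p_x)$ is genuinely the maximal initial segment of $x$ lying in $\iota(\Lambda)$. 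To see $\pi(x) \in Z$, write $\pi(x) = \iota(w)$ for the unique $w \in W_\Lambda$ and check $w \in \partial\Lambda$: given $n \leq d(w) = p_x$ and $E \in w(n)\FE(\Lambda)$, Remark~\ref{crucial for FE set} and Proposition~\ref{fe preserved} give $\iota(E) \in x(n)\FE(\wt\Lambda)$; since an infinite path in a $k$-graph is a boundary path \cite{FMY2005}, $\sigma^n x$ meets $\iota(E)$, say $x(n,n+m) \in \iota(E) \subset \iota(\Lambda)$, whence $x(0,n+m) \in \iota(\Lambda)$, so $n+m \leq p_x$ and $w(n,n+m) \in E$.

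For bijectivity I would exhibit an explicit inverse. Given $w \in \partial\Lambda$, define $\theta(\iota(w)) =: x$ by $x(0,p) = [w;(0,p)]$ for all $p \in \NN^k$; the segmentation identities in $\wt\Lambda$ show $x$ is a well-defined morphism $\Omega_{k,(\infty)^k} \to \wt\Lambda$ with $\wt r(x) = [w;0] = \iota(w(0)) \in \iota(\Lambda^0)$, so $x \in Y$. Since $[w;(0,p)] \in \iota(\Lambda) \iff p \leq d(w)$, we get $p_x = d(w)$ and $\pi(x) = \iota(w)$, giving surjectivity. For injectivity, suppose $\pi(x) = \pi(x') = \iota(w)$ with $x,x' \in Y$. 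By downward closedness the $\iota(\Lambda)$-part (the finite-path projection of \eqref{finite path hrg proj}) of $x(0,p)$ is $x(0, p \wedge p_x) = [w;(0, p \wedge d(w))]$, which depends only on $w$; the same holds for $x'$. As $x(0,p), x'(0,p) \in \iota(\Lambda^0)\wt\Lambda$ have equal degree and equal projection, Lemma~\ref{uniquepathextfromsmallgraph} forces $x(0,p) = x'(0,p)$ for every $p$, so $x = x'$ and $\theta = \pi^{-1}$.

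For continuity of $\pi$ I would first note that $\Zz(\beta \setminus F) \cap Z \neq \emptyset$ only if $\beta = \iota(\mu)$ with $\mu \in \Lambda$, and that deleting $\Zz(\iota(\mu)f)$ for a head edge $f$ removes nothing from $Z$; thus $Z$ has a basis of sets $\Zz(\iota(\mu)\setminus \iota(G)) \cap Z$ with finite $G \subset \bigcup_i s(\mu)\Lambda^{e_i}$, exactly the shape supplied by Theorem~\ref{hrg topology}. Because $d(\iota\mu) \leq p_x$ is automatic once $x(0,d(\iota\mu)) = \iota\mu$, one checks $\pi(x) \in \Zz(\iota\mu g) \iff x \in \Zz(\iota\mu g)$ for each relevant $g$, so $\pi^{-1}\big(\Zz(\iota\mu\setminus\iota G) \cap Z\big) = \Zz(\iota\mu\setminus\iota G) \cap Y$ is open.

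The main obstacle is continuity of $\pi^{-1} = \theta$, because a basic open $\Zz(\alpha\setminus F) \cap Y$ of $Y$ genuinely involves head paths and head edges that $\theta(\iota w)$ traverses. Writing $\alpha = [u;(0,a)]$ and $J = \{i : a_i > d(u)_i\}$ for its head directions, I would expand $\theta(\iota w) \in \Zz(\alpha)$ into the condition that $w(0, a \wedge d(w)) = u(0, a \wedge d(u))$ together with $a \wedge d(w) = a \wedge d(u)$; the latter says $d(w)_i \geq a_i$ for $i \notin J$ and $d(w)_i = d(u)_i$ for $i \in J$. The equalities $d(w)_i = d(u)_i$, and the analogous conditions coming from the edges of $F$, assert precisely that $w$ does not continue in the colour $i$, which is the open condition obtained by deleting all direction-$i$ edges, i.e. a term of the form $\Zz(\cdot \setminus (\text{single-colour edges}))$. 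Assembling these, $\theta^{-1}(\Zz(\alpha\setminus F)\cap Y)$ is a finite intersection of such sets, hence a basic open of $Z$ of the Theorem~\ref{hrg topology} form. This is the step where the precise shape of the basis --- removal of single-coloured edges $G \subset \bigcup_i s(\mu)\Lambda^{e_i}$ rather than arbitrary paths --- is exactly what is needed, which is the sense in which this basis ``is the one we want''; it also uses that $\wt\Lambda$ is row-finite (Theorem~\ref{finite aligndness preserved}) so that the edge sets, and the finite-exhaustive-set arguments above, are finite.
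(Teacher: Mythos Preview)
Your argument that $\pi$ maps into $\iota(\partial\Lambda)$, that it is a bijection with explicit inverse $\theta$, and that $\pi$ is continuous are all correct and essentially parallel the paper's Propositions~\ref{pi into partial lambda}--\ref{pi is cont} (your range argument, via $\wt\Lambda^\infty = \partial\wt\Lambda$ and Proposition~\ref{fe preserved}, is in fact cleaner than the paper's convergence argument). The injectivity step implicitly uses $\pi(x(0,p)) = x(0,p\wedge p_x)$, which follows from your downward-closedness observation but deserves a sentence; the paper packages this as Lemma~\ref{key to pi inj} and Proposition~\ref{picompatible}.

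The gap is in the continuity of $\theta$. Your analysis of $\theta(\iota w)\in\Zz(\alpha)$ is correct: with $\alpha=[u;(0,a)]$, $\lambda=u(0,a\wedge d(u))$, and $J=\{i:a_i>d(u)_i\}$, the condition is exactly $w\in\Zz(\lambda)$ together with $d(w)_i=d(\lambda)_i$ for $i\in J$, i.e.\ $w\in\Zz(\lambda\setminus\bigcup_{i\in J}s(\lambda)\Lambda^{e_i})$. But your claim that ``the analogous conditions coming from the edges of $F$'' are again no-extension conditions is not right. For $f\in F$ with $d(f)=e_j$ one must distinguish: if $\pi(f)\in\iota(\Lambda^{e_j})$ then $\theta(\iota w)\in\Zz(\alpha f)$ amounts to $w\in\Zz(\lambda\,\iota^{-1}(\pi f))$, so excluding $\Zz(\alpha f)$ simply adds $\iota^{-1}(\pi f)$ to the forbidden edge set; but if $f$ is a \emph{head} edge ($\pi(f)$ a vertex), then $\theta(\iota w)\in\Zz(\alpha f)$ is equivalent to $d(w)_j=d(\lambda)_j$, so \emph{excluding} it forces $d(w)_j>d(\lambda)_j$, i.e.\ $w$ \emph{does} continue in colour $j$. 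This is the opposite of a no-extension condition, and it turns $\theta^{-1}(\Zz(\alpha\setminus F)\cap Y)$ into a finite \emph{union} $\bigcup_g \Zz(\lambda g\setminus\cdots)$ over $g\in s(\lambda)\Lambda^{e_j}$, not a single basic set or a finite intersection of such. The conclusion (openness) survives because row-finiteness makes this union finite, but your sketch asserts the wrong structure and omits the case analysis.

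The paper sidesteps this by proving openness pointwise (Proposition~\ref{pi is open}): for each $\pi(y)$ in the image of $\Zz(\mu\setminus G)\cap Y$ it exhibits one basic neighbourhood $\Zz(\pi(\lambda)\setminus F')$ contained in that image, where $\lambda=y(0,\bigvee_{\nu\in G}d(\mu\nu))$ and $F'$ consists of all $\iota(\Lambda)$-edges at $s(\pi\lambda)$ in the colours where $d(\lambda)$ exceeds $d(\pi y)$. This avoids having to describe the full preimage and is what you should emulate, or else carry out the head-edge case analysis above explicitly.
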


We first show that the range of $\pi$ is a subset of $\iota(\partial\Lambda)$.

\begin{prop}\label{pi into partial lambda}
Let $\Lambda$ be a finitely aligned $k$-graph. Let $x \in \iota(\Lambda^0)\wt{\Lambda}^\infty$. Suppose that $\{y_n : n \in \NN^k \} \subset \partial\Lambda$ satisfy
$[y_n;(0,n)]=x(0,n).$ Then
\begin{enumerate}
\renewcommand{\theenumi}{\roman{enumi}}
    \item $\ds \lim_{n \in \NN^k} \iota(y_n) = \pi(x)$ in $W_{\wt{\Lambda}}$; and
    \item there exists $y \in \partial \Lambda$ such that $\pi(x) = \iota(y)$, and for $m,n \in \NN^k$ with $m\leq n \leq p_x$ we have $\pi(x)(m,n) = \iota(y(m,n))$.
\end{enumerate}

\begin{proof} For part (i), fix a basic open set $\Zz(\mu \setminus G) \subset W_{\wt{\Lambda}}$ containing $\pi(x)$. Fix $n \geq N := \bigvee_{\nu \in G} d(\mu\nu)$. We first show that $\iota(y_n) \in \Zz(\mu)$. Since $\pi(x) \in \Zz(\mu)$, we have $\mu \in \iota(\Lambda)$. Since $n \geq d(\mu)$, we have $[y_n;(0,d(\mu))] = \mu.$

Let $\alpha = \iota^{-1}(\mu)$ and $z \in s(\alpha)\partial\Lambda$. Then
$
[y_n;(0,d(\mu))] = \mu = [\alpha z;(0,d(\mu))],
$
and \eqref{p1} gives
$
    \iota(y_n(0,d(\mu) \wedge d(y_n))) = \iota((\alpha z)(0,d(\mu))) = \iota(\alpha) = \mu.
$
So $\iota(y_n) \in \Zz(\mu)$.

We now show that $\iota(y_n) \notin \bigcup_{\nu \in G} \Zz(\mu\nu)$. Fix $\nu \in G$. If $d(y_n) \ngeq d(\mu\nu)$, then trivially we have $\iota(y_n) \notin \Zz(\mu\nu)$. Suppose that $d(y_n) \geq d(\mu\nu)$. Since $n \geq d(\mu\nu)$, we have
\[
x(0,d(\mu\nu)) = [y_n;(0,n)](0,d(\mu\nu)) = \iota(y_n)(0,d(\mu\nu)) \in \iota(\Lambda).
\]
So $\iota(y_n)(0,d(\mu\nu)) = x(0,d(\mu\nu)) = \pi(x)(0,d(\mu\nu)) \neq \mu\nu.$

For part (ii), recall that $\iota$ is injective, then we can define $y: \Omega_{k,p_x} \to \Lambda$ by $\iota(y(m,n)) = \pi(x)(m,n)$. So $\iota(y) = \pi(x)$. To see that $y \in \partial \Lambda$, fix $m \in \NN^k$ such that $m \leq d(y)$ and fix $E \in y(m)\FE(\Lambda)$. We seek $t \in \NN^k$ such that $y(m,
m+t) \in E$. Let $p := m+ \bigvee_{\mu \in E} d(\mu)$. Then since $m \leq d(y) = p_x$
\[
[y_p;(0,m)] = x(0,m) = \pi(x)(0,m) = \iota(y(0,m)) = [y(0,m)y';(0,m)]
\]
for some $y' \in y(m)\partial\Lambda$. So $(y_p;(0,m)) \sim (y(0,m)y';(0,m))$, hence
\[
    y_p(0,m \wedge d(y_p)) = (y(0,m)y')(0,m \wedge d(y(0,m)y')) = y(0,m)
\]
by \eqref{p1}. In particular, this implies that $y_p(m) = y(m)$. Since $y_p \in \partial\Lambda$, there exists $t \in \NN^k$ such that $y_p(m,m+t) \in E$. So $m+t \leq p$, and we have
\[
    \iota(y_p(m,m+t)) = [y_p;(0,p)](m,m+t) = x(0,p)(m,m+t) = x(m,m+t).
\]
So $x(m,m+t) \in \iota(\Lambda)$, giving
\[
    \iota(y_p(m,m+t)) = x(m,m+t) = \pi(x)(m,m+t) = \iota(y(m,m+t)).
\]
Finally, injectivity of $\iota$ gives $y(m,m+t) = y_p(m,m+t) \in E.$
\end{proof}
\end{prop}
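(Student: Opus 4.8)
The plan is to treat the two parts separately, using the basis $\{\Zz(\mu\setminus G)\}$ of Theorem~\ref{hrg topology} for the convergence in (i), and the injectivity of $\iota$ together with the boundary-path property of the $y_n$ for (ii). Throughout I will repeatedly exploit that a class $[y;(0,\ell)]$ lies in $\iota(\Lambda)$ precisely when $\ell\leq d(y)$, in which case it equals $\iota(y(0,\ell))$, and that by definition $\pi(x)(0,\ell)=x(0,\ell)$ for every $\ell\leq p_x$, so that every segment of $\pi(x)$ lies in $\iota(\Lambda)$.

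For part (i), I would fix a basic open set $\Zz(\mu\setminus G)$ containing $\pi(x)$ and set $N:=\bigvee_{\nu\in G}d(\mu\nu)$, then show $\iota(y_n)\in\Zz(\mu\setminus G)$ whenever $n\geq N$. Since $\pi(x)\in\Zz(\mu)$ forces $d(\mu)\leq p_x$, the path $\mu=\pi(x)(0,d(\mu))=x(0,d(\mu))$ lies in $\iota(\Lambda)$; writing $\mu=\iota(\alpha)$ and choosing $z\in s(\alpha)\partial\Lambda$ gives $\mu=[\alpha z;(0,d(\mu))]$. From $n\geq d(\mu)$ and $[y_n;(0,n)]=x(0,n)\in\Zz(\mu)$ I obtain $[y_n;(0,d(\mu))]=\mu=[\alpha z;(0,d(\mu))]$, and \eqref{p1} then forces $y_n(0,d(\mu)\wedge d(y_n))=\alpha$; comparing degrees yields $d(y_n)\geq d(\mu)$ and $\iota(y_n)(0,d(\mu))=\iota(\alpha)=\mu$, i.e. $\iota(y_n)\in\Zz(\mu)$. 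For each $\nu\in G$: if $d(y_n)\ngeq d(\mu\nu)$ then $\iota(y_n)$ is too short to lie in $\Zz(\mu\nu)$; otherwise $[y_n;(0,d(\mu\nu))]=\iota(y_n(0,d(\mu\nu)))\in\iota(\Lambda)$ equals $x(0,d(\mu\nu))$ (as $n\geq N\geq d(\mu\nu)$), so $d(\mu\nu)\leq p_x$ and hence $\iota(y_n)(0,d(\mu\nu))=\pi(x)(0,d(\mu\nu))\neq\mu\nu$. Thus $\iota(y_n)\in\Zz(\mu\setminus G)$, establishing the limit.

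For part (ii), since $\pi(x):\Omega_{k,p_x}\to\wt\Lambda$ takes values in $\iota(\Lambda)$ and $\iota$ is injective, I would define $y$ by $\iota(y(m,n)):=\pi(x)(m,n)$ for $m\leq n\leq p_x$, obtaining a morphism $y\in W_\Lambda$ with $\iota(y)=\pi(x)$ and the asserted segment identity. It remains to verify $y\in\partial\Lambda$. Fix $m\leq d(y)=p_x$ and $E\in y(m)\FE(\Lambda)$, and set $p:=m+\bigvee_{\mu\in E}d(\mu)$. Using $x(0,m)=\pi(x)(0,m)=\iota(y(0,m))$ and a representative $y'\in y(m)\partial\Lambda$ (nonempty by Proposition~\ref{iotamorph2}), the equality $[y_p;(0,m)]=[y(0,m)y';(0,m)]$ and \eqref{p1} give $y_p(0,m)=y(0,m)$, in particular $y_p(m)=y(m)$. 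Since $y_p\in\partial\Lambda$ and $E\in y_p(m)\FE(\Lambda)$, there is $t$ with $y_p(m,m+t)\in E$; as $E\subset y(m)\Lambda$ we have $m+t\leq p$ and $d(y_p)\geq m+t$, so $\iota(y_p(m,m+t))=[y_p;(0,p)](m,m+t)=x(m,m+t)$ lies in $\iota(\Lambda)$. Composing with $x(0,m)\in\iota(\Lambda)$ shows $x(0,m+t)\in\iota(\Lambda)$, hence $m+t\leq p_x$, whence $x(m,m+t)=\pi(x)(m,m+t)=\iota(y(m,m+t))$, and injectivity of $\iota$ yields $y(m,m+t)=y_p(m,m+t)\in E$.

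I expect the main obstacle to be the boundary-path verification in part (ii): one must choose the index $p$ large enough (here $p=m+\bigvee_{\mu\in E}d(\mu)$) that $y_p$ agrees with $y$ past $y(m)$ and that the witnessing segment $y_p(m,m+t)$ produced by $y_p\in\partial\Lambda$ can be transported back to $y$. The delicate point is securing $m+t\leq p_x$, so that $\pi(x)(m,m+t)=x(m,m+t)$, which is exactly what lets injectivity of $\iota$ identify $y(m,m+t)$ with $y_p(m,m+t)$. By contrast, the degree bookkeeping in part (i) — passing between the class $[y_n;(0,\ell)]$ and the embedded path $\iota(y_n)$, and controlling $d(\mu)\wedge d(y_n)$ against $d(\mu)$ — is routine but must be carried out carefully.
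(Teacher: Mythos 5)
Your proposal is correct and follows essentially the same route as the paper's proof: the same choice of $N=\bigvee_{\nu\in G}d(\mu\nu)$, the same representative argument $\mu=[\alpha z;(0,d(\mu))]$ with \eqref{p1} for membership in $\Zz(\mu)$, the same case split on $d(y_n)\geq d(\mu\nu)$, and in part (ii) the same definition of $y$ via injectivity of $\iota$, the same index $p=m+\bigvee_{\mu\in E}d(\mu)$, and the same transport of the witnessing segment $y_p(m,m+t)$ back to $y$. Your explicit justification that $m+t\leq p_x$ (by composing $x(0,m)$ with $x(m,m+t)$ inside $\iota(\Lambda)$) merely makes precise a step the paper leaves implicit.
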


The next few lemmas ensure that our definition of $\pi$ on $\wt\Lambda^\infty$ is compatible with \eqref{finite path hrg proj} when we regard finite paths as $k$-graph morphisms. The following lemma is also crucial in showing that $\pi$ is injective on $\iota(\Lambda^0)\wt\Lambda^\infty$.

\begin{lemma}\label{key to pi inj}
Let $\Lambda$ be a finitely aligned $k$-graph. Let $x \in \iota(\Lambda^0)\wt{\Lambda}^\infty$. Suppose that $w \in \partial\Lambda$ satisfies $\pi(x) = \iota(w)$. Then $x(0,n) = [w;(0,n)]$ for all $n \in \NN^k.$
\begin{proof}
Fix $n \in \NN^k$. Let $z \in \partial\Lambda$ be such that $x(0,n) = [z;(0,n)]$. We aim to
show that $(z;(0,n)) \sim (w;(0,n))$. That \eqref{p2} and \eqref{p3} hold follows immediately from their definitions. It remains to verify condition \eqref{p1}:
\begin{equation}\label{we need to show p3}
z(0,n\wedge d(z)) = w(0,n\wedge d(\omega)).
\end{equation}
Since $\pi(x) = \iota(w)$ we have $d(w) = p_x$. Thus
\[
    [w;(0,n\wedge p_x)] = \iota(w(0,n\wedge p_x)) = x(0, n \wedge p_x) = [z;(0,n \wedge p_x)].
\]
So $(w;(0,n\wedge p_x)) \sim (z;(0,n \wedge p_x))$. It then follows from \eqref{p1} that
\begin{equation}\label{nearly done}
w(0, n\wedge p_x) = z(0, n \wedge p_x).
\end{equation}
Hence $n \wedge d(z) \geq n \wedge p_x$. Furthermore,
\[
x(0,n\wedge d(z)) = [z;(0,n\wedge d(z))] = \iota(z(0, n \wedge d(z)))\in \iota(\Lambda)
\]
implies that $n \wedge p_x \geq n \wedge d(z)$. So $n \wedge
d(z) = n \wedge p_x$, and \eqref{nearly done} becomes \eqref{we need to
show p3}, as required.
\end{proof}
\end{lemma}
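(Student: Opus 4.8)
The plan is to fix $n \in \NN^k$, write the length-$n$ initial segment $x(0,n)$ of the infinite path as $[z;(0,n)]$ for a suitable $z \in \partial\Lambda$ (the hypothesis $r(x)\in\iota(\Lambda^0)$ lets us take the source coordinate of the representative to be $0$), and then prove that $(z;(0,n)) \sim (w;(0,n))$, which is exactly the assertion $x(0,n) = [w;(0,n)]$. Conditions \eqref{p2} and \eqref{p3} are immediate from their shape when both pairs begin at $0$ and have the same length $n$, so the entire content of the lemma reduces to the single condition \eqref{p1}, namely
\[
z(0, n \wedge d(z)) = w(0, n \wedge d(w)).
\]

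The decisive step is to show that the two truncation levels coincide, that is $n \wedge d(z) = n \wedge p_x$, where I use throughout that $d(w) = p_x$ (this degree identity is forced by $\pi(x) = \iota(w)$ together with the definition of $p_x$). First I would compare the paths at the auxiliary level $n \wedge p_x$: since $n \wedge p_x \le p_x$, the projection acts as the identity on $x(0, n\wedge p_x)$, giving
\[
\iota\big(w(0, n\wedge p_x)\big) = \pi(x)(0, n\wedge p_x) = x(0, n\wedge p_x) = [z;(0, n\wedge p_x)].
\]
Reading off \eqref{p1} from the resulting equivalence $(w;(0,n\wedge p_x)) \sim (z;(0, n\wedge p_x))$ yields $w(0, n\wedge p_x) = z(0, n\wedge p_x)$; matching degrees in this equality (both sides genuinely have degree $n\wedge p_x$, as $n\wedge p_x\le d(z),d(w)$) forces $n \wedge p_x \le n \wedge d(z)$.

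For the reverse inequality I would invoke the maximality of $p_x$ directly. Restricting $x(0,n) = [z;(0,n)]$ to level $n \wedge d(z)$ gives $x(0, n\wedge d(z)) = [z;(0, n\wedge d(z))]$; since $n\wedge d(z) \le d(z)$, this class equals $\iota\big(z(0, n\wedge d(z))\big)$ (because $z$ factors as $z(0,n\wedge d(z))\,\sigma^{n\wedge d(z)}(z)$ with the tail a boundary path, and by Proposition~\ref{iotamorph2} the value of $\iota$ does not depend on the chosen boundary-path extension), so it lies in $\iota(\Lambda)$. By definition of $p_x$ as the supremum of levels $p$ with $x(0,p) \in \iota(\Lambda)$, this gives $n \wedge d(z) \le p_x$, hence $n\wedge d(z) \le n \wedge p_x$. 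Combining the two inequalities yields $n \wedge d(z) = n \wedge p_x = n \wedge d(w)$, and the level-$(n\wedge p_x)$ equality already established is then precisely the required \eqref{p1}.

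I expect the main obstacle to be the bookkeeping around the coordinatewise minima, since the relation $\sim$ only ever constrains the \emph{truncated} paths $z(0, n\wedge d(z))$ and $w(0, n\wedge d(w))$. One must check that applying \eqref{p1} at the auxiliary level $n\wedge p_x$ really returns an untruncated equality of segments (which holds because $n\wedge p_x$ sits below both $d(z)$ and $d(w)=p_x$), and that the maximality step is applied to a genuine element of $\iota(\Lambda)$ rather than merely a path whose degree happens to be finite. Once the identity $n\wedge d(z) = n\wedge p_x$ is in hand, the verification of \eqref{p1} requires no further computation.
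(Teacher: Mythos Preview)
Your proposal is correct and follows essentially the same route as the paper's proof: write $x(0,n)=[z;(0,n)]$, reduce to verifying \eqref{p1}, compare at level $n\wedge p_x$ to get $w(0,n\wedge p_x)=z(0,n\wedge p_x)$ and hence $n\wedge p_x\le n\wedge d(z)$, then use $x(0,n\wedge d(z))=\iota(z(0,n\wedge d(z)))\in\iota(\Lambda)$ together with the maximality of $p_x$ for the reverse inequality. The only wrinkle is in your parenthetical ``both sides genuinely have degree $n\wedge p_x$, as $n\wedge p_x\le d(z),d(w)$'': the inequality $n\wedge p_x\le d(z)$ is what you are deducing from the degree match, not an input to it, so phrase that step as comparing the degrees $(n\wedge p_x)\wedge d(w)=n\wedge p_x$ and $(n\wedge p_x)\wedge d(z)$ of the two sides of the \eqref{p1} identity.
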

\begin{rmk}\label{iota of a segment}
Suppose that $\Lambda$ be a finitely aligned $k$-graph, and that $y \in \partial \Lambda$ and $m,n \in \NN^k$ satisfy $m \leq n \leq d(y)$. Then
\[
    [y;(m,n)] = [\sigma^m(y);(0,n-m)] = \iota(\sigma^m(y)(0,n-m)) = \iota(y(m,n)),
\]
So $[y;(m,n)] = \iota(y(m,n)).$
\end{rmk}

The next proposition shows that our definitions of $\pi$ for finite and infinite paths are compatible:

\begin{prop}\label{picompatible}
Let $\Lambda$ be a finitely aligned $k$-graph. Suppose that $x \in \wt\Lambda^\infty$, and $m \leq n\in\NN^k$. Then $\pi(x(m,n)) =
\pi(x)(m\wedge p_x, n\wedge p_x)$.
\begin{proof}
Fix $y \in \partial\Lambda$ such that $\pi(x) = \iota(y)$. Then
\begin{align*}
\pi(x(m,n)) &= \pi([y;(m,n)] \qquad\text{by Lemma~\ref{key to pi inj}}\\
&= [y;(m\wedge p_x, n \wedge p_x)] \qquad\text{since $d(y) = p_x$}\\
&= \iota(y(m\wedge p_x, n \wedge p_x)) \qquad\text{by Remark~\ref{iota of a segment}}\\
&= \pi(x)(m\wedge p_x, n \wedge p_x) \qquad\text{by Proposition~\ref{pi into partial lambda}(ii) }. \qedhere
\end{align*}
\end{proof}
\end{prop}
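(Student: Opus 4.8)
The plan is to reduce the statement to the finite-path formula \eqref{finite path hrg proj} by realising the segment $x(m,n)$ as a class of the form $[y;(m,n)]$ for a single boundary path $y$ governing $\pi(x)$. (Here I read the hypothesis as $x \in \iota(\Lambda^0)\wt\Lambda^\infty$, since $p_x$ and $\pi(x)$ are only defined for such $x$.) First I would apply Proposition~\ref{pi into partial lambda}(ii) to obtain $y \in \partial\Lambda$ with $\pi(x) = \iota(y)$; because $\pi(x)$ is the restriction of $x$ to $\Omega_{k,p_x}$, this $y$ has $d(y) = p_x$. Both sides of the desired identity will be measured against this $y$.

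The key step is to identify the finite segments of the infinite path $x$ with classes built from $y$. Lemma~\ref{key to pi inj} gives $x(0,\ell) = [y;(0,\ell)]$ for every $\ell \in \NN^k$. Factorising $x(0,n) = x(0,m)\,x(m,n)$ in $\wt\Lambda$ and comparing with the composite $[y;(0,m)]\circ[y;(m,n)] = [y;(0,n)]$ coming from the composition rule \eqref{comp on desource}, uniqueness of factorisation yields $x(m,n) = [y;(m,n)]$. I expect this to be the step carrying the most weight: it is where the infinite-path segment $x(m,n)$ is converted into an honest class in $\wt{P_\Lambda}$, on which the finite-path definition of $\pi$ can act. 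The one thing to keep in mind is that $[y;(m,n)]$ is a legitimate element of $\wt{P_\Lambda}$ even when $m$ or $n$ exceeds $d(y) = p_x$, since $\wt{P_\Lambda}$ imposes no upper bound relating the indices to $d(y)$.

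The remainder is a chain of rewrites. Applying \eqref{finite path hrg proj} to $[y;(m,n)]$ gives $\pi(x(m,n)) = [y;(m\wedge d(y), n\wedge d(y))]$, which equals $[y;(m\wedge p_x, n\wedge p_x)]$ since $d(y) = p_x$. As $m\wedge p_x \le n\wedge p_x \le p_x = d(y)$, Remark~\ref{iota of a segment} rewrites this class as $\iota(y(m\wedge p_x, n\wedge p_x))$. Finally, Proposition~\ref{pi into partial lambda}(ii)---which asserts $\pi(x)(a,b) = \iota(y(a,b))$ whenever $a \le b \le p_x$---converts this back into $\pi(x)(m\wedge p_x, n\wedge p_x)$, giving the claimed equality. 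Throughout, the only bookkeeping to watch is that every index fed into $y(\cdot,\cdot)$ is bounded by $p_x$, which the ubiquitous $\wedge p_x$ guarantees.
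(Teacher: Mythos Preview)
Your proposal is correct and follows the same route as the paper: pick $y\in\partial\Lambda$ with $\pi(x)=\iota(y)$, identify $x(m,n)$ with $[y;(m,n)]$ via Lemma~\ref{key to pi inj}, apply the finite-path formula \eqref{finite path hrg proj}, and unwind using Remark~\ref{iota of a segment} and Proposition~\ref{pi into partial lambda}(ii). The one place you add value is in spelling out why Lemma~\ref{key to pi inj}, which literally only asserts $x(0,\ell)=[y;(0,\ell)]$, yields $x(m,n)=[y;(m,n)]$: the paper cites the lemma directly for this, whereas you correctly invoke the factorisation property to bridge the gap; your reading of the hypothesis as $x\in\iota(\Lambda^0)\wt\Lambda^\infty$ is also the intended one.
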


We can now show that $\pi$ restricts to a homeomorphism of $\iota(\Lambda^0)\wt\Lambda^\infty$ onto $\iota(\partial\Lambda)$. We first show that it is a bijection, then show it is continuous. Openness is the trickiest part, and the proof of it completes this section.

\begin{prop}\label{pi is a bij}
Let $\Lambda$ be a finitely aligned $k$-graph. Then the map $\pi:\iota(\Lambda^0)\wt \Lambda^\infty \to \iota(\partial\Lambda)$ is a bijection.
\begin{proof}
That $\pi$ is injective follows from Lemma~\ref{key to pi inj}. To see that $\pi$ is onto $\iota(\partial\Lambda)$, let $w \in \partial\Lambda$ and define $x:\Omega_k
\to \wt{\Lambda}$ by $x(p,q) = [w;(p,q)]$. Then $p_x = d(w)$, and $r(x) \in \iota(\Lambda)$. To see that $\pi(x) = \iota(w)$, fix $m,n \in \NN^k$ with $m \leq n \leq d(w)$. Then
\begin{align*}
\pi(x)(m,n) &= x(m,n) \qquad\text{by Proposition~\ref{picompatible}}\\
&= [w;(m,n)] \qquad\text{by Lemma~\ref{key to pi inj}}\\
&= \iota(w(m,n)) \qquad\text{by Remark~\ref{iota of a segment}}\\
&= \iota(w)(m,n) \qquad\text{by Proposition~\ref{extend kgm to nonfinite paths}.}
\qedhere
\end{align*}
\end{proof}
\end{prop}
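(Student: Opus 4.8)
The plan is to split the proof into injectivity and surjectivity, with essentially all of the real work living in the surjectivity half. For injectivity, I would proceed as follows. Since $\pi$ takes values in $\iota(\partial\Lambda)$ by Proposition~\ref{pi into partial lambda}(ii), if $x,x' \in \iota(\Lambda^0)\wt\Lambda^\infty$ satisfy $\pi(x) = \pi(x')$, then this common value is $\iota(w)$ for some $w \in \partial\Lambda$. I then apply Lemma~\ref{key to pi inj} to both $x$ and $x'$, obtaining $x(0,n) = [w;(0,n)] = x'(0,n)$ for every $n \in \NN^k$. Since an element of $\wt\Lambda^\infty$ is a graph morphism out of $\Omega_k$ and is therefore determined by its initial segments $x(0,n)$, I conclude $x = x'$. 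This paragraph is routine once Lemma~\ref{key to pi inj} is available.

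For surjectivity, given $w \in \partial\Lambda$ I would exhibit an explicit preimage by defining $x\colon \Omega_k \to \wt\Lambda$ by $x(p,q) = [w;(p,q)]$. The first task is to confirm that $x$ really is a $k$-graph morphism, i.e.\ an element of $\wt\Lambda^\infty$. The degree condition is immediate from the definition of $\wt d$, so the content is that $x$ respects composition: one evaluates $[w;(p,q)] \circ [w;(q,t)]$ using the composition formula~\eqref{comp on desource}. The representative that formula produces is $z = w(0,q\wedge d(w))\,\sigma^{q\wedge d(w)}(w)$, and the key observation is that since $q\wedge d(w) \leq d(w)$ this recombination returns $z = w$, whence the composite is $[w;(p,t)] = x(p,t)$ as required. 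I would then note $r(x) = [w;0] = \iota(r(w)) \in \iota(\Lambda^0)$, so that $x \in \iota(\Lambda^0)\wt\Lambda^\infty$.

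The next step is to identify $p_x$. That $p \leq d(w)$ implies $x(0,p) = [w;(0,p)] = \iota(w(0,p)) \in \iota(\Lambda)$ is exactly Remark~\ref{iota of a segment}; for the reverse implication I would show that $[w;(0,p)] \in \iota(\Lambda)$ forces $p \leq d(w)$ by unwinding $\sim$ against a representative $\iota(\gamma) = [\gamma z;(0,d(\gamma))]$, where matching degrees give $d(\gamma)=p$ and then Lemma~\ref{other version of p2}, together with \eqref{p1} and \eqref{p2}, yields $p - p\wedge d(w) = 0$. Hence $p_x = d(w)$. Finally I verify $\pi(x) = \iota(w)$ by computing, for $m \leq n \leq p_x = d(w)$,
\[
\pi(x)(m,n) = \pi(x(m,n)) = [w;(m,n)] = \iota(w(m,n)) = \iota(w)(m,n),
\]
invoking in turn Proposition~\ref{picompatible}, the definition~\eqref{finite path hrg proj} of $\pi$ on finite paths, Remark~\ref{iota of a segment}, and the definition of $\iota(w)$ from Proposition~\ref{extend kgm to nonfinite paths}. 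Since $\pi(x)$ and $\iota(w)$ are morphisms out of $\Omega_{k,d(w)}$ agreeing on every segment, they coincide.

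I expect the main obstacle to be the well-definedness of $x$ as a morphism, and specifically the shift-and-recombine identity $z = w$ which guarantees that the composition formula~\eqref{comp on desource} returns the class $[w;(p,t)]$ rather than some other representative. Once this and the $p_x = d(w)$ bookkeeping with $\sim$ are settled, everything else reduces to formal chains of the earlier lemmas.
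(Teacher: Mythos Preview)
Your proposal is correct and follows essentially the same approach as the paper: injectivity via Lemma~\ref{key to pi inj}, and surjectivity via the explicit preimage $x(p,q) = [w;(p,q)]$ with the same chain of identifications for $\pi(x) = \iota(w)$. You are more careful than the paper in spelling out why $x$ is a genuine $k$-graph morphism (the recombination $z = w(0,q\wedge d(w))\,\sigma^{q\wedge d(w)}(w) = w$) and in arguing both inequalities for $p_x = d(w)$, but these are exactly the details the paper leaves implicit, and your treatment of them is sound.
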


\begin{prop}\label{pi is cont}
Let $\Lambda$ be a finitely aligned $k$-graph. Then $\pi:\iota(\Lambda^0)\wt \Lambda^\infty \to \iota(\partial\Lambda)$ is continuous.
\begin{proof}
Fix a basic open set $\Zz(\mu \setminus G) \subset W_{\wt\Lambda}$. If $\Zz(\mu \setminus
G) \cap \iota(\partial \Lambda) = \emptyset$, then $\pi^{-1}(\Zz(\mu \setminus
G) \cap \iota(\partial \Lambda)) = \emptyset$ is open. Suppose that $\Zz(\mu \setminus
G) \cap \iota(\partial \Lambda) \neq \emptyset$, and fix $y \in \Zz(\mu \setminus
G) \cap \iota(\partial \Lambda)$. Let $F = G \cap \iota(\Lambda)$. We will show that
\begin{equation}\label{req for pi cts}
    \pi^{-1}(y) \in \Zz(\mu \setminus F) \cap \big( \wt\Lambda^\infty \cap r^{-1}(\iota(\Lambda))\big)
    \subset \pi^{-1}(\Zz(\mu \setminus G) \cap \iota(\partial \Lambda)).
\end{equation}
Since $y \in \Zz(\mu)$, it follows that $\pi^{-1}(y) \in \Zz(\mu)$. To see that $\pi^{-1}(y) \notin \bigcup_{\beta \in F} \Zz(\mu\beta)$, fix $\beta \in F$. First suppose that $d(\mu\beta) \nleq d(y)$. Then $\pi^{-1}(y)(0,d(\mu\beta)) \notin \iota(\Lambda)$. Since $\mu\beta \in \iota(\Lambda)$, we have $\pi^{-1}(y)(0,d(\mu\beta)) \neq \mu\beta$. Now suppose that $d(\mu\beta) \leq d(y)$, then
\[
    \pi^{-1}(y)(0, d(\mu\beta)) = y(0,d(\mu\beta)) \neq \mu\beta.
\]

We now show that
$
    \Zz(\mu \setminus F) \cap \iota(\Lambda^0)\wt\Lambda^\infty \subset \pi^{-1}(\Zz(\mu \setminus G) \cap \iota(\partial \Lambda)).
$
Let $z \in \Zz(\mu \setminus F)\cap\iota(\Lambda^0)\wt\Lambda^\infty$. It suffices to show that $\pi(z) \in \Zz(\mu \setminus G)$. Firstly, $\pi(z)(0,d(\mu)) = z(0,d(\mu)) = \mu \in \iota(\Lambda)$. To see that $\pi(z) \notin \bigcup_{\nu \in G}\Zz(\mu\nu)$, fix $\nu \in G$. If $d(\mu\nu) \nleq d(\pi(z))$, then trivially $\pi(z) \notin \Zz(\mu\nu)$. Suppose that $d(\mu\nu) \leq d(\pi(z))$. If $\nu \notin \iota(\Lambda)$, then $\pi(z)(0,d(\mu\nu)) \neq \mu\nu$. Otherwise, $\nu \in \iota(\Lambda)$, then $\nu \in F$ and we have
$
    \pi(z)(0,d(\mu\nu)) = z(0,d(\mu\nu)) \neq \mu\nu. 
$
\end{proof}
\end{prop}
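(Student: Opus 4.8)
The plan is to test continuity against the base of Theorem~\ref{hrg topology}. Since the sets $\Zz(\mu \setminus G)$ form a base for $W_{\wt\Lambda}$, it suffices to show that $\pi^{-1}\big(\Zz(\mu \setminus G) \cap \iota(\partial\Lambda)\big)$ is open in the subspace $\iota(\Lambda^0)\wt\Lambda^\infty$ for each such basic set. If this intersection is empty the preimage is empty, hence open, so I may assume some $y$ lies in it; then $\mu = y(0,d(\mu)) \in \iota(\Lambda)$.

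The conceptual key is that $\pi$ discards every edge of $\wt\Lambda$ outside $\iota(\Lambda)$, so the edges of $G$ that are not in $\iota(\Lambda)$ impose no constraint after projection. I therefore set $F := G \cap \iota(\Lambda)$ and aim to prove the identity
\[
    \pi^{-1}\big(\Zz(\mu \setminus G) \cap \iota(\partial\Lambda)\big) = \Zz(\mu \setminus F) \cap \iota(\Lambda^0)\wt\Lambda^\infty,
\]
whose right-hand side is open in the domain. I would establish the two inclusions separately. For $\supseteq$, take $z$ in the right-hand side; Proposition~\ref{pi into partial lambda}(ii) gives $\pi(z) \in \iota(\partial\Lambda)$, and since $z \in \Zz(\mu)$ with $\mu \in \iota(\Lambda)$ forces $d(\mu) \leq p_z$, I get $\pi(z)(0,d(\mu)) = z(0,d(\mu)) = \mu$, so $\pi(z) \in \Zz(\mu)$. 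To rule out $\pi(z) \in \Zz(\mu\nu)$ for $\nu \in G$ I split on $d(\mu\nu)$ versus $p_z = d(\pi(z))$ and on whether $\nu \in \iota(\Lambda)$: a degree overshoot $d(\mu\nu) \nleq p_z$ kills the cylinder outright; if $\nu \notin \iota(\Lambda)$ then $\mu\nu \notin \iota(\Lambda)$ whereas $\pi(z)(0,d(\mu\nu))$ lies in $\iota(\Lambda)$; and if $\nu \in F$ then $z \notin \Zz(\mu\nu)$ transfers to $\pi(z)$ since the two paths agree below $p_z$.

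For the reverse inclusion $\subseteq$ I take $z = \pi^{-1}(y)$ and verify $z \in \Zz(\mu \setminus F)$. Membership in $\Zz(\mu)$ again follows from $\mu \in \iota(\Lambda)$ together with the fact that $z$ and $\pi(z) = y$ coincide below $p_z$. For each $\beta \in F$ I compare $d(\mu\beta)$ with $d(y) = p_z$: if $d(\mu\beta) \nleq p_z$ then $z(0,d(\mu\beta))$ has already left $\iota(\Lambda)$ while $\mu\beta \in \iota(\Lambda)$, so $z \notin \Zz(\mu\beta)$; and if $d(\mu\beta) \leq p_z$ then $z(0,d(\mu\beta)) = y(0,d(\mu\beta)) \neq \mu\beta$ because $y \in \Zz(\mu \setminus G)$ and $\beta \in G$.

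I expect the main difficulty to be bookkeeping rather than insight: keeping the three-way case split straight (degree overshoot, new edge, $\iota(\Lambda)$-edge) and repeatedly using that $z$ and $\pi(z)$ agree on $[0,p_z]$ while $z$ meets edges outside $\iota(\Lambda)$ above $p_z$. The only substantive external input is Proposition~\ref{pi into partial lambda}(ii), which puts $\pi(z)$ in $\iota(\partial\Lambda)$ and matches it with $z$ segment-by-segment below $p_z$; everything else is a careful comparison of degrees.
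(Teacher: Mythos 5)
Your proposal is correct and follows essentially the same route as the paper: the same choice $F = G \cap \iota(\Lambda)$, the same comparison of $\pi^{-1}\bigl(\Zz(\mu\setminus G)\cap\iota(\partial\Lambda)\bigr)$ with $\Zz(\mu\setminus F)\cap\iota(\Lambda^0)\wt\Lambda^\infty$, and the same three-way case analysis (degree overshoot, $\nu\notin\iota(\Lambda)$, $\nu\in F$) using that $z$ and $\pi(z)$ agree below $p_z$. The only cosmetic difference is that you assert the two sets are equal, while the paper phrases the same verifications as the two containments in its display~\eqref{req for pi cts}; the underlying checks are identical.
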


\begin{prop}\label{pi is open}
Let $\Lambda$ be a row-finite $k$-graph. Then $\pi:\iota(\Lambda^0)\wt\Lambda^\infty \to
\iota(\partial\Lambda)$ is open.
\begin{proof}
Fix $\pi(y) \in \pi(\Zz(\mu \setminus G) \cap \iota(\Lambda^0)\wt\Lambda^\infty)$. Let $\omega \in \partial \Lambda$ be such that $\pi(y) = \iota(\omega)$. Define $\lambda := y(0,\bigvee_{\nu\in G} d(\mu\nu))$, and
\[
    F := \bigcup\{ s(\pi(\lambda))\iota(\Lambda^{e_i}) :d(\lambda)_i> d(\pi(y))_i\}.
\]
We claim that
\[
    \pi(y) \in \Zz(\pi(\lambda) \setminus F) \cap \iota(\partial\Lambda) \subset \pi(\Zz(\mu \setminus G) \cap \iota(\Lambda^0)\wt\Lambda^\infty).
\]

First we show that $\pi(y) \in \Zz(\pi(\lambda))$. It follows from Lemma~\ref{key to pi inj} that $\pi(\lambda) = [\omega;(0,d(\lambda) \wedge d(\omega))]$. Since $d(\omega) = d(\pi(y))$, Proposition~\ref{picompatible} implies that
\[
\pi(y)(0, d(\pi(\lambda))) = \pi(y)(0,d(\lambda) \wedge d(\omega)) = \pi(y(0,d(\lambda))) = \pi(\lambda).
\]
Now we show that $\pi(y) \notin \bigcup_{f \in F}\Zz(\pi(\lambda) f)$. Fix $f \in F$; say $d(f) = e_i$. Then by definition of $F$, $d(\lambda)_i > d(\pi(y))_i = d(\omega)_i$, and thus
\[
d(\pi(\lambda))_i = \min\{d(\lambda)_i,d(\omega)_i\} = d(\omega)_i = d(\pi(y))_i.
\]
So $d(\pi(y)) \ngeq d(\pi(\lambda)f)$, and hence $\pi(y) \notin \Zz(\pi(\lambda)f)$ as required.

Now we show that $\Zz(\pi(\lambda) \setminus F) \cap \iota(\partial\Lambda) \subset \pi(\Zz(\mu \setminus G) \cap \iota(\Lambda^0)\wt\Lambda^\infty).$ Let $\pi(\beta) \in \Zz(\pi(\lambda) \setminus F) \cap \iota(\partial\Lambda)$. We aim to show that $\beta \in \Zz(\mu \setminus G).$ Since $\Zz(\lambda) \subset \Zz(\mu \setminus G)$, it suffices to show that $\beta \in \Zz(\lambda)$. Clearly $\beta \in \Zz(\pi(\lambda) \setminus F)$. If $d(\lambda) = d(\pi(\lambda))$ then $\pi(\lambda) = \lambda$ and we are done. Suppose that $d(\lambda) > d(\pi(\lambda))$, and let $\tau = \beta(d(\pi(\lambda)), d(\lambda)).$ We know that $\beta \in \Zz(\pi(\lambda))$. We
aim to use Lemma~\ref{uniquepathextfromsmallgraph} to show that $\tau = \lambda(d(\pi(\lambda)),
d(\lambda)).$ Fix $i \leq k$ such that $d(\lambda)_i > d(\pi(\lambda))_i$. Then since $d(\pi(\lambda)) = d(\lambda) \wedge d(\omega)$, we have $d(\lambda)_i > d(\omega)_i = d(\pi(y))_i$. Now $\beta \in \Zz(\pi(\lambda) \setminus F)$ implies that $\tau(0,e_i) \notin F$. In particular, $\tau(0,e_i) \notin \iota(\Lambda).$ We claim that $d(\pi(\tau)) = 0$. Suppose, for a contradiction, that $d(\pi(\tau))_j > 0$ for some $j \leq k$. Then $\pi(\tau)(0,e_j) = \tau(0,e_j) \notin \iota(\Lambda).$ But $\pi(\tau) \in \iota(\Lambda)$ by definition of $\pi$. So we must have $d(\pi(\tau)) = 0$, which implies that
\[
    \pi(\tau) = r(\tau) = s(\pi(\lambda)) = \pi(\lambda(d(\pi(\lambda)), d(\lambda))).
\]
Now Lemma~\ref{uniquepathextfromsmallgraph} implies that $\tau = \lambda(d(\pi(\lambda)),
d(\lambda)).$ Then
\[
    \beta(0,\lambda) = \beta(0,d(\pi(\lambda)))\tau = \pi(\lambda)\lambda(d(\pi(\lambda)),d(\lambda)) = \lambda. \qedhere
\]
\end{proof}
\end{prop}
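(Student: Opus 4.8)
The plan is to verify openness on basic open sets: it suffices to show that for every basic open set $\Zz(\mu \setminus G) \cap \iota(\Lambda^0)\wt\Lambda^\infty$ and every point $\pi(y)$ in its image, there is a basic open neighbourhood of $\pi(y)$ in $\iota(\partial\Lambda)$ contained in that image. Writing $\pi(y) = \iota(\omega)$ for some $\omega \in \partial\Lambda$ (possible by Proposition~\ref{pi into partial lambda}(ii)), the first simplification I would make is to replace the complicated set $\mu \setminus G$ by a single long cylinder: setting $\lambda := y(0, \bigvee_{\nu \in G} d(\mu\nu))$, one has $\Zz(\lambda) \subset \Zz(\mu \setminus G)$, so it is enough to produce a neighbourhood whose preimage lands in $\Zz(\lambda)$.

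The delicate point is that $\pi(\lambda)$ --- the natural candidate on which to base the neighbourhood --- may have strictly smaller degree than $\lambda$ in exactly those coordinates $i$ where $\lambda$ strays beyond $\omega$ into the ``added'' part of $\wt\Lambda$. To prevent a boundary path near $\pi(\lambda)$ from lifting to an infinite path that fails to extend $\lambda$, I would block precisely these directions by setting
\[
    F := \bigcup\{ s(\pi(\lambda))\iota(\Lambda^{e_i}) : d(\lambda)_i > d(\pi(y))_i\},
\]
which is finite by row-finiteness, and propose $\Zz(\pi(\lambda) \setminus F) \cap \iota(\partial\Lambda)$ as the required neighbourhood.

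Next I would verify the two inclusions. That $\pi(y) \in \Zz(\pi(\lambda))$ follows from Lemma~\ref{key to pi inj} together with Proposition~\ref{picompatible}; that $\pi(y)$ lies in no $\Zz(\pi(\lambda)f)$ with $f \in F$ is immediate, since in each blocked coordinate $i$ one computes $d(\pi(\lambda))_i = d(\omega)_i = d(\pi(y))_i$, so $\pi(y)$ cannot extend $\pi(\lambda)$ in direction $i$. For the reverse containment, take $\pi(\beta) \in \Zz(\pi(\lambda) \setminus F) \cap \iota(\partial\Lambda)$; since $\beta \in \Zz(\pi(\lambda))$ already gives $\beta(0, d(\pi(\lambda))) = \pi(\lambda)$, the whole problem reduces to identifying the tail $\tau := \beta(d(\pi(\lambda)), d(\lambda))$ with $\lambda(d(\pi(\lambda)), d(\lambda))$.

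This identification is the main obstacle, and I expect to resolve it with Lemma~\ref{uniquepathextfromsmallgraph}: both $\tau$ and $\lambda(d(\pi(\lambda)), d(\lambda))$ have range $s(\pi(\lambda)) \in \iota(\Lambda^0)$ and the same degree, so it suffices to show their projections agree. The heart of the argument is the claim $d(\pi(\tau)) = 0$, i.e.\ that $\tau$ goes entirely into the added part. For every coordinate $i$ with $d(\tau)_i > 0$, membership $\beta \in \Zz(\pi(\lambda) \setminus F)$ forces $\tau(0, e_i) \notin F$, hence $\tau(0, e_i) \notin \iota(\Lambda)$; were $d(\pi(\tau))_j > 0$ for some $j$, the initial edge $\pi(\tau)(0, e_j) = \tau(0, e_j)$ would then be forced into $\iota(\Lambda)$ (since $\pi$ maps into $\iota(\Lambda)$) while simultaneously lying outside it --- a contradiction. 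Thus $\pi(\tau) = s(\pi(\lambda)) = \pi(\lambda(d(\pi(\lambda)), d(\lambda)))$, Lemma~\ref{uniquepathextfromsmallgraph} gives $\tau = \lambda(d(\pi(\lambda)), d(\lambda))$, and concatenating yields $\beta(0, d(\lambda)) = \pi(\lambda)\,\lambda(d(\pi(\lambda)), d(\lambda)) = \lambda$, so $\beta \in \Zz(\lambda) \subset \Zz(\mu \setminus G)$, as required.
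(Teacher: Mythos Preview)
Your proposal is correct and follows the paper's proof essentially verbatim: the same auxiliary path $\lambda = y(0,\bigvee_{\nu\in G} d(\mu\nu))$, the same blocking set $F$, the same neighbourhood $\Zz(\pi(\lambda)\setminus F)$, and the same use of Lemma~\ref{uniquepathextfromsmallgraph} via the key claim $d(\pi(\tau))=0$. The only differences are expository.
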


\begin{example}
We can see that $\pi$ is not open for non-row-finite graphs by considering the $1$-graph $E$ from Example~\ref{desource ex} with `desourcification' $\wt E$. Observe that $\Zz(\mu_1)\cap\iota(E^0)\wt\Lambda^\infty = \{\mu_1\mu_2\cdots\}$ is open in $\wt E$, and $\pi(\Zz(\mu_1)\cap\iota(E^0)\wt E^\infty) = \{v\}$. Since $\partial E = E$, any basic open set in $\partial E$ containing $v$ is of the form $\Zz(v \setminus G)$ for some finite $G \subset E^1$. Since $E^1$ is infinite, there is no finite $G \subset E^1$ such that $\Zz(v \setminus G) \subset \{v\}$. Hence $\{v\}$ is not open in $E$, and $\pi$ is not an open map.
\end{example}

\begin{proof}[Proof of Theorem~\ref{hrg path spaces homeo}]
  Propositions~\ref{pi is a bij},~\ref{pi is cont} and~\ref{pi is open} say precisely that $\pi$ is a bijection, is continuous, and is open.
\end{proof}%

\begin{rmk}
Although $\pi|_{\iota(\Lambda^0)\wt\Lambda^\infty}$ is open for all row-finite $k$-graphs, it behaves particularly well with respect to cylinder sets for locally convex $k$-graphs. The following discussion and example arose in preliminary work on a proof that $\pi$ is open when $\Lambda$ is row-finite and locally convex. We have retained this example since it helps illustrate some of the issues surrounding the map $\pi$.

Denote our standard topology for a finitely $k$-graph by $\tau_1$. The
collection $\{\Zz(\mu) : \mu \in \Lambda\}$ of cylinder sets also form a base for a topology: they cover $W_\Lambda$, and if $x \in \Zz(\lambda) \cap \Zz(\nu)$, then $x \in \Zz(x(0,d(\lambda)\vee d(\nu))) \subset \Zz(\lambda) \cap \Zz(\nu)$. This topology, denoted $\tau_2$, is not necessarily Hausdorff: we cannot separate any edge from its range: if $r(f) \in \Zz(\mu)$ then $\mu = r(f)$, and thus $f \in \Zz(\mu)$.

It may seem reasonable to expect that $\{\Zz(\mu) \cap \partial \Lambda : \mu \in \Lambda\}$ is a base for the restriction of $\tau_1$ to $\partial\Lambda$. However, this is not so. To see why, consider the $2$-graph of Example~\ref{boundary different to leq infty}. Let $y$ be the boundary path beginning with $f_0$. So $x,y \in
\partial\Lambda$. Let $\mu$ be such that $x \in \Zz(\mu)$. Then $\mu = x_0 \dots x_n$ for some $n \in
\NN$, so $y \in \Zz(\mu)$ also. So the topology $\tau_1$ is not Hausdorff even when restricted to $\partial \Lambda$. Endowed with $\tau_2$, it is easy to see how to separate these two points: $y \in \Zz(f_0) \cap \partial \Lambda$ and $x \in \Zz(r(x)\setminus \{f_0\})\cap \partial \Lambda$, and these two sets are disjoint.

If we restrict ourselves to locally convex $k$-graphs, $\tau_1$ and $\tau_2$ do restrict to the same topology on $\partial \Lambda$: certainly, for each $\mu \in \Lambda$, we can realise a cylinder set $\Zz(\mu)$ as a set of the form $\Zz(\mu \setminus G)$ by taking $G = \emptyset$. Now suppose that $x \in \Zz(\mu \setminus G) \cap \partial \Lambda$. We claim that with
\[
    \nu_x := x(0,\big( \bigvee_{\alpha \in G} d(\mu\alpha) \big) \wedge d(x)),
\]
we have
$
x \in \Zz(\nu_x) \cap \partial\Lambda \subset \Zz(\mu \setminus G) \cap \partial\Lambda.
$
Clearly we have $x \in \Zz(\nu_x) \cap \partial\Lambda$. The containment requires a little more work. Clearly $y \in \Zz(\mu)$. Fix $\alpha \in G$. We will show that $y \notin \Zz(\mu\alpha)$. If $d(y) \ngeq d(\mu\alpha)$, then trivially $y \notin \Zz(\mu\alpha)$. Suppose that $d(y) \geq d(\mu\alpha)$. We claim that $d(x) \geq d(\mu\alpha)$: suppose, for a contradiction, that $d(x) \ngeq d(\mu\alpha)$. Then there exists $i \leq k$ such that $d(x)_i < d(\mu\alpha)_i$. Then $d(x)_i = d(\nu_x)_i$. Since $x \in \partial\Lambda$, we must have $x(d(\nu_x))\Lambda^{e_i} \notin x(d(\nu_x))\FE(\Lambda).$ Since $\Lambda$ is locally convex, Lemma~\ref{v lambda ei non empty then exh} implies that
$
y(d(\nu_x))\Lambda^{e_i} = x(d(\nu_x))\Lambda^{e_i} = \emptyset.
$
So $d(y)_i = d(\nu_x)_i = d(x)_i < d(\mu\alpha)_i$, a contradiction. Hence $d(x) \geq d(\mu\alpha)$. This implies that $d(\nu_x) \geq d(\mu\alpha)$. So \[y(0,d(\mu\alpha)) = v_x(0,d(\mu\alpha)) = x(0,d(\mu\alpha)) \neq \mu\alpha.\]
\end{rmk}

\begin{prop}\label{local convexity pi preserves basic open sets}
Suppose that $\Lambda$ is a row-finite, locally convex $k$-graph, and let $\mu \in \iota(\Lambda^0)\wt\Lambda$. Then
$
    \pi(\Zz(\mu) \cap \iota(\Lambda^0)\wt\Lambda^\infty) = \Zz(\pi(\mu))\cap\iota(\partial\Lambda).
$
In particular, $\pi$ is open.
\begin{proof}
We first show that $\pi(\Zz(\mu) \cap \iota(\Lambda^0)\wt\Lambda^\infty) \subset \Zz(\pi(\mu))\cap\iota(\partial\Lambda)$. Suppose that $\pi(y) \in \pi(\Zz(\mu \setminus G) \cap \iota(\Lambda^0)\wt\Lambda^\infty)$. Trivially $\pi(y) \in \iota(\partial \Lambda)$. We will show that $\pi(y) \in\Zz(\pi(\mu) \setminus \pi(G))$. Since $y(0, d(\mu)) = \mu$, we have
\[
    \pi(\mu) = \pi(y(0,d(\mu))) = \pi(y)(0,d(\mu)\wedge d(\pi(y))).
\]
 So $\pi(y) \in \Zz(\pi(\mu))$. Furthermore, $d(\pi(\mu)) = d(\mu) \wedge d(\pi(y))$.

Fix $\nu \in G$. We will show that $\pi(y) \notin \Zz(\pi(\mu\nu))$. Since $y \in \Zz(\mu \setminus G)$, we have $y(0,d(\mu\nu)) \neq \mu\nu$. Since $d(y(0,d(\mu\nu))) = d(\mu\nu)$ and $r(y)=r(\mu\nu) \in \iota(\Lambda^0)$, Lemma
~\ref{uniquepathextfromsmallgraph} implies that
\[
    \pi(\mu\nu) \neq \pi(y(0,d(\mu\nu))) = \pi(y)(0,d(\mu\nu)\wedge d(\pi(y))).
\]
So $\pi(\Zz(\mu \setminus G) \cap \iota(\Lambda^0)\wt\Lambda^\infty) \subset \Zz(\pi(\mu) \setminus \pi(G)) \cap \iota(\partial \Lambda).$

Now suppose that $\iota(\omega) \in\Zz(\pi(\mu))\cap\iota(\partial\Lambda)$, and let $y = \pi^{-1}(\iota(\omega)).$ We show that $y \in \Zz(\mu)$. Write $\mu = [z;(0,d(\mu))]$. Then $\pi(\mu) = [z;(0,d(\mu)\wedge d(z))]$ and $y(0,d(\mu)) = [\omega;(0,d(\mu))]$. We claim that $(z;(0,d(\mu))) \sim (\omega;(0,d(\mu)))$. That \eqref{p2} and \eqref{p3} hold follows immediately from their definition. To show that \eqref{p1} is satisfied, we must show that $z(0,d(\mu) \wedge d(z)) = w(0,d(\mu)\wedge d(w))$. Since $\pi(y) \in \Zz(\pi(\mu))$, we have $y \in \Zz(\pi(\mu))$. Then
\[
    [\omega;(0,d(\pi(\mu)))] = y(0,d(\pi(\mu))) = \pi(\mu) = [z;(0,d(\mu)\wedge d(z))].
\]
So $(\omega;(0,d(\pi(\mu)))) \sim (z;(0,d(\mu)\wedge d(z)))$. Then \eqref{p1} implies that
\[
    \omega(0,d(\pi(\mu))) = \omega(0,d(\pi(\mu)) \wedge d(\omega)) = z(0,d(\mu)\wedge d(z)),
\]
and $d(\pi(\mu)) = d(\mu) \wedge d(z)$. We will show $d(\mu) \wedge d(w) = d(\pi(\mu))$. Fix $i \leq k$. We argue the following cases separately:
\begin{enumerate}
\item If $d(\pi(\mu))_i = d(\mu)_i$, we have $d(w) \geq d(\pi(\mu)) = d(\mu)_i$. Hence $(d(\mu) \wedge d(w))_i = d(\mu)_i = d(\pi(\mu))_i$.
\item If $d(\pi(\mu))_i < d(\mu)_i$, it requires a little more work:
\end{enumerate}
Since $d(\mu)_i > d(\pi(\mu))_i = \min\{d(\mu)_i,d(z)_i\}$, we have $d(\pi(\mu))_i = d(z)_i$. Then $z \in \partial\Lambda$ implies that $z(d(\pi(\mu)))\Lambda^{e_i} \notin z(d(\pi(\mu)))\FE(\Lambda)$. By Lemma~\ref{v lambda ei non empty then exh}, we have $z(d(\pi(\mu)))\Lambda^{e_i} = \emptyset$, and hence $\omega(d(\pi(\mu)))\Lambda^{e_i} = \emptyset$. So $d(\omega)_i = d(\pi(\mu))_i < d(\mu)_i$, giving $(d(\mu) \wedge d(\omega))_i = d(\omega)_i = d(\pi(\mu))_i$.
\end{proof}
\end{prop}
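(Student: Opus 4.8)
The plan is to establish the set equality by proving the two inclusions separately, and then to deduce openness from the fact that cylinder sets form a base on the relevant subspace. The forward inclusion $\pi(\Zz(\mu)\cap\iota(\Lambda^0)\wt\Lambda^\infty)\subset\Zz(\pi(\mu))\cap\iota(\partial\Lambda)$ holds for any row-finite $\Lambda$ and is the easy half: given $y$ in the left-hand set, Proposition~\ref{pi into partial lambda}(ii) gives $\pi(y)\in\iota(\partial\Lambda)$, and since $y\in\Zz(\mu)$ means $y(0,d(\mu))=\mu$, Proposition~\ref{picompatible} yields $\pi(y)(0,d(\mu)\wedge p_y)=\pi(y(0,d(\mu)))=\pi(\mu)$ with $d(\pi(\mu))=d(\mu)\wedge p_y$; hence $\pi(y)\in\Zz(\pi(\mu))$. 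Local convexity is not needed here.

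For the reverse inclusion I would start from $\iota(\omega)\in\Zz(\pi(\mu))\cap\iota(\partial\Lambda)$ with $\omega\in\partial\Lambda$, set $y:=\pi^{-1}(\iota(\omega))$ using the bijectivity from Proposition~\ref{pi is a bij}, and aim to show $y\in\Zz(\mu)$, i.e. $y(0,d(\mu))=\mu$. Writing $\mu=[z;(0,d(\mu))]$ (so $\pi(\mu)=[z;(0,d(\mu)\wedge d(z))]$) and, via Lemma~\ref{key to pi inj}, $y(0,d(\mu))=[\omega;(0,d(\mu))]$, the goal reduces to verifying $(z;(0,d(\mu)))\sim(\omega;(0,d(\mu)))$. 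Conditions \eqref{p2} and \eqref{p3} are automatic, so everything rests on \eqref{p1}, namely $z(0,d(\mu)\wedge d(z))=\omega(0,d(\mu)\wedge d(\omega))$. From $\iota(\omega)=\pi(y)\in\Zz(\pi(\mu))$ I obtain $(\omega;(0,d(\pi(\mu))))\sim(z;(0,d(\mu)\wedge d(z)))$, whence $\omega(0,d(\pi(\mu)))=z(0,d(\mu)\wedge d(z))$ and $d(\pi(\mu))=d(\mu)\wedge d(z)$.

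The crux, and the step I expect to be the main obstacle, is the degree identity $d(\mu)\wedge d(\omega)=d(\pi(\mu))$, which is exactly what upgrades agreement on $[0,d(\pi(\mu))]$ to the agreement on $[0,d(\mu)\wedge d(\omega)]$ demanded by \eqref{p1}. I would argue coordinatewise. When $d(\pi(\mu))_i=d(\mu)_i$, the inequality $d(\omega)\geq d(\pi(\mu))$ (coming from $y\in\Zz(\pi(\mu))$ and $d(\omega)=p_y$) gives $(d(\mu)\wedge d(\omega))_i=d(\mu)_i=d(\pi(\mu))_i$. The delicate case is $d(\pi(\mu))_i<d(\mu)_i$: here $d(\pi(\mu))_i=\min\{d(\mu)_i,d(z)_i\}$ forces $d(z)_i=d(\pi(\mu))_i$, so $z$ has stopped in direction $i$, and since $z\in\partial\Lambda$ we get $z(d(\pi(\mu)))\Lambda^{e_i}\notin z(d(\pi(\mu)))\FE(\Lambda)$. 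This is precisely where local convexity, together with row-finiteness, is indispensable: Lemma~\ref{v lambda ei non empty then exh} forces $z(d(\pi(\mu)))\Lambda^{e_i}=\emptyset$, and since $\omega$ agrees with $z$ up to $d(\pi(\mu))$ the same holds for $\omega$, giving $d(\omega)_i=d(\pi(\mu))_i<d(\mu)_i$ and hence $(d(\mu)\wedge d(\omega))_i=d(\pi(\mu))_i$. For a graph that is not locally convex this argument breaks down, which is why the clean cylinder-set description is special to the locally convex setting. With the degree identity in hand, \eqref{p1} follows and $y\in\Zz(\mu)$.

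Finally, for the ``in particular'' clause I would observe that on $\iota(\Lambda^0)\wt\Lambda^\infty$ the cylinder sets $\Zz(\mu)\cap\iota(\Lambda^0)\wt\Lambda^\infty$ (with $\mu\in\iota(\Lambda^0)\wt\Lambda$) form a base: given an infinite path in a basic set $\Zz(\mu\setminus G)$, truncating it at $\bigvee_{\nu\in G}d(\mu\nu)$ produces a cylinder neighborhood contained in $\Zz(\mu\setminus G)$, with range still in $\iota(\Lambda^0)$. Since $\pi$ carries each such basic set onto the open set $\Zz(\pi(\mu))\cap\iota(\partial\Lambda)$, and a map that sends each member of a base to an open set sends every open set (a union of base elements) to an open set, it follows that $\pi$ is open.
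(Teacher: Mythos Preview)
Your proposal is correct and follows essentially the same route as the paper: the forward inclusion via Proposition~\ref{picompatible}, the reverse inclusion by writing $\mu=[z;(0,d(\mu))]$ and $y(0,d(\mu))=[\omega;(0,d(\mu))]$ and reducing \eqref{p1} to the degree identity $d(\mu)\wedge d(\omega)=d(\pi(\mu))$, and the coordinatewise verification of that identity using Lemma~\ref{v lambda ei non empty then exh} at exactly the point where local convexity enters. Your explicit justification of the ``in particular'' clause (that cylinder sets already form a base on $\iota(\Lambda^0)\wt\Lambda^\infty$, so the set equality forces openness) is a welcome addition that the paper leaves implicit.
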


\section{High-Rank Graph $C^*$-algebras}\label{hrg alg}

\begin{defn}
Let $\Lambda$ be a finitely aligned $k$-graph. A \emph{Cuntz-Krieger $\Lambda$-family} in a $C^*$-algebra $B$ is a collection $\{t_\lambda : \lambda \in \Lambda\}$ of partial isometries satisfying
\renewcommand{\theenumi}{CK\arabic{enumi}}
\begin{enumerate}
\item $\{s_v : v \in \Lambda^0\}$ is a set of mutually orthogonal projections;\label{ck1}
\item $s_\mu s_\nu = s_{\mu\nu}$ whenever $s(\mu) = r(\nu)$;\label{ck2}
\item $s_\mu^*s_\nu = \sum_{(\alpha,\beta) \in \Lambda^{\min}(\mu,\nu)} s_\alpha s_\beta^*$ for all $\mu,\nu \in \Lambda$; and\label{ck3}
\item $\prod_{\mu\in E} (s_v - s_\mu s_\mu^*) = 0$ for every $v \in \Lambda^0$ and $E \in v\FE(\Lambda)$.\label{ck4}
\end{enumerate}
\end{defn}

The $C^*$-algebra $C^*(\Lambda)$ of a $k$-graph $\Lambda$ is the universal $C^*$-algebra generated by a Cuntz-Krieger $\Lambda$-family $\{s_\lambda: \lambda \in \Lambda \}$.

\begin{rmk}\label{cindys error}
 The following Theorem appears as \cite[Theorem 2.28]{Farthing2008}. Farthing alerted us to an issue in the proof of the theorem. It contains a claim which is proved in cases, and in the proof of Case 1 of the claim (on page 189), there is an error when $i_0$ is such that $m_{i_0} = d(x)_{i_0}+1$. Then $a_{i_0} = d(x)_{i_0}$, and \cite[Equation (2.13)]{Farthing2008} gives $t_{i_0} \leq d(z)_{i_0}$; not $t_{i_0} \geq d(z)_{i_0}$ as stated.
\end{rmk}

\begin{theorem}\label{c*lambda morita to c*wtlambda}
Let $\Lambda$ be a row-finite $k$-graph. Let $C^*(\Lambda)$ and $C^*(\wt\Lambda)$ be generated by the Cuntz-Krieger families $\{s_\lambda : \lambda \in \Lambda\}$ and $\{t_\lambda : \lambda \in \wt\Lambda\}$. Then the sum $\sum_{v \in \iota(\Lambda^0)}t_v$ converges strictly to a full projection $p \in M(C^*(\wt\Lambda))$ such that $p C^*(\wt\Lambda)p = C^*(\{t_{\iota(\lambda)} : \lambda \in \Lambda\})$, and $s_\lambda \mapsto t_{\iota(\lambda)}$ determines an isomorphism $\varsigma:C^*(\Lambda) \cong pC^*(\wt\Lambda)p$.
\end{theorem}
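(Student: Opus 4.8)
The plan is to establish four things in turn: that $p := \sum_{v\in\iota(\Lambda^0)} t_v$ determines a full projection in $M(C^*(\wt\Lambda))$; that $\{t_{\iota(\lambda)} : \lambda\in\Lambda\}$ is a Cuntz--Krieger $\Lambda$-family, so that the universal property of $C^*(\Lambda)$ yields a homomorphism $\varsigma$ with $\varsigma(s_\lambda)=t_{\iota(\lambda)}$ and range $C^*(\{t_{\iota(\lambda)}\})$; that $\varsigma$ is injective; and finally that $C^*(\{t_{\iota(\lambda)}\})$ is exactly the corner $pC^*(\wt\Lambda)p$. The first three are largely routine given the machinery already assembled, and the last is where the real work lies.

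For the projection, since the $t_v$ ($v\in\iota(\Lambda^0)$) are mutually orthogonal projections by (CK1), the finite partial sums form an increasing net of projections; testing against a spanning element $t_\mu t_\nu^*$ shows that $\big(\sum_{v\in F}t_v\big)t_\mu t_\nu^*$ is eventually constant (equal to $t_\mu t_\nu^*$ once $r(\mu)\in F$, and $0$ in the limit if $r(\mu)\notin\iota(\Lambda^0)$), and symmetrically on the right, so the net converges strictly to a projection $p$ with $p\,t_\mu=t_\mu$ when $r(\mu)\in\iota(\Lambda^0)$ and $p\,t_\mu=0$ otherwise. To see $p$ is full I would check that every vertex projection lies in the ideal it generates: for a vertex $[x;m]\in\wt\Lambda^0$ the path $\mu=[x;(m\wedge d(x),m)]$ has range $[x;m\wedge d(x)]\in\iota(\Lambda^0)$ and source $[x;m]$, so $t_{[x;m]}=t_\mu^*t_\mu=t_\mu^*\,p\,t_\mu$; as the vertex projections generate $C^*(\wt\Lambda)$ as an ideal, $p$ is full. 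That $\{t_{\iota(\lambda)}\}$ is a Cuntz--Krieger family is then quick: (CK1) and (CK2) hold because $\iota$ is an injective functor, (CK3) follows from Remark~\ref{lmin preserved} identifying $\wt\Lambda^{\min}(\iota\mu,\iota\nu)$ with $\iota(\Lambda^{\min}(\mu,\nu))$, and (CK4) follows from Remark~\ref{crucial for FE set} and Proposition~\ref{fe preserved}, which send $E\in v\FE(\Lambda)$ to $\iota(E)\in\iota(v)\FE(\wt\Lambda)$. Injectivity of the resulting $\varsigma$ I would obtain from the gauge-invariant uniqueness theorem: since $\iota$ preserves degree, the gauge action $\gamma$ satisfies $\gamma_z(t_{\iota(\lambda)})=z^{d(\iota\lambda)}t_{\iota\lambda}=z^{d(\lambda)}t_{\iota\lambda}$, so $\varsigma$ intertwines the two gauge actions, and each $t_{\iota(v)}\neq0$.

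The crux is the corner identity $pC^*(\wt\Lambda)p=C^*(\{t_{\iota(\lambda)}\})$. The inclusion $\supseteq$ is clear, since $r(\iota\lambda),s(\iota\lambda)\in\iota(\Lambda^0)$ forces $t_{\iota\lambda}=p\,t_{\iota\lambda}\,p$. For $\subseteq$ I would use that $pC^*(\wt\Lambda)p$ is the closed span of the elements $t_\mu t_\nu^*$ with $s(\mu)=s(\nu)$ and $r(\mu),r(\nu)\in\iota(\Lambda^0)$, and reduce each to the generated algebra. Factoring $\mu=\mu_1\mu_2$ and $\nu=\nu_1\nu_2$ with $\mu_1=\pi(\mu),\nu_1=\pi(\nu)\in\iota(\Lambda)$ and $\mu_2,\nu_2$ the ``head tails'' satisfying $d(\pi(\mu_2))=d(\pi(\nu_2))=0$, there are two cases. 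If $w:=s(\mu)=s(\nu)\in\iota(\Lambda^0)$ the tails are trivial and $\mu,\nu\in\iota(\Lambda)$, so $t_\mu t_\nu^*$ is a product of generators. If $w\notin\iota(\Lambda^0)$, the key observation is that there is a \emph{unique} head path from $\iota(\Lambda^0)$ into $w=[x;m]$: any such tail is forced to have range $[x;m\wedge d(x)]$ and degree $m-m\wedge d(x)$, hence is unique by Lemma~\ref{uniquepathextfromsmallgraph}. Thus $\mu_2=\nu_2=:\mu_w$ and $t_\mu t_\nu^*=t_{\mu_1}\big(t_{\mu_w}t_{\mu_w}^*\big)t_{\nu_1}^*$, reducing everything to showing $t_{\mu_w}t_{\mu_w}^*\in C^*(\{t_{\iota(\lambda)}\})$.

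This last reduction is the main obstacle, and it is where row-finiteness enters decisively. Writing $\mu_w=h_1\cdots h_n$ as a product of edges, I would argue that every head vertex receives no edges from $\iota(\Lambda)$ and, again by Lemma~\ref{uniquepathextfromsmallgraph}, exactly one edge of each colour, so $t_{h_j}t_{h_j}^*=t_{r(h_j)}$ for $j\geq2$; telescoping then collapses $t_{\mu_w}t_{\mu_w}^*$ all the way down to $t_{h_1}t_{h_1}^*$. Finally $r(h_1)=v_w\in\iota(\Lambda^0)$ receives, in the colour $e_i$ of $h_1$, only the single head edge $h_1$ together with the finitely many edges $\iota(u\Lambda^{e_i})$, where $u=\iota^{-1}(v_w)$; since $\wt\Lambda$ is row-finite with no sources (Theorem~\ref{finite aligndness preserved}), the Cuntz--Krieger relations give $t_{v_w}=t_{h_1}t_{h_1}^*+\sum_{g\in u\Lambda^{e_i}}t_{\iota(g)}t_{\iota(g)}^*$, whence $t_{h_1}t_{h_1}^*=t_{v_w}-\sum_{g\in u\Lambda^{e_i}}t_{\iota(g)}t_{\iota(g)}^*\in C^*(\{t_{\iota(\lambda)}\})$. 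Assembling these reductions proves $\subseteq$ and hence the corner identity; combined with injectivity and fullness, this exhibits $\varsigma$ as an isomorphism of $C^*(\Lambda)$ onto the full corner $pC^*(\wt\Lambda)p$. I expect the bookkeeping around which head vertices receive $\iota$-edges (the failure of local convexity, as in Example~\ref{boundary different to leq infty}) to be the most delicate point, since it is precisely what makes $t_{\mu_w}t_{\mu_w}^*$ a proper subprojection of $t_{v_w}$ rather than equal to it.
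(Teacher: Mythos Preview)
The first three pieces of your plan---strict convergence to a full projection $p$, the Cuntz--Krieger relations for $\{t_{\iota(\lambda)}\}$, and injectivity of $\varsigma$---match the paper's route (the paper packages the latter two as Propositions~\ref{restr is a ckfam} and~\ref{subalg iso to cstarlambda}, both quoted from Farthing). Your reduction of the corner identity to showing $t_{\mu_w}t_{\mu_w}^*\in C^*(\{t_{\iota(\lambda)}\})$, via the factorisation $\mu=\pi(\mu)\,\mu_w$ and the observation that the two tails agree, is also exactly what the paper does (there $\mu_w$ is called $\lambda'$, and your uniqueness-of-tail argument is the paper's equation~\eqref{eqn in hrg corner thm}).

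The gap is in your telescoping step. The claim that a head vertex $r(h_j)$ (for $j\geq 2$) receives exactly one edge of each colour is not justified by Lemma~\ref{uniquepathextfromsmallgraph}: that lemma requires the paths to have range in $\iota(\Lambda^0)$, which $r(h_j)$ does not. Prepending $h_1\cdots h_{j-1}$ does not rescue the argument, because a competing edge $h_j'$ at $r(h_j)$ may satisfy $d(\pi(h_1\cdots h_{j-1}h_j'))>0$ (this happens precisely when $h_j'$ records a genuine $\Lambda$-edge at the vertex $x(d(\mu)\wedge d(x))$), so the hypothesis ``same $\pi$'' of the lemma fails. Concretely, writing $r(h_j)=[x;m_j]$, one has $(m_j)_{c(j)}>d(x)_{c(j)}$ only when some earlier $h_l$ already had colour $c(j)$; when $h_j$ is the \emph{first} edge of its colour in the factorisation---unavoidable once $d(\mu_w)$ is nonzero in two or more coordinates---one gets $(m_j)_{c(j)}=d(x)_{c(j)}$, and then $r(h_j)$ can receive several edges of colour $e_{c(j)}$. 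In that situation $t_{h_j}t_{h_j}^*\neq t_{r(h_j)}$, and your telescope collapses $t_{\mu_w}t_{\mu_w}^*$ only down to $t_{h_1}t_{h_1}^*$, which is in general strictly larger. Your final formula for $t_{h_1}t_{h_1}^*$ is correct, but it involves only the single colour $c(1)$, whereas $t_{\mu_w}t_{\mu_w}^*$ genuinely depends on all colours present in $d(\mu_w)$.

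The paper sidesteps edge-by-edge telescoping. It sets
\[
G_\lambda:=\bigcup_{i=1}^k\{\alpha\in s(\pi(\lambda))\iota(\Lambda)^{e_i}:\MCE(\alpha,\lambda')=\emptyset\},
\]
proves the single combinatorial fact that $G_\lambda\cup\{\lambda'\}\in s(\pi(\lambda))\FE(\wt\Lambda)$ (Lemma~\ref{Glambda is FE}), and then one application of \eqref{ck4} together with $\MCE(\alpha,\lambda')=\emptyset$ for $\alpha\in G_\lambda$ yields
\[
t_{\lambda'}t_{\lambda'}^*=\prod_{\alpha\in G_\lambda}\big(t_{s(\pi(\lambda))}-t_\alpha t_\alpha^*\big),
\]
which is manifestly in $C^*(\{t_{\iota(\lambda)}\})$. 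This handles all colours of $\lambda'$ simultaneously and is the step you should substitute for your telescope.
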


Before proving Theorem~\ref{c*lambda morita to c*wtlambda}, we need the following results.

\begin{prop}[{\cite[Theorem 2.26]{Farthing2008}}]\label{restr is a ckfam}
Let $\Lambda$ be a finitely aligned $k$-graph. If $\{t_\lambda:\lambda \in \wt\Lambda\}$ is a Cuntz-Krieger $\wt\Lambda$-family, then $\{t_\lambda: \lambda \in \iota(\Lambda)\}$ is a Cuntz-Krieger $\iota(\Lambda)$-family.
\end{prop}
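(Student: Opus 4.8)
The plan is to verify directly that the restricted family $\{t_\lambda : \lambda \in \iota(\Lambda)\}$ satisfies each of the four Cuntz-Krieger relations (CK1)--(CK4) for the $k$-graph $\iota(\Lambda)$, leaning on the structural results already established for the embedding $\iota$. The guiding observation is that each relation for the subfamily should reduce to the corresponding relation for the ambient $\wt\Lambda$-family, provided the relevant combinatorial data computed inside $\iota(\Lambda)$ (compositions, minimal common extensions, and finite exhaustive sets) agrees with that computed inside $\wt\Lambda$. Once that agreement is in place, there is nothing more to prove, since the partial isometries $\{t_\lambda : \lambda \in \iota(\Lambda)\}$ are literally a subcollection of a known Cuntz-Krieger family.

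The first two relations are essentially free. For (CK1), the set $\{t_v : v \in \iota(\Lambda^0)\}$ is a subset of $\{t_v : v \in \wt\Lambda^0\}$, and so consists of mutually orthogonal projections. For (CK2), if $\mu,\nu \in \iota(\Lambda)$ satisfy $s(\mu) = r(\nu)$, then functoriality of $\iota$ gives $\mu\nu = \iota(\iota^{-1}(\mu)\,\iota^{-1}(\nu)) \in \iota(\Lambda)$, whence $t_\mu t_\nu = t_{\mu\nu}$ holds as an instance of (CK2) for the $\wt\Lambda$-family.

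For (CK3), I would fix $\mu,\nu \in \iota(\Lambda)$ and expand $t_\mu^* t_\nu$ using the relation (CK3) for $\wt\Lambda$, which writes it as a sum over $\wt\Lambda^{\min}(\mu,\nu)$. By Remark~\ref{lmin preserved} we have $\wt\Lambda^{\min}(\mu,\nu) = \iota(\Lambda)^{\min}(\mu,\nu)$, so this sum is exactly the one demanded by (CK3) for the $\iota(\Lambda)$-family. Thus (CK3) transfers without modification.

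The only relation requiring genuine care is (CK4), and this is the main point. Fix $v \in \iota(\Lambda^0)$ and $E \in v\FE(\iota(\Lambda))$. The subtlety is that a finite exhaustive set for the subgraph need not be obviously exhaustive in the larger graph, where additional paths might escape $E$; exhaustiveness is not in general inherited by supergraphs. However, Proposition~\ref{fe preserved} supplies precisely what is needed: $E \in v\FE(\iota(\Lambda))$ forces $E \in v\FE(\wt\Lambda)$. Applying (CK4) for the $\wt\Lambda$-family then yields $\prod_{\mu \in E}(t_v - t_\mu t_\mu^*) = 0$, which is (CK4) for the restricted family. With all four relations established, $\{t_\lambda : \lambda \in \iota(\Lambda)\}$ is a Cuntz-Krieger $\iota(\Lambda)$-family, completing the proof. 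The entire argument thus rests on packaging the embedding-compatibility results of Section~\ref{hrg removing sources}, with Proposition~\ref{fe preserved} carrying the only real weight.
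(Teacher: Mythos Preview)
Your argument is correct and is exactly the natural verification: (CK1)--(CK2) are immediate, (CK3) follows from Remark~\ref{lmin preserved}, and (CK4) follows from Proposition~\ref{fe preserved}. The paper does not supply its own proof of this proposition but simply cites \cite[Theorem~2.26]{Farthing2008}; your write-up is precisely the expected unpacking of that citation using the compatibility results already recorded in Section~\ref{hrg removing sources}.
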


\begin{rmk}\label{iota preserves cstar alg}
Let $\Lambda$ be a finitely aligned $k$-graph. It follows from the universal properties of $C^*(\Lambda)$ and $C^*(\iota(\Lambda))$ that $C^*(\Lambda) \cong C^*(\iota(\Lambda))$.
\end{rmk}

\begin{prop}[{\cite[Theorem 2.27]{Farthing2008}}]\label{subalg iso to cstarlambda}
Let $\Lambda$ be a finitely aligned $k$-graph, and let $\{t_\lambda : \lambda \in \wt\Lambda\}$ be the universal Cuntz-Krieger $\wt\Lambda$-family which generates $C^*(\wt\Lambda)$. Then $C^*(\Lambda)$ is isomorphic to the subalgebra of $C^*(\wt\Lambda)$ generated by $\{t_\lambda : \lambda \in \iota(\Lambda)\}$.
\end{prop}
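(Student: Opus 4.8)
The plan is to obtain the isomorphism through the universal property of $C^*(\iota(\Lambda))$ combined with the gauge-invariant uniqueness theorem for finitely aligned $k$-graphs \cite{RSY2004}. By Remark~\ref{iota preserves cstar alg} we have $C^*(\Lambda) \cong C^*(\iota(\Lambda))$, so it suffices to produce an isomorphism of $C^*(\iota(\Lambda))$ onto the subalgebra $B := C^*(\{t_\lambda : \lambda \in \iota(\Lambda)\})$ of $C^*(\wt\Lambda)$.

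First I would invoke Proposition~\ref{restr is a ckfam}: the restricted family $\{t_\lambda : \lambda \in \iota(\Lambda)\}$ is a Cuntz-Krieger $\iota(\Lambda)$-family. Writing $\{u_\lambda : \lambda \in \iota(\Lambda)\}$ for the canonical generating family of $C^*(\iota(\Lambda))$, the universal property then furnishes a $*$-homomorphism $\phi:C^*(\iota(\Lambda)) \to C^*(\wt\Lambda)$ with $\phi(u_\lambda) = t_\lambda$ for each $\lambda \in \iota(\Lambda)$. By construction the range of $\phi$ is exactly the generated subalgebra $B$, so $\phi$ surjects onto $B$, and everything reduces to injectivity of $\phi$.

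To establish injectivity I would apply the gauge-invariant uniqueness theorem to $\phi$, which requires checking two hypotheses. For equivariance, $C^*(\wt\Lambda)$ carries its canonical gauge action $\gamma:\TT^k \to \Aut(C^*(\wt\Lambda))$ satisfying $\gamma_z(t_\mu) = z^{d(\mu)} t_\mu$; since $\iota$ preserves degrees we have $\gamma_z(t_{\iota(\lambda)}) = z^{d(\lambda)} t_{\iota(\lambda)}$, so $\gamma$ intertwines $\phi$ with the gauge action on $C^*(\iota(\Lambda))$. For nonvanishing on vertices, the universal family generating $C^*(\wt\Lambda)$ has $t_w \neq 0$ for every $w \in \wt\Lambda^0$ (a foundational property of the $C^*$-algebra of a finitely aligned $k$-graph, here applied to $\wt\Lambda$, which is finitely aligned with no sources by Theorem~\ref{finite aligndness preserved}), so in particular $\phi(u_v) = t_{\iota(v)} \neq 0$ for all $v \in \Lambda^0$. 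The gauge-invariant uniqueness theorem of \cite{RSY2004} then forces $\phi$ to be injective, giving $\phi:C^*(\iota(\Lambda)) \cong B$; composing with the isomorphism of Remark~\ref{iota preserves cstar alg} yields $C^*(\Lambda) \cong B$, as claimed.

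The main obstacle is the nonvanishing hypothesis together with the precise invocation of the uniqueness theorem: one must ensure that the version cited applies to an arbitrary finitely aligned $k$-graph (so that no row-finiteness or local convexity is silently assumed) and that the vertex projections $t_{\iota(v)}$ are genuinely nonzero in $C^*(\wt\Lambda)$, which is what rules out the degenerate quotient and makes the surjection $\phi$ an isomorphism. The remaining verifications — that $\gamma$ leaves $B$ invariant and intertwines the two gauge actions — are routine consequences of $\iota$ being degree-preserving.
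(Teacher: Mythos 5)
Your proposal is correct and takes essentially the same route as the paper's proof, which is simply a citation of \cite[Theorem 2.27]{Farthing2008}: there, too, one uses the restricted Cuntz-Krieger family (Proposition~\ref{restr is a ckfam}) and the universal property to obtain a surjection onto $C^*(\{t_\lambda : \lambda \in \iota(\Lambda)\})$, and then deduces injectivity from the gauge-invariant uniqueness theorem of \cite{RSY2004} --- which holds for arbitrary finitely aligned $k$-graphs --- together with the nonvanishing of the vertex projections in $C^*(\wt\Lambda)$. Your flagged points (degree-preservation of $\iota$ giving equivariance, and $t_{\iota(v)} \neq 0$ via \cite[Proposition 2.12]{RSY2004} applied to the finitely aligned graph $\wt\Lambda$) are exactly the hypotheses that need checking, and you check them correctly.
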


\begin{lemma}\label{boundary path choice for paths off iota lambda}
Suppose that $\Lambda$ is a finitely aligned $k$-graph. Let $\lambda \in \wt\Lambda$, and let $\lambda' = \lambda(d(\pi(\lambda)), d(\lambda))$. Suppose that $x \in \partial\Lambda$ satisfies $\iota(r(x)) = r(\lambda')$ and $d(x) \wedge d(\lambda') = 0$. Then $\lambda' = [x; (0, d(\lambda'))].$
\begin{proof}

Write $\lambda = [y;(0,d(\lambda))]$, then $\lambda' = [y;(d(\lambda) \wedge d(y), d(\lambda))].$ We must show that $(y;(d(\lambda)\wedge d(y),d(\lambda)) \sim (x;(0,d(\lambda'))$. That conditions \eqref{p2} and \eqref{p3} hold follows immediately from their definitions. It remains to show that \eqref{p1} is satisfied. Since  $d(x) \wedge d(\lambda') = 0$, it suffices to show that $y(d(\lambda) \wedge d(y)) = x(0).$
We have
\[
   \iota(x(0)) = \iota(r(x)) = r(\lambda') = [y;d(\lambda) \wedge d(y)] = \iota(y(d(\lambda) \wedge d(y))).
\]
Injectivity of $\iota$ then gives $y(d(\lambda) \wedge d(y)) = x(0)$.
\end{proof}
\end{lemma}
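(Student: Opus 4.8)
The plan is to normalize $\lambda$ to range form and then check the three defining conditions of $\sim$ directly, exploiting the hypothesis $d(x)\wedge d(\lambda')=0$ to collapse the path comparison to a vertex comparison.

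First I would write $\lambda=[y;(0,d(\lambda))]$ for some $y\in\partial\Lambda$. Applying the definition \eqref{finite path hrg proj} of $\pi$ gives $\pi(\lambda)=[y;(0,d(\lambda)\wedge d(y))]$, so $d(\pi(\lambda))=d(\lambda)\wedge d(y)$, and reading off the indicated segment yields $\lambda'=[y;(d(\lambda)\wedge d(y),d(\lambda))]$ with $d(\lambda')=d(\lambda)-d(\lambda)\wedge d(y)$. The desired identity $\lambda'=[x;(0,d(\lambda'))]$ is then the assertion $\bigl(y;(d(\lambda)\wedge d(y),d(\lambda))\bigr)\sim\bigl(x;(0,d(\lambda'))\bigr)$. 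Writing $m=d(\lambda)\wedge d(y)$ and $n=d(\lambda)$ on the $y$-side, conditions \eqref{p2} and \eqref{p3} are immediate: since $m\leq d(y)$ both sides of \eqref{p2} vanish, and \eqref{p3} is just $n-m=d(\lambda')-0$.

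The only substantive point is \eqref{p1}. Here the hypothesis $d(x)\wedge d(\lambda')=0$ does the work: the $x$-side of \eqref{p1} is $x(0,d(\lambda')\wedge d(x))=x(0)=r(x)$, a vertex, while the $y$-side is $y(m,n\wedge d(y))=y(m,m)=y(m)$, also a vertex because $n\wedge d(y)=d(\lambda)\wedge d(y)=m$. Thus \eqref{p1} reduces to the single vertex equation $y(d(\lambda)\wedge d(y))=x(0)$, and no genuine comparison of paths is required.

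To close this, I would compute $r(\lambda')$ in two ways. On one hand, the hypothesis gives $r(\lambda')=\iota(r(x))=\iota(x(0))$. On the other hand, from the representative $\lambda'=[y;(m,d(\lambda))]$ the source/range definition gives $\wt r(\lambda')=[y;m]$, which by Remark~\ref{iota of a segment} applied to the degenerate segment $(m,m)$ (legitimate since $m\leq d(y)$) equals $\iota(y(m))$. Injectivity of $\iota$ from Proposition~\ref{iotamorph2} then forces $x(0)=y(m)$, establishing \eqref{p1}. The main obstacle is purely organizational: putting $\lambda'$ in the correct representative and recognizing that $d(x)\wedge d(\lambda')=0$ is precisely the condition that pushes both sides of \eqref{p1} down to vertices, so the proof succeeds with the vertex identification supplied by $\iota(r(x))=r(\lambda')$ rather than any finer analysis of $x$.
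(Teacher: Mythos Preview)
Your argument is correct and follows the paper's proof essentially verbatim: both normalize $\lambda=[y;(0,d(\lambda))]$, read off $\lambda'=[y;(d(\lambda)\wedge d(y),d(\lambda))]$, dispose of \eqref{p2} and \eqref{p3} immediately, and reduce \eqref{p1} to the vertex identity $y(d(\lambda)\wedge d(y))=x(0)$ via the hypothesis $d(x)\wedge d(\lambda')=0$, then obtain that identity from $\iota(r(x))=r(\lambda')$ and injectivity of $\iota$. You give a bit more detail (explicitly computing $d(\pi(\lambda))$, spelling out why both sides of \eqref{p1} collapse to vertices, and invoking Remark~\ref{iota of a segment} rather than just the definition of $\iota$), but the route is identical.
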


\begin{lemma}\label{Glambda is FE}
Let $\lambda \in \wt\Lambda$. Let $\lambda' = \lambda(d(\pi(\lambda)), d(\lambda))$ and define
\[
    G_\lambda := \bigcup_{i=1}^k \{\alpha \in s(\pi(\lambda))\iota(\Lambda)^{e_i} : \MCE(\alpha,\lambda') = \emptyset \}.
\]
Then $G_\lambda \cup \{\lambda'\} \in s(\pi(\lambda))\FE(\wt\Lambda)$.
\begin{proof}
Fix $\mu \in s(\pi(\lambda))\wt\Lambda$, and suppose that $\MCE(\mu,\alpha) = \emptyset$ for all $\alpha \in G_\lambda$. We will show that $\MCE(\mu,\lambda') \neq \emptyset$. Fix $\nu \in s(\mu)\wt\Lambda^{d(\mu)\vee d(\lambda') - d(\mu)}$. Then $d(\mu\nu) \geq d(\lambda')$. It suffices to show that $\MCE(\mu\nu, \lambda') \neq \emptyset$. Write $\mu\nu = [z ; (0, d(\mu\nu))]$.

We first show that $d(\lambda') \wedge d(\pi(\mu\nu)) = 0$. Suppose for a contradiction that $d(\lambda') \wedge d(\pi(\mu\nu)) >0$. So we have $d(\lambda') \wedge d(\mu\nu) \wedge d(z) > 0$, hence there exists $i \leq k$ such that $d(\lambda')_i, d(\mu\nu)_i$, and $d(z)_i$ are all greater than zero. Then
$
(\mu\nu)(0,e_i) = [z; (0,e_i)] = \iota(z)(0,e_i) \in \iota(\Lambda).
$
Since $\pi|_{\iota(\Lambda)} = \id_{\iota(\Lambda)}$ and $\pi(\lambda') = s(\pi(\lambda)) \neq \lambda'$, we have $\lambda' \notin \iota(\Lambda)$. This implies that $(\mu\nu)(0,e_i) \neq \lambda'(0,e_i)$. So $\MCE((\mu\nu)(0,e_i), \lambda') = \emptyset$, and thus $(\mu\nu)(0,e_i) \in G_\lambda$. But $\MCE(\mu\nu(0,e_i),\mu\nu) \neq \emptyset$, which implies that $\MCE(\mu,\mu\nu(0,e_i)) \neq \emptyset$. This contradicts our supposition that  $\MCE(\mu,\alpha) = \emptyset$ for all $\alpha \in G_\lambda$. So $d(\lambda') \wedge d(\pi(\mu\nu)) = 0$.

Since $d(\mu\nu) \geq d(\lambda')$, we have
\[
d(z) \wedge d(\lambda') = d(z) \wedge d(\mu\nu) \wedge d(\lambda') = d(\pi(\mu\nu)) \wedge d(\lambda') = 0
\]

Since $r(\lambda') = r(\mu\nu) = \iota(r(z))$, it follows from Lemma~\ref{boundary path choice for paths off iota lambda} that $\lambda' = [z;(0,\lambda')]$. Thus $\mu\nu = [z ; (0,\mu\nu)] \in \MCE(\mu\nu, \lambda')$.
\end{proof}
\end{lemma}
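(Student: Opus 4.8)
The plan is to check the two requirements for $G_\lambda \cup \{\lambda'\}$ to lie in $s(\pi(\lambda))\FE(\wt\Lambda)$, writing $v := s(\pi(\lambda))$ throughout. Finiteness is immediate: $G_\lambda$ is contained in $\bigcup_{i=1}^k v\iota(\Lambda)^{e_i}$, and since $\iota(\Lambda) \cong \Lambda$ is row-finite each of these edge sets is finite, so $G_\lambda \cup \{\lambda'\}$ is finite. The real content is exhaustiveness, which I would establish by contraposition: given $\mu \in v\wt\Lambda$ with $\MCE(\mu, \alpha) = \emptyset$ for every $\alpha \in G_\lambda$, I must produce an element of $\MCE(\mu, \lambda')$.

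First I would arrange that $\mu$ is long enough to dominate $\lambda'$. Since $\wt\Lambda$ has no sources (Proposition~\ref{facprop}), choose $\nu \in s(\mu)\wt\Lambda^{\,d(\mu)\vee d(\lambda') - d(\mu)}$, so that $d(\mu\nu) \geq d(\lambda')$. Because any common extension of $\mu\nu$ and $\lambda'$ is also a common extension of $\mu$ and $\lambda'$, it suffices to prove $\MCE(\mu\nu, \lambda') \neq \emptyset$. I then write $\mu\nu = [z;(0,d(\mu\nu))]$ for some $z \in \partial\Lambda$, noting $d(\pi(\mu\nu)) = d(\mu\nu) \wedge d(z)$.

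The key step, and the one I expect to be the main obstacle, is to show that $d(\lambda') \wedge d(\pi(\mu\nu)) = 0$; this is precisely where the hypothesis on $G_\lambda$ is consumed. I would argue by contradiction. If the meet were positive in some coordinate $i$, then $d(\mu\nu)_i$ and $d(z)_i$ are both positive, so $(\mu\nu)(0,e_i) = [z;(0,e_i)] = \iota(z(0,e_i)) \in \iota(\Lambda)$. But $\lambda'$ lies off $\iota(\Lambda)$, since $\pi(\lambda') = s(\pi(\lambda)) = v$ collapses it to a vertex; as $d(\lambda')_i > 0$ as well, the initial edges $(\mu\nu)(0,e_i)$ and $\lambda'(0,e_i)$ are distinct edges of degree $e_i$ with common range $v$, whence $\MCE((\mu\nu)(0,e_i), \lambda') = \emptyset$ and $(\mu\nu)(0,e_i) \in G_\lambda$. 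Since $(\mu\nu)(0,e_i)$ is an initial segment of $\mu\nu$, the path $\mu\nu$ is a common extension of $\mu$ and $(\mu\nu)(0,e_i)$, contradicting $\MCE(\mu, (\mu\nu)(0,e_i)) = \emptyset$. Getting this coordinate-wise argument exactly right --- keeping track of when a segment lands in $\iota(\Lambda)$ versus when $\lambda'$ escapes it --- is the delicate part.

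With $d(\lambda') \wedge d(\pi(\mu\nu)) = 0$ in hand, the remainder is routine. Using $d(\mu\nu) \geq d(\lambda')$ I compute $d(z) \wedge d(\lambda') = d(z) \wedge d(\mu\nu) \wedge d(\lambda') = d(\pi(\mu\nu)) \wedge d(\lambda') = 0$. Since $r(\lambda') = r(\mu\nu) = \iota(r(z))$ and $d(z) \wedge d(\lambda') = 0$, Lemma~\ref{boundary path choice for paths off iota lambda} gives $\lambda' = [z;(0,d(\lambda'))]$. Then $\mu\nu = [z;(0,d(\mu\nu))]$ with $d(\mu\nu) \geq d(\lambda')$ extends $\lambda'$ and has degree $d(\mu\nu) = d(\mu\nu) \vee d(\lambda')$, so $\mu\nu \in \MCE(\mu\nu, \lambda')$. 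This gives $\MCE(\mu, \lambda') \neq \emptyset$ and finishes the proof of exhaustiveness.
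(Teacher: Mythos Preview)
Your argument is correct and follows the paper's proof essentially line for line: the same extension of $\mu$ by $\nu$, the same contradiction establishing $d(\lambda')\wedge d(\pi(\mu\nu))=0$, and the same appeal to Lemma~\ref{boundary path choice for paths off iota lambda} to conclude $\mu\nu\in\MCE(\mu\nu,\lambda')$. The only addition is your explicit finiteness check for $G_\lambda$ via row-finiteness, which the paper omits (and which, strictly speaking, needs the row-finite hypothesis of Theorem~\ref{c*lambda morita to c*wtlambda} rather than merely finite alignment).
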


\begin{proof}[Proof of Theorem~\ref{c*lambda morita to c*wtlambda}]
Let $A:= C^*(\{t_\lambda: \lambda \in \iota(\Lambda)\})$. Then $A \cong C^*(\Lambda)$ by Proposition~\ref{subalg iso to cstarlambda}. We will show that $A$ is a full corner of $C^*(\wt\Lambda)$.

Following the argument of \cite[Lemma 2.10]{Raeburn2005}, the sum $\sum_{v \in \iota(\Lambda^0)} t_v$ converges strictly in $M(C^*(\wt\Lambda))$ to a projection $p$ satisfying
\begin{equation}\label{proj in multi}
p t_\lambda t_\mu^* p = \begin{cases} t_\lambda t_\mu^* & \text{if } \wt r(\lambda), \wt r(\mu) \in \iota(\Lambda^0); \\ 0 & \text{otherwise.}\end{cases}
\end{equation}
The standard argument shows that $p$ is a full projection in $M(C^*(\wt\Lambda))$. It follows from \eqref{proj in multi} that $A \subset p C^*(\wt\Lambda) p$. Now suppose that $\lambda,\mu \in \iota(\Lambda^0)\wt\Lambda$. We will show that $pt_\lambda t_\mu^* p \in A$. If $\wt s(\lambda) \neq \wt s(\mu)$, then \eqref{ck1} implies that $pt_\lambda t_\mu^* p = 0 \in A$. Suppose that $\wt s(\lambda) = \wt s(\mu)$. We first show that \begin{equation}\label{eqn in hrg corner thm}
\lambda(d(\pi(\lambda)), d(\lambda)) = \mu(d(\pi(\mu)), d(\mu)).
\end{equation}
Let $x,y \in \partial\Lambda$ such that $\lambda = [x;(0,d(\lambda))]$ and $\mu = [y; (0,d(\mu))]$. Let
\begin{align*}
\lambda' &= \lambda(d(\pi(\lambda)), d(\lambda)) = [x;(d(\lambda) \wedge d(x), d(\lambda)] \qquad{\text{ and}}\\
\mu' &= \mu(d(\pi(\mu)), d(\mu)) = [y;(d(\mu) \wedge d(y), d(\mu))].
\end{align*}
We claim that $\lambda' = \mu'$. Condition~\eqref{p2} is trivially satisfied, and \eqref{p1} and \eqref{p3} follow from the vertex equivalence $[x;d(\lambda)] = \wt s(\lambda) = \wt s(\mu) = [y;d(\mu)]$. Hence $\lambda'= \mu'$.

\setcounter{theorem}{3}
\begin{claim}\label{claim for corner thm}
Let $G_\lambda := \bigcup_{i=1}^k \{\alpha \in s(\pi(\lambda))\iota(\Lambda)^{e_i} : \MCE(\alpha,\lambda') = \emptyset \}.$ Then
\[t_{\lambda'}t_{\lambda'}^* = \prod_{\alpha \in G_\lambda} \big(t_{s(\pi(\lambda))} - t_\alpha t_\alpha^*\big)\]
\begin{proof}
Lemma~\ref{Glambda is FE} implies that $G_\lambda \cup \{\lambda'\}$ is finite exhaustive, so \eqref{ck4} implies that
\[
    \prod_{\beta \in G_\lambda \cup \{\lambda'\}} \big( t_{s(\pi(\lambda))} - t_\beta t_\beta^* \big) =0.
\]

Furthermore,
\begin{align*}
\prod_{\beta \in G_\lambda \cup \{\lambda'\}} &\big( t_{s(\pi(\lambda))} - t_\beta t_\beta^* \big) = \Big( \prod_{\alpha \in G_\lambda} \big( t_{s(\pi(\lambda))} - t_\alpha t_\alpha^* \big) \Big) (t_{s(\pi(\lambda))} - t_{\lambda'} t_{\lambda'}^*) \\
&= \Big(\prod_{\alpha \in G_\lambda} ( t_{s(\pi(\lambda))} - t_\alpha t_\alpha^* )\Big) - \Big(t_{\lambda'} t_{\lambda'}^* \prod_{\alpha \in G_\lambda} ( t_{s(\pi(\lambda))} - t_\alpha t_\alpha^* )\Big).
\end{align*}

Fix $\alpha \in G_\lambda$. By \cite[Lemma 2.7(i)]{RSY2004},
\[
t_{\lambda'} t_{\lambda'}^* ( t_{s(\pi(\lambda))} - t_\alpha t_\alpha^*) = t_{\lambda'} t_{\lambda'}^* - \sum_{\gamma \in \MCE(\lambda',\alpha)} t_\gamma t_\gamma^* = t_{\lambda'} t_{\lambda'}^*.
\]
So
\[
0 = \prod_{\beta \in G_\lambda \cup \{\lambda'\}} \big( t_{s(\pi(\lambda))} - t_\beta t_\beta^* \big) = \prod_{\alpha \in G_\lambda} \big( t_{s(\pi(\lambda))} - t_\alpha t_\alpha^* \big) - t_{\lambda'} t_{\lambda'}^*. \pc\qedhere
\]
\end{proof}
\end{claim}

Now we put the pieces together:
\begin{align*}
p t_\lambda t_\mu^* p &= t_\lambda t_\mu^*\\
&= t_{\pi(\lambda)} t_{\lambda'} t_{\lambda'}^* t_{\pi(\mu)}^* \qquad\text{by \eqref{eqn in hrg corner thm}}\\
&= t_{\pi(\lambda)} \prod_{\alpha \in G_\lambda} \big(t_{s(\pi(\lambda))} - t_\alpha t_\alpha^*\big) t_{\pi(\mu)}^* \qquad\text{by Claim~\ref{claim for corner thm}.}
\end{align*}
which is an element of $A$ since $\pi(\lambda),\pi(\mu),\alpha \in \iota(\Lambda)$ for all $\alpha \in G_\lambda$. So $A = p C^*(\wt\Lambda) p$.
%
\end{proof}

\section{The Diagonal and the Spectrum}\label{hrg diag}
For $k$-graph $\Lambda$, we call $C^*\{s_\mu s_\mu^* : \mu \in \Lambda\} \subset C^*(\Lambda)$ the \emph{diagonal} $C^*$-algebra of $\Lambda$ and denote it $D_\Lambda$, dropping the subscript when confusion is unlikely. For a commutative $C^*$-algebra $A$, denote by $\Delta(A)$ the spectrum of $A$. Given a homomorphism $\pi:A\to B$ of commutative $C^*$-algebras, define by $\pi^*$ the induced map from $\Delta(B)$ to $\Delta(A)$ such that $\pi^*(f)(y) = f(\pi(y))$ for all $f \in \Delta(B)$ and $y \in A$.

\begin{theorem}\label{mainthm1hrgraph}
 Let $\Lambda$ be a row-finite higher-rank graph. Let $p\in M(C^*(\wt\Lambda))$ and $\varsigma:C^*(\Lambda) \cong pC^*(\wt\Lambda)p$ be from Theorem~\ref{c*lambda morita to c*wtlambda}. Then the restriction $\varsigma|_{D_\Lambda} =:\rho$ is an isomorphism of $D_\Lambda$ onto $pD_{\wt\Lambda}p$. Let $\pi: \iota(\Lambda^0) \wt\Lambda^\infty \to \iota(\partial \Lambda)$ be the homeomorphism from Theorem~\ref{hrg path spaces homeo}, then there exist homeomorphisms $h_\Lambda:\partial \Lambda\to \Delta(D_\Lambda)$ and $\eta:\iota(\Lambda^0) \wt\Lambda^\infty \to \Delta(p D_{\wt\Lambda} p)$ such that the following diagram commutes.

\begin{center}
\begin{tikzpicture}[>=stealth,scale=0.7]
    \node (e0f) at (0,2) {$\iota(\Lambda^0) \wt\Lambda^\infty$};
    \node (deltapdp) at (0,0) {$\Delta(pD_{\wt\Lambda}p)$}
        edge[<-] node[auto] {$\eta$} (e0f);
    \node (partiale) at (3,2) {$\iota(\partial \Lambda)$}
        edge[<-] node[auto,swap] {$\pi$} (e0f);
    \node (deltad) at (3,0) {$\Delta(D_\Lambda)$}
        edge[<-] node[auto,swap] {$h_\Lambda \circ \iota^{-1}$} (partiale)
        edge[<-] node[auto] {$\rho^*$} (deltapdp);
\end{tikzpicture}
\end{center}
\end{theorem}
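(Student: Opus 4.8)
The plan is to establish the three assertions in order: that $\rho$ is an isomorphism of $D_\Lambda$ onto $pD_{\wt\Lambda}p$, that the vertical maps $h_\Lambda$ and $\eta$ are homeomorphisms, and that the square commutes. Since $\varsigma$ is already a $*$-isomorphism onto $pC^*(\wt\Lambda)p$, its restriction $\rho=\varsigma|_{D_\Lambda}$ is automatically an injective homomorphism, so I would only need to pin down its range. The containment $\rho(D_\Lambda)\subseteq pD_{\wt\Lambda}p$ is immediate: $\rho(s_\mu s_\mu^*)=t_{\iota(\mu)}t_{\iota(\mu)}^*$ lies in $D_{\wt\Lambda}$ and, having range in $\iota(\Lambda^0)$, is fixed under compression by $p$ via \eqref{proj in multi}. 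For the reverse containment I would use that $\lsp\{t_\lambda t_\lambda^*:\lambda\in\wt\Lambda\}$ is dense in $D_{\wt\Lambda}$, so $pD_{\wt\Lambda}p$ is generated by the elements $pt_\lambda t_\lambda^* p$; by \eqref{proj in multi} these vanish unless $\wt r(\lambda)\in\iota(\Lambda^0)$, and for $\lambda\in\iota(\Lambda^0)\wt\Lambda$ the computation in the proof of Theorem~\ref{c*lambda morita to c*wtlambda} (through Claim~\ref{claim for corner thm}) rewrites $t_\lambda t_\lambda^*=t_{\pi(\lambda)}\prod_{\alpha\in G_\lambda}(t_{s(\pi(\lambda))}-t_\alpha t_\alpha^*)t_{\pi(\lambda)}^*$. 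As $\pi(\lambda)$ and every $\alpha$ lie in $\iota(\Lambda)$, this expands into a finite combination of range projections $t_\delta t_\delta^*$ with $\delta\in\iota(\Lambda)$ and so belongs to $\rho(D_\Lambda)$; hence $pD_{\wt\Lambda}p=\rho(D_\Lambda)$.

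Next I would identify each diagonal with the continuous functions vanishing at infinity on its path space. For a row-finite (hence finitely aligned) $k$-graph $\Gamma$ I would define $h_\Gamma:\partial\Gamma\to\Delta(D_\Gamma)$ by sending a boundary path $x$ to the character determined on generators by $h_\Gamma(x)(s_\mu s_\mu^*)=1$ if $x\in\Zz(\mu)$ and $0$ otherwise; this is the functional $\alpha(x)$ of Section~\ref{hrg top} read on $D_\Gamma$. One checks $h_\Gamma(x)$ respects the relations using \eqref{ck3} for products $s_\mu s_\mu^* s_\nu s_\nu^*=\sum_{\gamma\in\MCE(\mu,\nu)}s_\gamma s_\gamma^*$ and \eqref{ck4} for finite exhaustive sets. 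Injectivity is clear since distinct boundary paths are separated by a cylinder set, and surjectivity is obtained by manufacturing a boundary path from a given character exactly as $\omega$ was built in Claim~\ref{claim: there exists omega such that}, with \eqref{ck4} forcing the resulting path into $\partial\Gamma$. That $h_\Gamma$ is then a homeomorphism amounts to the identification $D_\Gamma\cong C_0(\partial\Gamma)$ under $s_\mu s_\mu^*\mapsto 1_{\Zz(\mu)\cap\partial\Gamma}$, the sets $\Zz(\mu\setminus G)$ of Theorem~\ref{hrg topology} furnishing a basis of compact open sets on which a Stone--Weierstrass argument gives density. Taking $\Gamma=\Lambda$ yields $h_\Lambda$. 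For $\eta$, I would take $\Gamma=\wt\Lambda$ (row-finite with no sources, so $\partial\wt\Lambda=\wt\Lambda^\infty$) to obtain $h_{\wt\Lambda}$, observe that $p=\sum_{v\in\iota(\Lambda^0)}t_v$ is a projection in $M(D_{\wt\Lambda})$, and use that the spectrum of the corner $pD_{\wt\Lambda}p$ is the open set $\{\phi:\phi(p)=1\}$; since $\phi(p)=1$ precisely when the underlying path has range in $\iota(\Lambda^0)$, this open set is $h_{\wt\Lambda}(\iota(\Lambda^0)\wt\Lambda^\infty)$, and restricting $h_{\wt\Lambda}$ defines $\eta$ by $\eta(x)(b)=h_{\wt\Lambda}(x)(b)$ for $b\in pD_{\wt\Lambda}p$.

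I expect this second step to be the crux of the argument: proving that $h_\Gamma$ is a genuine homeomorphism onto the full spectrum --- in particular surjectivity and the matching of the Gelfand and cylinder topologies --- is exactly the identification of $\partial\Gamma$ with $\Delta(D_\Gamma)$ promised in the abstract, and it draws on the compactness machinery of Section~\ref{hrg top} together with the Cuntz--Krieger relations.

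Finally, to see that the square commutes it suffices to evaluate both composites on the generators $s_\mu s_\mu^*$ of $D_\Lambda$. I would fix $x\in\iota(\Lambda^0)\wt\Lambda^\infty$ and let $y\in\partial\Lambda$ satisfy $\pi(x)=\iota(y)$, so $\iota^{-1}(\pi(x))=y$. The composite down-then-left gives $\rho^*(\eta(x))(s_\mu s_\mu^*)=\eta(x)(t_{\iota(\mu)}t_{\iota(\mu)}^*)$, which equals $1$ exactly when $x\in\Zz(\iota(\mu))$, i.e. $x(0,d(\mu))=\iota(\mu)$; the composite right-then-down gives $h_\Lambda(y)(s_\mu s_\mu^*)$, which equals $1$ exactly when $y\in\Zz(\mu)$. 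By Lemma~\ref{key to pi inj} we have $x(0,d(\mu))=[y;(0,d(\mu))]$, and since $p_x=d(y)$ the segment $x(0,d(\mu))$ lies in $\iota(\Lambda)$ if and only if $d(\mu)\leq d(y)$; when $d(\mu)\leq d(y)$, Remark~\ref{iota of a segment} gives $[y;(0,d(\mu))]=\iota(y(0,d(\mu)))$, so injectivity of $\iota$ shows both conditions reduce to $y(0,d(\mu))=\mu$, while both fail when $d(\mu)\nleq d(y)$. Thus the two characters agree on every generator and hence on $D_\Lambda$, giving $\rho^*\circ\eta=(h_\Lambda\circ\iota^{-1})\circ\pi$ and completing the proof.
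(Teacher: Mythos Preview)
Your proposal is correct and follows essentially the same route as the paper: the range of $\rho$ is identified via the corner computation from the proof of Theorem~\ref{c*lambda morita to c*wtlambda}, the homeomorphism $h_\Gamma$ you sketch is precisely the content of Proposition~\ref{hrg_boundary homeo to spectrum of diagonal}, and $\eta$ is obtained by restricting $h_{\wt\Lambda}$ to the open subset of $\Delta(D_{\wt\Lambda})$ corresponding to the ideal $pD_{\wt\Lambda}p$. Commutativity of the square is then checked on the generators $s_\mu s_\mu^*$ using Lemma~\ref{key to pi inj} and Remark~\ref{iota of a segment}, exactly as in your final paragraph.
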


As in \cite{RS2005}, for a finite subset $F \subset \Lambda$, define \[\vee F := \bigcup_{G \subset F} \MCE(G) = \bigcup_{G \subset F}\big\{\lambda \in \bigcap_{\mu\in G} \mu\Lambda : d(\lambda) = \bigvee_{\mu \in G}d(\mu)\big\}.\]

\begin{lemma}\label{hrg_L_snusnu*=sumqnu}
Let $\Lambda$ be a finitely aligned k-graph and let $F$ be a finite subset of $\Lambda$. Suppose that $r(\lambda) \in F$ for each $\lambda \in F$. For $\mu \in F$, define
\[
    q_\mu^{\vee F}:= s_\mu s_\mu^* \prod_{\mu \mu' \in \vee F \setminus \{\mu\}}(s_\mu s_\mu^* - s_{\mu\mu'} s_{\mu\mu'}^*).
\]
Then the $q_\mu^{\vee F}$ are mutually orthogonal projections in $\lsp \{s_\mu s_\mu^* : \mu \in \vee F\}$, and for each $\nu \in \vee F$
\begin{equation}\label{hrg eq_snusnu*=sumqnu}
s_\nu s_\nu^* = \sum_{\nu\nu' \in \vee F} q_{\nu\nu'}^{\vee F}
\end{equation}
\begin{proof}
Since
\[
s_\mu s_\mu^* \prod_{\mu \mu' \in \vee F \setminus \{\mu\}}(s_\mu s_\mu^* - s_{\mu\mu'} s_{\mu\mu'}^*) = s_\mu s_\mu^* \prod_{\mu \mu' \in \vee F, d(\mu') \neq 0}(s_{r(\mu)} - s_{\mu\mu'} s_{\mu\mu'}^*) ,
\]
\cite[Proposition 8.6]{RS2005} says precisely that the $q_\mu^{\vee F}$ are mutually orthogonal projections. That
\[
s_\nu s_\nu^* = \sum_{\nu\nu' \in \vee F} q_{\nu\nu'}^{\vee F}
\]
is established in the proof of \cite[Proposition 8.6]{RS2005} on page $421$.
\end{proof}
\end{lemma}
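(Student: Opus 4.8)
The plan is to work inside the finite-dimensional commutative $*$-subalgebra $B := \lsp\{s_\mu s_\mu^* : \mu \in \vee F\}$ of $D_\Lambda$ and to exploit the multiplicative structure of the range projections. Writing $p_\mu := s_\mu s_\mu^*$, the basic tool is the standard consequence of (CK2)--(CK3) that $p_\mu p_\nu = \sum_{\lambda \in \MCE(\mu,\nu)} p_\lambda$ for all $\mu,\nu$. Since $\vee F$ is closed under minimal common extensions --- if $\mu \in \MCE(G_1)$ and $\nu \in \MCE(G_2)$ with $G_1,G_2 \subset F$ and $\lambda \in \MCE(\mu,\nu)$, then $\lambda \in \MCE(G_1 \cup G_2) \subset \vee F$ --- this product formula keeps us inside $B$, and finite alignment makes $\vee F$, hence $B$, finite-dimensional. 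Two further observations set the stage: whenever $\tau = \mu\mu'$ extends $\mu$ we have $p_\tau \leq p_\mu$, so each factor $p_\mu - p_{\mu\mu'}$ is a projection dominated by $p_\mu$ and $q_\mu^{\vee F} = p_\mu\prod_{\mu\mu' \in \vee F\setminus\{\mu\}}(p_\mu - p_{\mu\mu'})$ is a product of commuting projections, hence a projection; and since $p_\mu p_{r(\mu)} = p_\mu$, this equals the form $p_\mu\prod_{\mu\mu'\in\vee F,\,d(\mu')\neq0}(s_{r(\mu)} - p_{\mu\mu'})$ appearing in the statement.

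For mutual orthogonality, fix distinct $\mu,\nu \in \vee F$. If $\MCE(\mu,\nu) = \emptyset$ then $p_\mu p_\nu = 0$ and $q_\mu^{\vee F}q_\nu^{\vee F} \leq p_\mu p_\nu = 0$. Otherwise I would observe that for each $\lambda \in \MCE(\mu,\nu)$ we cannot have both $d(\lambda) = d(\mu)$ and $d(\lambda) = d(\nu)$, since $d(\lambda) = d(\mu)\vee d(\nu)$ would then force $\lambda = \mu = \nu$; hence $\lambda$ is a proper extension of $\mu$ or of $\nu$, so $\lambda \in \vee F \setminus \{\mu\}$ or $\lambda \in \vee F\setminus\{\nu\}$. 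In the first case the factor $p_\mu - p_\lambda$ of $q_\mu^{\vee F}$ annihilates $p_\lambda$, giving $q_\mu^{\vee F}p_\lambda = 0$; in the second, $q_\nu^{\vee F}p_\lambda = 0$. As $B$ is commutative and $q_\mu^{\vee F}q_\nu^{\vee F} \leq p_\mu p_\nu = \sum_{\lambda}p_\lambda$, multiplying through gives $q_\mu^{\vee F}q_\nu^{\vee F} = \sum_\lambda q_\mu^{\vee F}q_\nu^{\vee F}p_\lambda = 0$.

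For the sum formula I would first note, using $p_\tau \leq p_\mu$, that inside the corner $p_\nu B p_\nu$ the projection $p_\nu - q_\nu^{\vee F}$ is precisely the supremum of $\{p_\tau : \tau \in \vee F,\ \tau \text{ a proper extension of }\nu\}$. I would then prove the following general statement by induction on $|T|$: for any subset $T \subset \vee F$ closed under extension within $\vee F$, the (mutually orthogonal) projections $q_\rho^{\vee F}$, $\rho \in T$, sum to the supremum of $\{p_\tau : \tau \in T\}$. Removing a minimal element $\tau_0$ of $T$ and setting $Q := \sum_{\rho \in T\setminus\{\tau_0\}}q_\rho^{\vee F}$, the inductive hypothesis identifies $Q$ with the supremum of $\{p_\tau : \tau \in T\setminus\{\tau_0\}\}$, and the step reduces to the identity $q_{\tau_0}^{\vee F} = p_{\tau_0} - p_{\tau_0}Q$, equivalently $p_{\tau_0}Q$ equals the supremum of $\{p_\tau : \tau \in T,\ \tau \succ \tau_0\}$. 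Applied with $T$ the set of proper extensions of $\nu$, this yields $p_\nu = q_\nu^{\vee F} + \sum_{\nu\nu'\in\vee F,\,d(\nu')\neq 0} q_{\nu\nu'}^{\vee F} = \sum_{\nu\nu'\in\vee F}q_{\nu\nu'}^{\vee F}$.

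The crux --- and the step I expect to be the main obstacle --- is the identity $p_{\tau_0}Q = \sup\{p_\tau : \tau \succ \tau_0\}$ in the inductive step, since this is exactly where the branching of minimal common extensions must be reconciled with suprema of commuting projections. Here the product formula does the work: $p_{\tau_0}Q$ distributes over the supremum as $\sup_\tau p_{\tau_0}p_\tau = \sup_\tau \sum_{\lambda\in\MCE(\tau_0,\tau)}p_\lambda$, and one checks that each such $\lambda$ is again a proper extension of $\tau_0$ lying in $T$ (by the $\MCE$-closure of $\vee F$ and the extension-closedness of $T$), while each proper extension $\tau$ of $\tau_0$ in $T$ contributes $p_\tau$ itself via $\MCE(\tau_0,\tau) = \{\tau\}$; the two suprema therefore coincide. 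This is precisely the bookkeeping carried out in \cite[Proposition~8.6]{RS2005}, so in the write-up one may instead simply verify that the hypotheses of that proposition hold --- using the rewritten form of $q_\mu^{\vee F}$ above --- and invoke it directly for both conclusions.
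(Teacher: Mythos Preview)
Your proposal is correct and ultimately follows the same route as the paper: rewrite $q_\mu^{\vee F}$ as $s_\mu s_\mu^*\prod_{\mu\mu'\in\vee F,\,d(\mu')\neq 0}(s_{r(\mu)}-s_{\mu\mu'}s_{\mu\mu'}^*)$ and invoke \cite[Proposition~8.6]{RS2005}. The difference is only one of presentation: the paper cites that proposition outright for both conclusions, whereas you unpack its proof --- the mutual-orthogonality argument via $\MCE$ and the inductive sup computation over extension-closed subsets of $\vee F$ --- before noting in your final paragraph that one may simply cite it. Your self-contained argument is sound (the key points being that $\vee F$ is $\MCE$-closed, that $p_\nu - q_\nu^{\vee F}$ equals $\sup\{p_\tau:\tau\in\vee F,\ \tau\succ\nu\}$, and that extension-closure of $T$ together with minimality of $\tau_0$ forces every $\lambda\in\MCE(\tau_0,\tau)$ with $\tau\in T\setminus\{\tau_0\}$ to be a proper extension of $\tau_0$ lying in $T$), so either presentation is fine.
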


\begin{rmk}
We have
\[
    q_\mu^{\vee F} = s_\mu \Big( \prod_{\substack{\mu' \in s(\mu)\Lambda\setminus\{s(\mu)\}\\\mu\mu' \in {\vee F}}}(s_{s(\mu)} - s_{\mu'} s_{\mu'}^*)\Big)s_\mu^*.
\]
This follows from a straightforward induction on $|\vee F|$.
\end{rmk}

The following lemma can be verified through routine calculation. The reader is referred to the author's PhD thesis for details.
\begin{lemma}[{\cite[Lemma~A.0.7]{WebsterPhD}}]\label{lemma about projections nonzero}
Let $A$ be a $C^*$-algebra, let $p$ be a projection in $A$, let $Q$ be a finite set of commuting subprojections of $p$ and let $q_0$ be a nonzero subprojection of $p$. Then $\prod_{q\in Q}(p-q)$ is a projection. If $q_0$ is orthogonal to each $q \in Q$, then $q_0\prod_{q\in Q}(p-q) = q_0$, so in particular, $\prod_{q\in Q}(p-q) \neq 0$.
\end{lemma}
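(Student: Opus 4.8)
The plan is to reduce everything to three elementary observations, carried out in order: each factor $p-q$ is a projection, the factors commute, and a finite product of pairwise commuting projections is a projection; after that the two displayed claims about $q_0$ are immediate computations. For the first step I would use that a subprojection $q\leq p$ satisfies $pq=qp=q$, whence $(p-q)^*=p-q$ and $(p-q)^2 = p-pq-qp+q = p-q$, so every factor is a projection. For commutativity I would note that $p$ commutes with each $q\in Q$ and that the elements of $Q$ commute with one another by hypothesis; expanding $(p-q)(p-q')$ and $(p-q')(p-q)$ then shows both equal $p-q-q'+qq'$, so the family $\{p-q:q\in Q\}$ is pairwise commuting. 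Since for two commuting projections $e,f$ one has $(ef)^*=fe=ef$ and $(ef)^2=e^2f^2=ef$, an induction on $|Q|$ gives that $\prod_{q\in Q}(p-q)$ is a projection, which is the first assertion. I would also record the convention $\prod_{q\in\emptyset}(p-q)=p$, the identity of the corner $pAp$, so the statement is unambiguous when $Q$ is empty.

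For the second assertion the key point is that orthogonality of projections is symmetric: if $q_0q=0$ then $(qq_0)^*=q_0q=0$, so $qq_0=0$ as well. Combining this with $q_0\leq p$ (so $q_0p=q_0$) yields $q_0(p-q)=q_0p-q_0q=q_0$ for every $q\in Q$. Fixing any enumeration $Q=\{q_1,\dots,q_n\}$ I would then absorb the factors one at a time from the left, using $q_0(p-q_1)=q_0$ to strip off the first factor, then $q_0(p-q_2)=q_0$ for the next, and so on, arriving at $q_0\prod_{q\in Q}(p-q)=q_0$. The ``in particular'' clause is then automatic: since $q_0\neq 0$, the product cannot vanish, for otherwise $q_0=q_0\cdot 0=0$.

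There is no substantive obstacle here — the whole argument rests only on the idempotent and self-adjoint relations — so the work is entirely bookkeeping. The two points that merit care are (a) verifying that commutativity of the elements of $Q$ genuinely passes to the factors $p-q$, which is what makes the notation $\prod_{q\in Q}(p-q)$ order-independent and forces the product to be a projection, and (b) stating the empty-product convention explicitly, since $A$ need not be unital and the factors naturally live in the corner $pAp$.
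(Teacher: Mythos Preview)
Your argument is correct and is exactly the routine verification one would expect; the paper itself omits the proof entirely, calling it ``routine calculation'' and referring to the author's thesis, so there is nothing further to compare.
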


\begin{prop}\label{hrg_boundary homeo to spectrum of diagonal}
Let $\Lambda$ be a finitely aligned $k$-graph. Then $D = \clsp\{s_\mu s_\mu^* : \mu \in \Lambda\}$, and for each $x \in \partial \Lambda$ there exists a unique $h(x) \in \Delta(D)$ such that \[
h(x)(s_\mu s_\mu^*) =
    \begin{cases}
        1 &\text{if $x = \mu\mu'$}\\
        0 &\text{otherwise}.
    \end{cases}
\]
Moreover, $x \mapsto h(x)$ is a homeomorphism $h:\partial \Lambda \to \Delta(D)$.
\begin{proof}
Let $\mu,\nu \in \Lambda$. It follows from \eqref{ck3} that
\[
    (s_\mu s_\mu^*)(s_\nu s_\nu^*) = \sum_{\lambda \in \MCE(\mu,\nu)} s_\lambda s_\lambda^*,
\]
hence $D = \clsp \{s_\mu s_\mu^* : \mu \in \Lambda\}$.

Fix $x\in\partial \Lambda$ and $\sum_{\mu \in F}b_\mu s_\mu s_\mu^* \in \lsp\{s_\mu s_\mu^* : \mu \in \Lambda\}$. By setting extra coefficients to zero we can assume that each path in $F$ has its range in $F$, and write
\[
    \sum_{\mu \in F}b_\mu s_\mu s_\mu^* = \sum_{\mu \in \vee F}b_\mu s_\mu s_\mu^*.
\]
Let $n = \bigvee\{p \in \NN^k : x(0,p) \in \vee F\}$. Since $\vee F$ is a finite set of finite paths, $n$ is finite. Since $\vee F$ is closed under minimal common extensions, $x(0,n) \in \vee F$. Furthermore, since $x \in \partial\Lambda$, we have
\[
F_x := \{\mu' \in x(n)\Lambda\setminus\{x(n)\} : x(0,n)\mu' \in \vee F\} \notin x(n)\FE(\Lambda).
 \]
So there exists $\nu \in x(n)\Lambda$ such that for each $\mu' \in F_x$, $\MCE(\nu,\mu') = \emptyset$. Then $s_\nu s_\nu^* s_{\mu'} s_{\mu'}^* = 0$ for all $\mu' \in F_x$. Applying Lemma~\ref{lemma about projections nonzero} with $p = s_{x(n)}$, $q_0 = s_\nu s_\nu^*$ and $Q = \{s_{\mu'}s_{\mu'}^* : \mu' \in F_x\}$, we have $\prod_{\mu'\in F_x}(s_{x(n)} - s_{\mu'}s_{\mu'}^*) \neq 0$. So
\[
q_{x(0,n)}^F = s_{x(0,n)} \prod_{\mu'\in F_x}(s_{x(n)} - s_{\mu'}s_{\mu'}^*) s_{x(0,n)}^*  \neq 0.
 \]
We have
\begin{align*}
\Big\| \sum_{\nu \in \vee F} b_\mu s_\mu s_\mu^* \Big\| &= \Big\| \sum_{\nu \in \vee F} \Big(\sum_{\substack{\mu \in \vee F\\\nu \in \Zz(\mu)}} b_\mu\Big) q^{\vee F}_\nu \Big\| \qquad\text{by~\eqref{hrg eq_snusnu*=sumqnu}}\\
&= \max_{\{\nu \in \vee F: q^{\vee F}_\nu \neq 0\}} \Big|\sum_{\substack{\mu \in \vee F\\\nu \in \Zz(\mu)}} b_\mu\Big|\\
&\geq\Big|\sum_{\substack{\mu \in \vee F\\x(0,n) \in \Zz(\mu)}} b_\mu\Big|\qquad\text{since $q_{x(0,n)}^{\vee F} \neq 0$}\\
&= \Big|\sum_{\substack{\mu \in F\\x(0,n) \in \Zz(\mu)}} b_\mu\Big| \qquad\text{since $b_\mu = 0$ for $\mu \in \vee F\setminus F$}.
\end{align*}
Hence the formula
\begin{equation}\label{hrg_want h(x) to be this}
h(x)\Big(\sum_{\mu \in F}b_\mu s_\mu s_\mu^*\Big) = \sum_{\substack{\mu\in F\\x \in \Zz(\mu)}} b_\mu,
\end{equation}
determines a norm-decreasing linear map on $\lsp\{s_\mu s_\mu^* : \mu \in \Lambda\}$.

To see that $h(x)$ is a homomorphism, it suffices to show that
\begin{equation}\label{hrg_h is multiplicative}
 h(x)(s_\mu s_\mu^* s_\alpha s_\alpha^*) = h(x)(s_\mu s_\mu^*) ~h(x)( s_\alpha s_\alpha^*).
\end{equation}
Calculating the right hand side of \eqref{hrg_h is multiplicative} yields
\[
h(x)(s_\mu s_\mu^*) ~h(x)(s_\alpha s_\alpha^*) =
\begin{cases}
  1 &\text{if $x \in \Zz(\mu) \cap \Zz(\alpha)$}\\
  0 &\text{otherwise.}
\end{cases}
\]

Calculating the left hand side of \eqref{hrg_h is multiplicative} gives
\[
    h(x)(s_\mu s_\mu^* s_\alpha s_\alpha^*) = h(x)\Big(\sum_{\lambda \in \MCE(\mu,\alpha)} s_\lambda s_\lambda^*\Big).
\]
There exists at most one $\lambda \in \MCE(\mu,\alpha)$ such that $x \in \Zz(\lambda)$. Such a $\lambda$ exists if and only if $x \in \Zz(\mu) \cap \Zz(\alpha)$, so
\[
    h(x)(s_\mu s_\mu^* s_\alpha s_\alpha^*)=\begin{cases}1 &\text{if $x \in \Zz(\alpha)\cap\Zz(\mu)$}\\ 0 &\text{otherwise.}\end{cases}
\]
Thus we have established \eqref{hrg_h is multiplicative}, hence $h(x)$ is a homomorphism, and thus extends uniquely to a nonzero homomorphism $h(x):D \to \CC$.

We claim the map $h:\partial \Lambda \to \Delta(D)$ is a homeomorphism. The trickiest part is to show $h$ is onto:
\begin{claim}
The map $h$ is surjective.
\begin{proof}
Fix $\phi \in \Delta(D)$. We seek $x \in \partial \Lambda$ such that $h(x) = \phi$. For each $n \in \NN^k$, $\{s_\mu s_\mu^* : d(\mu) =n \}$ are mutually orthogonal projections. It follows that for each $n \in \NN^k$ there exists at most one $\nu^n \in \Lambda^n$ such that $\phi(s_{\nu^n} s_{\nu^n}^*)=1$. Let $S$ denote the set of $n$ for which such $\nu^n$ exist. If $\nu = \mu\nu'$ and $\phi(s_\nu s_\nu^*)=1$, then
\[
    1= \phi(s_\nu s_\nu^*) = \phi(s_\nu s_\nu^* s_\mu s_\mu^*) = \phi(s_\nu s_\nu^*) \phi(s_\mu s_\mu^*) = \phi(s_\mu s_\mu^*).
\]
This implies that if $n \in S$ and $m \leq n$, then $m \in S$ and $\nu^m = \nu^n(0,m)$. Set $N:= \vee S$, and define $x:\Omega_{k,N} \to \Lambda$ by $x(p,q) = \nu^q(p,q)$. Then since each $\nu^q$ is a $k$-graph morphism, so is $x$.

We now show that $x \in \partial\Lambda$. Fix $n \in \NN^k$ such that $n\leq d(x)$, and $E \in x(n)\FE(\Lambda)$. We seek $m\in \NN^k$ such that $x(n,n+m) \in E$. Since $E$ is finite exhaustive, \eqref{ck4} implies that
$
    \prod_{\lambda \in E}(s_{x(n)} - s_\lambda s_\lambda^*) = 0.
$
Multiplying on the left by $s_{x(0,n)}$ and on the right by $s_{x(0,n)}^*$ yields
\[
    \prod_{\lambda \in E}(s_{x(0,n)}s_{x(0,n)}^* - s_{x(0,n)\lambda} s_{x(0,n)\lambda}^*) = 0.
\]
Thus, since $\phi$ is a homomorphism, there exists $\lambda \in E$ such that
\begin{align*}
0 &= \phi(s_{x(0,n)}s_{x(0,n)}^*) - \phi(s_{x(0,n)\lambda} s_{x(0,n)\lambda}^*)\\
&= \phi(s_{\nu^n} s_{\nu^n}^*) - \phi(s_{x(0,n)\lambda} s_{x(0,n)\lambda}^*)\\
&= 1 - \phi(s_{x(0,n)\lambda} s_{x(0,n)\lambda}^*)
\end{align*}
So $\phi(s_{x(0,n)\lambda} s_{x(0,n)\lambda}^*) = 1$. Thus $x(0,n)\lambda = \nu^{n+d(\lambda)} = x(0,n+d(\lambda))$, and hence $x \in \partial\Lambda$.

Now we must show that $h(x) = \phi$. For each $\mu \in \Lambda$ we have
\begin{align*}
    \phi(s_\mu s_\mu^*) = 1 &\iff d(\mu) \in S \text{ and }\nu^{d(\mu)} = \mu \\&\iff x(0,d(\mu))=\mu \\&\iff h(x)(s_\mu s_\mu^*) = 1.
\end{align*}
Since $\phi(s_\mu s_\mu^*)$ and $h(x)(s_\mu s_\mu^*)$ take values in $\{0,1\}$, we have $h(x) = \phi$.
\pc\end{proof}
\end{claim}

To see that $h$ is injective, suppose that $h(x) = h(y)$. Then for each $n \in \NN^k$, we have
\[
h(y)(s_{x(0,n \wedge d(x))} s_{x(0,n \wedge d(x))}^*) = h(x)(s_{x(0,n \wedge d(x))} s_{x(0,n \wedge d(x))}^*) = 1.\]
Hence $y(0,n \wedge d(x)) = x(0,n \wedge d(x))$. By symmetry, we also have $y(0,n \wedge d(y)) = x(0,n \wedge d(y))$ for all $n$. In particular, $d(x) = d(y)$ and $y(0,n) = x(0,n)$ for all $n \leq d(x)$. Thus $x=y$.

Recall that $\Delta(D)$ carries the topology of pointwise convergence. For openness, it suffices to check that $h^{-1}$ is continuous. Suppose that $h(x^n) \to h(x)$. Fix a basic open set $\Zz(\mu)$ containing $x$, so $h(x)(s_\mu s_\mu^*) = 1$. Since $h(x^n)(s_\mu s_\mu^*) \in \{0,1\}$ for all $n$, for large enough $n$, we have $h(x^n)(s_\mu s_\mu^*) = 1$. So $x^n \in \Zz(\mu)$. For continuity, a similarly straightforward argument shows that if $x^n \to x$, then $h(x^n)(s_\mu s_\mu^*) \to h(x)(s_\mu s_\mu^*)$. This convergence extends to $\lsp\{s_\mu s_\mu^* : \mu \in \Lambda\}$ by linearity, and to $D$ by an $\eps/3$ argument.

\end{proof}
\end{prop}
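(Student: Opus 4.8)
The plan is to construct the character $h(x)$ by hand on the dense $*$-subalgebra $\lsp\{s_\mu s_\mu^* : \mu \in \Lambda\}$ and then extend it by continuity. First I would establish $D = \clsp\{s_\mu s_\mu^*\}$ by showing the span is already a $*$-algebra: it is self-adjoint since each $s_\mu s_\mu^*$ is a projection, and it is multiplicatively closed because \eqref{ck3} yields $(s_\mu s_\mu^*)(s_\nu s_\nu^*) = \sum_{\lambda\in\MCE(\mu,\nu)} s_\lambda s_\lambda^*$. Its closure is therefore $D$.

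Next, for a fixed $x\in\partial\Lambda$, I would define $h(x)$ on the span by $h(x)\big(\sum_{\mu\in F} b_\mu s_\mu s_\mu^*\big) = \sum_{\mu\in F,\, x\in\Zz(\mu)} b_\mu$ and prove it is a well-defined, norm-decreasing linear functional; this is the heart of the argument. Replacing $F$ by $\vee F$ (harmless, setting the new coefficients to zero), Lemma~\ref{hrg_L_snusnu*=sumqnu} rewrites the element in terms of the mutually orthogonal projections $q_\nu^{\vee F}$, so its norm equals $\max_{\{\nu : q_\nu^{\vee F}\neq 0\}}\big|\sum_{\mu\in\vee F,\, \nu\in\Zz(\mu)} b_\mu\big|$. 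To read off the value $h(x)$ must take, I need the single projection $q_{x(0,n)}^{\vee F}$ to be nonzero, where $n = \bigvee\{p : x(0,p)\in\vee F\}$. This is exactly where the boundary-path hypothesis enters: the set $F_x = \{\mu'\in x(n)\Lambda\setminus\{x(n)\} : x(0,n)\mu'\in\vee F\}$ fails to be finite exhaustive, so some $\nu\in x(n)\Lambda$ satisfies $\MCE(\nu,\mu')=\emptyset$ for every $\mu'\in F_x$, and Lemma~\ref{lemma about projections nonzero} then forces $q_{x(0,n)}^{\vee F}\neq 0$. The resulting estimate $\|\cdot\| \geq \big|\sum_{x(0,n)\in\Zz(\mu)} b_\mu\big| = \big|\sum_{x\in\Zz(\mu)} b_\mu\big|$ shows $h(x)$ is well-defined and contractive, hence extends to $D$.

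Multiplicativity of $h(x)$ I would verify on spanning elements using the same product formula: both $h(x)(s_\mu s_\mu^* s_\alpha s_\alpha^*)$ and $h(x)(s_\mu s_\mu^*)\,h(x)(s_\alpha s_\alpha^*)$ equal $1$ precisely when $x\in\Zz(\mu)\cap\Zz(\alpha)$, the key observation being that at most one $\lambda\in\MCE(\mu,\alpha)$ can satisfy $x\in\Zz(\lambda)$. This makes $h(x)$ a character, settling existence; uniqueness is immediate, since any character with the prescribed values on range projections is determined on the dense span.

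Finally I would show $h:\partial\Lambda\to\Delta(D)$ is a homeomorphism. Injectivity follows because $h(x)=h(y)$ forces $x(0,n\wedge d(x)) = y(0,n\wedge d(x))$ for all $n$, and by symmetry $d(x)=d(y)$, so $x=y$. For surjectivity, given $\phi\in\Delta(D)$ I would use that $\{s_\mu s_\mu^* : d(\mu)=n\}$ are mutually orthogonal to obtain at most one $\nu^n\in\Lambda^n$ with $\phi(s_{\nu^n}s_{\nu^n}^*)=1$; multiplicativity of $\phi$ makes these segments coherent, defining a morphism $x$, and \eqref{ck4} together with $\phi$ being a nonzero homomorphism forces $x\in\partial\Lambda$ and $h(x)=\phi$. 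Continuity of $h$ and of $h^{-1}$ are then routine pointwise-convergence arguments on the cylinder sets $\Zz(\mu)$, extended to $D$ by linearity and an $\eps/3$ estimate. I expect the boundedness and well-definedness step to be the main obstacle, as it is the only place that uses the full strength of the boundary-path condition via the non-exhaustiveness of $F_x$ and the orthogonal-projection machinery.
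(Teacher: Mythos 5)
Your proposal is correct and follows essentially the same route as the paper's proof: the same reduction to $\vee F$ with the orthogonal projections $q_\nu^{\vee F}$, the same use of the non-exhaustiveness of $F_x$ together with Lemma~\ref{lemma about projections nonzero} to get $q_{x(0,n)}^{\vee F}\neq 0$ for the norm estimate, the same multiplicativity check via uniqueness of $\lambda\in\MCE(\mu,\alpha)$ with $x\in\Zz(\lambda)$, and the same surjectivity argument building $x$ from the coherent family $\nu^n$ and invoking \eqref{ck4}. You have also correctly identified the well-definedness/contractivity step as the only place where the boundary-path condition is fully used, just as in the paper.
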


We can now prove our main result.

\begin{proof}[Proof of Theorem~\ref{mainthm1hrgraph}.]

Let $\Lambda$ be a row-finite $k$-graph, and $\wt\Lambda$ be the desourcification described in Proposition~\ref{facprop}. Let $\{s_\lambda:\lambda \in \Lambda\}$ and $\{t_\lambda: \lambda \in \wt\Lambda\}$ be universal Cuntz-Krieger families in $C^*(\Lambda)$ and $C^*(\wt\Lambda)$. Let $A$ be the $C^*$-subalgebra of $C^*(\wt\Lambda)$ generated by $\{t_\lambda : \lambda \in \iota(\Lambda)\}$, and define the diagonal subalgebra of $A$ by $D_A:= \clsp\{t_\lambda t_\lambda^* : \lambda \in \iota(\Lambda)\}$. Replacing $t_\lambda t_\mu^*$ with $t_\lambda t_\lambda^*$ in the proof Theorem~\ref{c*lambda morita to c*wtlambda} yields $D_A \cong p D_{\wt\Lambda} p$. Since $A \cong C^*(\Lambda)$, it follows that $D_A \cong D_\Lambda$. Thus $D_\Lambda \cong p D_{\wt\Lambda} p$ as required.

We now construct $\eta$ and show that it is a homeomorphism. That $p$ commutes with $D_{\wt\Lambda}$ implies that $p D_{\wt \Lambda} p$ is an ideal in $D_{\wt\Lambda}$. Then \cite[Propositions A26(a) and A27(b)]{RW1998} imply that map $k:\phi \mapsto \phi|_{p D_{\wt \Lambda} p}$ is a homeomorphism of $\{\phi\in\Delta(D_{\wt \Lambda}): \phi|_{p D_{\wt \Lambda} p} \neq 0\}$ onto $\Delta(p D_{\wt \Lambda} p)$.  Since $\wt\Lambda$ is row finite with no sources, $\partial \wt\Lambda = \wt\Lambda^\infty$. Let $h_{\wt\Lambda}:\wt\Lambda^\infty \to \Delta(D_{\wt\Lambda})$ be the homeomorphism obtained from Proposition~\ref{hrg_boundary homeo to spectrum of diagonal}. Then $h_{\wt\Lambda}(x) \in \dom(k)$ for all $x \in \iota(\Lambda^0) \wt\Lambda^\infty$. Define $\eta:=k \circ h_{\wt\Lambda}|_{\iota(\Lambda^0)\wt\Lambda^\infty}: \iota(\Lambda^0) \wt\Lambda^\infty \to \Delta(p D_{\wt\Lambda} p)$.

We now show that $h_\Lambda \circ \iota^{-1} \circ \pi= \rho^*\circ \eta.$ Since $\rho$ is an isomorphism, it suffices to fix $x \in \iota(\Lambda^0)\wt\Lambda^\infty$ and $\mu \in \Lambda$ and show that

\begin{equation}\label{hrg intertwining diagram commutes}
(h_\Lambda \circ \iota^{-1} \circ \pi) (x) (s_\mu s_\mu^*) = (\rho^*\circ \eta)(x) (s_\mu s_\mu^*).
\end{equation}
Let $\omega \in \partial\Lambda$ be such that $\pi(x) = \iota(\omega)$. Then the left-hand side of \eqref{hrg intertwining diagram commutes} becomes
\[
(h_\Lambda \circ \iota^{-1} \circ \pi) (x) (s_\mu s_\mu^*) = h_\Lambda(w)(s_\mu s_\mu^*) =
\begin{cases}
    1 &\text{if } \omega \in \Zz(\mu)\\
    0 &\text{otherwise.}
\end{cases}
\]
Since $r(x) \in \iota(\Lambda^0)$, the right-hand side of \eqref{hrg intertwining diagram commutes} simplifies to
\[
(\rho^*\circ \eta)(x) (s_\mu s_\mu^*) = \eta(x)(\rho(s_\mu s_\mu^*)) = h_{\wt\Lambda}(x)(t_{\iota(\mu)} t_{\iota(\mu)}^*) = \begin{cases}
    1 &\text{if } x \in \Zz(\iota(\mu))\\
    0 &\text{otherwise.}
\end{cases}
\]

We claim that $x \in \Zz(\iota(\mu))$ if and only if $\omega \in \Zz(\mu)$. Suppose that $x \in \Zz(\iota(\mu))$. Since $\mu \in \Lambda$ and $\pi(x) = \iota(\omega)$, we have $\pi(x(0,d(\mu))) = \pi(\iota(\mu)) = \iota(\mu)$. So $d(\pi(x(0,d(\mu)))) = d(\mu)$, and thus $d(x) \wedge d(w) \geq d(\mu)$. So $d(\omega) \geq d(\mu)$. Then we have
\begin{align*}
x \in \Zz(\iota(\mu)) &\iff x(0,d(\mu)) = \iota(\mu) \qquad\text{since $\iota$ preserves degree}\\
&\iff [\omega;(0,d(\mu))] = \iota(\mu) \qquad\text{by Lemma~\ref{key to pi inj}}\\
&\iff \iota(\omega(0,d(\mu))) = \iota(\mu) \qquad\text{by Remark~\ref{iota of a segment}}\\
&\iff \omega(0,d(\mu)) = \mu \qquad\text{since $\iota$ is a injective}\\
&\iff \omega \in \Zz(\mu).
\end{align*}
So equation \eqref{hrg intertwining diagram commutes} holds, and we are done.
\end{proof}


\begin{thebibliography}{15}

\bibitem{BPRS2000} T. Bates, D. Pask, I. Raeburn, and W. Szyma{\'n}ski, \emph{The {$C\sp *$}-algebras of row-finite graphs}, New York J. Math. \textbf{6} (2000), 307--324.

\bibitem{CK1980} J. Cuntz  and W. Krieger, \emph{A class of {$C\sp{\ast} $}-algebras and topological {M}arkov chains}, Invent. Math. \textbf{56} (1980), 251--268.

\bibitem{DT2005} D. Drinen  and M. Tomforde, \emph{The {$C\sp *$}-algebras of arbitrary graphs},
 Rocky Mountain J. Math. \textbf{35} (2005), 105--135.

\bibitem{EW1980} M. Enomoto  and Y. Watatani, \emph{A graph theory for {$C\sp{\ast} $}-algebras}, Math. Japon. \textbf{25} (1980), 435--442.

\bibitem{Farthing2008} C. Farthing, \emph{Removing sources from higher-rank graphs},
 J. Operator Theory \textbf{60} (2008), 165--198.

\bibitem{FMY2005} C. Farthing, P.~S. Muhly, and T. Yeend, \emph{Higher-rank graph {$C\sp *$}-algebras: an inverse semigroup and groupoid approach}, Semigroup Forum \textbf{71} (2005), 159--187.

\bibitem{KP2000} A. Kumjian  and D. Pask, \emph{Higher rank graph {$C\sp \ast$}-algebras}, New York J. Math. \textbf{6} (2000), 1--20.

\bibitem{PRRS2006} D. Pask, I. Raeburn, M. R{\o}rdam, and A. Sims, \emph{Rank-two graphs whose {$C\sp *$}-algebras are direct limits of circle algebras}, J. Funct. Anal. \textbf{239} (2006), 137--178.

\bibitem{PW2005} A.~L.~T. Paterson  and A.~E. Welch, \emph{Tychonoff's theorem for locally compact spaces and an elementary approach to the topology of path spaces}, Proc. Amer. Math. Soc. \textbf{133} (2005), 2761--2770.

\bibitem{Raeburn2005} I. Raeburn, Graph algebras, Published for the Conference Board of the Mathematical Sciences, Washington, DC, 2005, vi+113.

\bibitem{RS2005} I. Raeburn  and A. Sims, \emph{Product systems of graphs and the {T}oeplitz algebras of higher-rank graphs}, J. Operator Theory \textbf{53} (2005), 399--429.

\bibitem{RSY2003} I. Raeburn, A. Sims, and T. Yeend, \emph{Higher-rank graphs and their {$C\sp *$}-algebras}, Proc. Edinb. Math. Soc. (2) \textbf{46} (2003), 99--115.

\bibitem{RSY2004} I. Raeburn, A. Sims, and T. Yeend, \emph{The {$C\sp *$}-algebras of finitely aligned higher-rank graphs}, J. Funct. Anal. \textbf{213} (2004), 206--240.

\bibitem{RW1998} I. Raeburn  and D.~P. Williams, Morita equivalence and continuous-trace {$C\sp *$}-algebras, American Mathematical Society, Providence, RI, 1998, xiv+327.

\bibitem{Renault1980} J. Renault, A groupoid approach to {$C\sp{\ast} $}-algebras, Springer, Berlin, 1980, ii+160.

\bibitem{dave} D. Roberston, \emph{Simplicity of {$C^\ast$}-algebras associated to row-finite locally convex higher-rank graphs}, Honours thesis, University of Newcastle, 2006.

\bibitem{RS2009} D. Robertson  and A. Sims, \emph{Simplicity of {$C^\ast$}-algebras associated to row-finite locally convex higher-rank graphs}, Israel J. Math. \textbf{172} (2009), 171--192.

\bibitem{RS1999a} G. Robertson  and T. Steger, \emph{Affine buildings, tiling systems and higher rank {C}untz-{K}rieger algebras}, J. Reine Angew. Math. \textbf{513} (1999), 115--144.

\bibitem{WebsterPhD} S.~B.~G. Webster, Directed Graphs and $k$-graphs: Topology of the Path Space and How It Manifests in the Associated $C^*$-algebra, PhD Thesis, University of Wollongong, 2010.

\end{thebibliography}
\end{document}